\newtheorem{thm}{Theorem}[section]
\newtheorem{lem}[thm]{Lemma}
\newtheorem{prop}[thm]{Proposition}
\newtheorem{conj}[thm]{Conjecture}
\theoremstyle{definition}
\newtheorem{exam}[thm]{Example}
\newtheorem{cor}[thm]{Corollary}
\theoremstyle{remark}
\newtheorem{rmk}[thm]{Remark}
\newcommand{\p}{\partial}
\numberwithin{equation}{section}
\begin{document}

\title{Static Manifolds with Boundary and Rigidity of Scalar Curvature and Mean Curvature}


\author{Hongyi Sheng}
\address{Department of Mathematics, University of California, San Diego, La Jolla, CA 92093, USA}
\email{hosheng@ucsd.edu}
\thanks{}

\subjclass[2020]{Primary: 53C21, 35Q75, 58J32.}

\date{}

\dedicatory{}

\begin{abstract}
On a compact manifold with boundary, the map consisting of the scalar curvature in the interior and the mean curvature on the boundary is a local surjection at generic metrics \cite{C-V}. Moreover, this result may be localized to compact subdomains in an arbitrary Riemannian manifold with boundary \cite{Sheng}. The non-generic case (also called non-generic domains) corresponds to static manifolds with boundary. We discuss their geometric properties, which also work as the necessary conditions of non-generic metrics. In space forms and the Schwarzschild manifold, we classify simple non-generic domains (with only one boundary component) and show their connection with rigidity theorems and the Schwarzschild photon sphere \cite{Cederbaum, Cederbaum-G, Claudel-V-E}.
\end{abstract}

\maketitle

\section{Introduction}\label{sec1}
Let $(M,g)$ be a complete, connected, smooth Riemannian manifold. A static potential is a non-trivial solution $u\in C^\infty(M)$ to the equation
\begin{equation}\label{static eqn}
    -\left(\Delta u\right) g+\operatorname{Hess}u - u \operatorname{Ric} = 0.
\end{equation}
Riemannian manifolds admitting static potentials are called static manifolds. In fact, static potentials are related to static spacetimes in general relativity. A static spacetime is a four-dimensional Lorentzian manifold which possesses a timelike Killing field and a spacelike hypersurface which is orthogonal to the integral curves of this Killing field. Moreover, we have the following proposition:
\begin{prop}[Corvino \cite{C}]
    Let $(M,g)$ be a Riemannian manifold. Then $u$ is a static potential if and only if the warped product metric $\bar{g}\equiv-u^2 \, dt^2+g$ is Einstein (away from the zeros of $u$).
\end{prop}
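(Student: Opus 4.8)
The plan is to compute the Ricci tensor of the Lorentzian warped product $\bar g=-u^2\,dt^2+g$ on $\R\times M$, restricted to the region $\{u\ne 0\}$ where $\bar g$ is nondegenerate, and to compare it with $\bar g$ directly. First I would introduce adapted coordinates $(x^0,x^1,\dots,x^n)$ with $x^0=t$ and $(x^i)$ local coordinates on $M$, so that $\bar g_{00}=-u^2$, $\bar g_{0i}=0$, $\bar g_{ij}=g_{ij}$, with $u$ independent of $t$. A direct computation of the Christoffel symbols gives $\bar\Gamma^0_{0i}=\p_i\log u$, $\bar\Gamma^i_{00}=u\,\nabla^i u$, $\bar\Gamma^i_{jk}=\Gamma^i_{jk}$ (the symbols of $g$), and $\bar\Gamma^0_{00}=\bar\Gamma^0_{ij}=\bar\Gamma^i_{0j}=0$. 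Feeding these into the formula for the Ricci tensor, and evaluating in $g$-normal coordinates at a point to kill the first-order terms of $g$ (so that the quadratic-in-$\Gamma$ contributions drop out), yields the classical warped-product identities
\begin{equation*}
\overline{\operatorname{Ric}}(\p_t,\p_t)=u\,\Delta u,\qquad \overline{\operatorname{Ric}}(\p_t,X)=0,\qquad \overline{\operatorname{Ric}}(X,Y)=\operatorname{Ric}(X,Y)-\tfrac1u\operatorname{Hess}u(X,Y)
\end{equation*}
for all $X,Y$ tangent to $M$; the vanishing of the mixed term also follows from the invariance of $\bar g$ under $t\mapsto -t$.

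For the implication ``static potential $\Rightarrow$ Einstein'', I would assume $u$ solves \eqref{static eqn}, i.e. $\operatorname{Hess}u=(\Delta u)g+u\operatorname{Ric}$, and substitute into the third identity to get $\overline{\operatorname{Ric}}(X,Y)=-\tfrac{\Delta u}{u}\,g(X,Y)$ in the $M$-directions, while the first identity reads $\overline{\operatorname{Ric}}(\p_t,\p_t)=u\Delta u=-\tfrac{\Delta u}{u}\,\bar g(\p_t,\p_t)$. Together with the vanishing mixed term this says precisely that $\overline{\operatorname{Ric}}=\varphi\,\bar g$ with $\varphi=-\Delta u/u$, a function on $M$ (independent of $t$). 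Since the dimension of $\bar g$ is at least $3$, Schur's lemma forces $\varphi$ to be locally constant, so $\bar g$ is Einstein; equivalently, one may first invoke the standard fact — obtained by taking the divergence of \eqref{static eqn} and using the contracted Bianchi identity $\operatorname{div}\operatorname{Ric}=\tfrac12 dR$ — that a static potential forces the scalar curvature $R$ of $g$ to be constant, whence tracing \eqref{static eqn} gives $\varphi=-\Delta u/u=R/(n-1)$, $n=\dim M$, which is then constant.

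For the converse, suppose $\bar g$ is Einstein on $\{u\ne 0\}$, say $\overline{\operatorname{Ric}}=\lambda\bar g$ with $\lambda$ constant. The $(\p_t,\p_t)$-component gives $u\Delta u=-\lambda u^2$, hence $\Delta u=-\lambda u$, and the $M$-component gives $\operatorname{Ric}-\tfrac1u\operatorname{Hess}u=\lambda g$. Multiplying the latter by $u$ and substituting $\lambda u=-\Delta u$ yields $u\operatorname{Ric}-\operatorname{Hess}u=-(\Delta u)g$, which rearranges to exactly \eqref{static eqn}. Since $\{u\ne 0\}$ is open and dense in $M$ (a nontrivial solution of \eqref{static eqn} cannot vanish on an open set, by unique continuation along geodesics), the equation extends to all of $M$ by continuity, so $u$ is a static potential.

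The only genuine difficulty is the bookkeeping in the curvature computation — keeping track of signs in the Christoffel symbols and especially in $\overline{\operatorname{Ric}}(\p_t,\p_t)$ — which is best organized by working in $g$-normal coordinates at a point and can be cross-checked against the $t\mapsto-t$ symmetry and the trace identity $\operatorname{tr}_g\operatorname{Hess}u=\Delta u$ (e.g. by testing on flat $\R^n$ with affine $u$, giving Rindler space, or on $\mathbb{H}^n$ with $u=\cosh r$, giving anti–de Sitter space). The one conceptual point is that the Einstein condition requires a \emph{constant} proportionality factor, supplied either by Schur's lemma in dimension $\ge 3$ or by the constancy of the scalar curvature of a static metric; once the three warped-product Ricci identities are in hand, both directions reduce to straightforward algebra.
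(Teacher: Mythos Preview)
The paper does not supply a proof of this proposition; it is simply quoted from Corvino \cite{C} as background. So there is no ``paper's proof'' to compare your attempt against.

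Your argument is the standard one and is correct in substance: the warped-product Ricci identities you record are exactly right, and both implications follow by the algebra you describe. One small logical slip to clean up: in the converse direction you justify extending \eqref{static eqn} from $\{u\ne 0\}$ to all of $M$ by invoking density of $\{u\ne 0\}$, and you argue density via unique continuation \emph{for solutions of \eqref{static eqn}}. But at that stage you have only established \eqref{static eqn} on $\{u\ne 0\}$, not globally, so the reasoning is circular as written. The fix is immediate and does not require density at all: on the interior of $\{u=0\}$ every term of $-(\Delta u)g+\operatorname{Hess}u-u\operatorname{Ric}$ vanishes identically, and at points of $\partial\{u=0\}$ the equation holds by continuity from $\{u\ne 0\}$. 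Hence \eqref{static eqn} holds on all of $M$ regardless of whether $\{u\ne 0\}$ is dense.
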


Static manifolds are interesting geometric objects that arise in other contexts as well. When Fischer and Marsden \cite{F-M} studied the local surjectivity of the scalar curvature operator, they also derived the left hand side of equation (\ref{static eqn}) as the formal $L^2$-adjoint of the linearization of the scalar curvature. Later Corvino \cite{C} considered localized deformations of metrics and extended the local surjectivity result at non-static metrics.

Static manifolds exhibit many interesting geometric properties. For example, the scalar curvature must be constant and the zero-set of a static potential is a totally geodesic hypersurface \cite{C}. Therefore, many people are interested in the classification of static manifolds. In particular, Fischer and Marsden \cite{F-M} raised the possibility of identifying all compact static manifolds. Now it is known
that in dimension $3$, besides flat tori $T^3$ and round spheres $S^3$, $S^1 \times S^2$ is also a compact static manifold. Later, other warped metrics on $S^1 \times_r S^2$ were found to be static \cite{Ejiri}. Kobayashi \cite{K} and Lafontaine \cite{Lafontaine} proved a more general classification result for locally conformally flat static manifolds. Qing and Yuan \cite{Q-Y} gave a classification result in dimension $3$ assuming a weaker hypothesis on the Cotton tensor. On the other hand, Ambrozio \cite{Ambrozio} computed a Bochner type formula and deduced classification results for compact static three-manifolds with positive scalar curvature.

It is natural to extend the above discussion to manifolds with boundary. Of course there are different ways to define so-called ``static manifolds with boundary" with different boundary conditions (see e.g. \cite[Equation (2.4)]{C-E-M}). In \cite{Sheng}, the author followed the ideas of Escobar \cite{E1, E2} and Corvino \cite{C} to consider localized deformations prescribing scalar curvature and boundary mean curvature. It turns out that the map is a
local surjection at generic metrics. So we are interested to know the properties of non-generic metrics when the map is not a local surjection. To be more precise, let $(\Omega^n, \Sigma^{n-1})$ be a complete, connected, smooth domain in $(M^n,g)$ where $\Sigma^{n-1}=\p\Omega^n$ and $n\ge2$. Then $(\Omega, \Sigma)$ is called a non-generic domain (or a ``static manifold with boundary" in our setting) if there is a non-trivial solution $u\in C^\infty(\overline{\Omega})$ to the boundary value problem:
\begin{align}\label{non-generic eqn}
\left\{\begin{aligned} 
-\left(\Delta u\right) g+\operatorname{Hess}u - u \operatorname{Ric} & = 0 \qquad & \text{in } \Omega\\ 
u_\nu \hat{g} & = u h & \text{on } \Sigma.
\end{aligned}\right.
\end{align}
Here $\hat{g}$ is the induced metric on $\Sigma$, $\nu$ is the outward unit normal to $\Sigma$, and $h(X,Y) = -\left<\nu, D_{X}Y\right>$ is the second fundamental form, where $X,Y$ are vector fields tangent to $\Sigma$. And we use $H = \operatorname{tr}_{\hat{g}} h$ to denote the mean curvature. Note that the interior equation is the same as (\ref{static eqn}), while the boundary equation is closely related to the linearization of the mean curvature operator (see \cite{Sheng} for more details). Henceforth, we may interchangeably use ``static manifolds with boundary" and ``non-generic domains" to denote the same concept.

With the above boundary condition, static manifolds with boundary exhibit richer geometric structures than the closed case. Sections \ref{sec2} and \ref{sec3} explore global properties, focusing primarily on compact cases.

Non-compact static manifolds with boundary, on the other hand, are often related to rigidity results in the positive mass theorem \cite{A-B-dL, A-dL, C-N, H-W, H-M-R, M, Q-Y2, S-T} and hold particular significance in general relativity. Therefore, we will try to classify non-generic domains in space forms and in the Schwarzschild manifold, and show their connection with rigidity theorems and the Schwarzschild photon sphere \cite{Cederbaum, Cederbaum-G, Claudel-V-E}.

The paper is organized as follows. In Section \ref{sec2}, we define basic notation and review previous results about non-generic domains. In Section \ref{sec3}, we focus on compact non-generic domains and discuss global properties. In Section \ref{sec4}, we use conformal methods to classify non-generic \textbf{simple} (having only one boundary component) domains in space forms and in the Schwarzschild manifold. We then discuss how to construct general non-generic domains with multiple boundary components.

\section{Preliminaries}\label{sec2}
In this section, we will review some basic properties of non-generic domains. 

Let $(\Omega^n, \Sigma^{n-1})$ be a complete, connected, smooth domain in $(M^n,g)$ where $\Sigma^{n-1}=\p\Omega^n$ and $n\ge2$, so that $(\Omega, \Sigma)$ itself is also a manifold with boundary. Then $(\Omega, \Sigma)$ is called a non-generic domain if there is a non-trivial solution $u\in C^\infty(\overline{\Omega})$ to the boundary value problem (\ref{non-generic eqn}). Many authors call a non-trivial solution to (\ref{static eqn}) a static potential. With the boundary condition, however, we will call a non-trivial solution to (\ref{static eqn}) a \textbf{possible} static potential on $(\Omega, \Sigma)$, and if it satisfies the boundary condition on $\Sigma$ as well, we then call it a static potential on $(\Omega, \Sigma)$ (see \cite{A-dL, A-dL2, C-N} for similar definitions of static potentials and static manifolds with boundary). That is, a non-trivial solution to (\ref{non-generic eqn}) is called a static potential in our setting, and a domain $(\Omega, \Sigma)$ admitting a static potential is called non-generic.

We define as in \cite{Sheng} the operator $L^*: C^{\infty}(\overline{\Omega}) \longrightarrow \mathcal{C}^{\infty}(\overline{\Omega})$ by
$$
L^*u = -\left(\Delta u\right) g+\operatorname{Hess}u - u \operatorname{Ric},
$$
and the operator $\Phi^*: C^{\infty}(\overline{\Omega}) \longrightarrow \mathcal{C}^{\infty}(\overline{\Omega}) \times \mathcal{C}^{\infty}(\Sigma)$ by
$$
\Phi^*u = (L^*u, u_\nu \hat{g}-u h).
$$
Here $\mathcal{C}^{\infty}(\overline{\Omega})$ denotes the space of symmetric $(0,2)$-tensor fields on $\overline{\Omega}$ that are smooth. With this notation, the space of static potentials on $(\Omega, \Sigma)$ can now be denoted as $\operatorname{ker} \Phi^*$, and $(\Omega, \Sigma)$ is a non-generic domain if and only if $\operatorname{ker} \Phi^*\ne0$.

Let us first recall some geometric properties of non-generic domains derived in \cite{Sheng}.

\begin{thm}\label{const mean}
If $(\Omega, \Sigma)$ is a non-generic domain, then the scalar curvature of $\Omega$ is constant, the boundary $\Sigma$ is umbilic, and the mean curvature is constant on $\Sigma$.
\end{thm}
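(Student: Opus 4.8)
The plan is to play the interior equation off against the boundary equation on the open set where the (possible) static potential $u$ is nonzero, and then to propagate the conclusions to all of $\Sigma$ by continuity. First, since $u$ solves $L^{*}u=0$ in $\Omega$ — which is exactly the static equation \eqref{static eqn} — constancy of the scalar curvature $R$ on the connected set $\Omega$ follows from Corvino's argument recalled above: take the $g$-trace of $L^{*}u=0$ to get $\Delta u=-\tfrac{R}{n-1}u$, rewrite the equation as $u\operatorname{Ric}=\operatorname{Hess}u-(\Delta u)g$, apply $\operatorname{div}$, and use the contracted second Bianchi identity together with the commutation $\operatorname{div}\operatorname{Hess}u=d\Delta u+\operatorname{Ric}(\nabla u,\cdot)$ to obtain $u\,dR=0$, hence $dR\equiv 0$ on the dense open set $\{u\neq 0\}$. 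I also need the structure of the zero set: no point of $\overline{\Omega}$ can be a common zero of $u$ and $\nabla u$ (along any geodesic issuing from such a point into $\Omega$, the function $u$ solves the linear ODE $(u\circ\gamma)''=(u\circ\gamma)[\operatorname{Ric}(\gamma',\gamma')-\tfrac{R}{n-1}|\gamma'|^{2}]$, forcing $u\equiv 0$ by connectedness — contradiction). At a boundary point $p\in\Sigma$ with $u(p)=0$ the condition in \eqref{non-generic eqn} gives $u_{\nu}(p)\hat g=u(p)h=0$, so $u_{\nu}(p)=0$; hence $\nabla^{\Sigma}u(p)\neq 0$, and $\{u=0\}\cap\Sigma$ is a smooth embedded hypersurface of $\Sigma$. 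Consequently $U:=\{u\neq 0\}\cap\Sigma$ is open and dense in $\Sigma$.

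On $U$ the boundary equation reads $h=\tfrac{u_{\nu}}{u}\,\hat g$, so $\Sigma$ is totally umbilic there; taking the $\hat g$-trace identifies $\tfrac{u_{\nu}}{u}=\tfrac{H}{n-1}$, i.e.\ $h=\tfrac{H}{n-1}\hat g$ on $U$. Since $h-\tfrac{H}{n-1}\hat g$ is a smooth tensor field on $\Sigma$ vanishing on the dense set $U$, it vanishes identically, so $\Sigma$ is umbilic. For the mean curvature, write $\phi:=H/(n-1)$ (smooth on $\Sigma$, equal to $u_{\nu}/u$ on $U$) and combine two identities on $U$. The first is the contracted Codazzi equation for the umbilic boundary: from $(\nabla^{\Sigma}_{X}h)(Y,Z)-(\nabla^{\Sigma}_{Y}h)(X,Z)=\langle R^{M}(X,Y)Z,\nu\rangle$ with $h=\phi\hat g$, tracing over $Y,Z$ yields $(n-2)\,d^{\Sigma}\phi(X)=-\operatorname{Ric}(\nu,X)$ for $X$ tangent to $\Sigma$. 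The second comes from restricting the static equation $\operatorname{Hess}u=u\operatorname{Ric}+(\Delta u)g$ to a pair $(X,\nu)$ with $X$ tangent: the $(\Delta u)g$ term drops, and $\operatorname{Hess}u(X,\nu)=X(u_{\nu})-h(X,\nabla^{\Sigma}u)=X(u_{\nu})-\phi\,Xu$; differentiating $u_{\nu}=\phi u$ tangentially on $U$ gives $X(u_{\nu})=(X\phi)u+\phi\,Xu$, so $\operatorname{Hess}u(X,\nu)=(X\phi)\,u$, and dividing by $u\neq 0$ gives $d^{\Sigma}\phi(X)=\operatorname{Ric}(\nu,X)$ on $U$. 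Adding the two identities forces $(n-1)\operatorname{Ric}(\nu,X)=0$, hence $\operatorname{Ric}(\nu,\cdot)|_{T\Sigma}\equiv 0$ and $d^{\Sigma}\phi\equiv 0$ on $U$ (for $n=2$ the first identity is vacuous but $\operatorname{Ric}(\nu,X)=\tfrac{R}{2}g(\nu,X)=0$ directly, and the second still gives $d^{\Sigma}\phi=0$).

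It then remains to upgrade $d^{\Sigma}H=(n-1)\,d^{\Sigma}\phi=0$ from $U$ to all of $\Sigma$: since $H$ is smooth and $U$ is dense, $d^{\Sigma}H\equiv 0$ on $\Sigma$, so $H$ is locally constant, i.e.\ constant on each connected component of $\Sigma$.

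I expect the main obstacle to be the interaction with the zero set of $u$. All the clean algebra lives on $\{u\neq 0\}$, so one must first establish that this set meets $\Sigma$ in an open dense subset — the unique-continuation step for static potentials, now carried out in the presence of the boundary and using \eqref{non-generic eqn} to kill the normal derivative at a boundary zero — and then one must be careful that the pointwise conclusions ($h=\tfrac{H}{n-1}\hat g$ and $d^{\Sigma}H=0$) really do extend across $\{u=0\}\cap\Sigma$, which is where smoothness of the second fundamental form and of $H$ (hence continuity of their derivatives) is used. The genuinely new computation compared with the closed case is the identity $d^{\Sigma}\phi=\operatorname{Ric}(\nu,\cdot)$ obtained by evaluating the static equation in mixed tangent–normal directions and invoking the boundary condition; played against contracted Codazzi it simultaneously yields $\operatorname{Ric}(\nu,\cdot)|_{T\Sigma}=0$ and the constancy of the mean curvature.
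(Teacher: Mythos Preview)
Your argument is correct and matches the paper's approach: the proof is not reproduced here but is cited from \cite[Theorem~3.3]{Sheng}, and the Remark immediately following the statement confirms that the cited proof yields $\operatorname{Ric}_{i\nu}=0$ on $\Sigma$ as a byproduct---exactly the identity you extract by playing the mixed $(X,\nu)$ component of the static equation against contracted Codazzi. One cosmetic slip: ``adding'' the two identities actually gives $(n-1)\,d^{\Sigma}\phi=0$; the conclusion $(n-1)\operatorname{Ric}(\nu,X)=0$ comes from substituting one into the other, though of course either route yields both $d^{\Sigma}\phi=0$ and $\operatorname{Ric}(\nu,\cdot)|_{T\Sigma}=0$.
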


\begin{rmk}
    The same proof in \cite[Theorem 3.3]{Sheng} actually gives $\operatorname{Ric}_{i\nu} = 0 \,\,\, (i = 1, \ldots, n-1)$ on $\Sigma$ as well.
\end{rmk}

The next two results use the fact that the equation $L^*u = 0$ reduces to a second-order linear ODE along geodesics, following the proof of \cite[Corollary 2.4]{C}, cf. \cite{F-M}. The map which takes a possible static potential $u$ to its one-jet $(u(p), du(p))$ at a point is injective, so that the dimension of the space of possible static potentials is at most $n+1$. The boundary condition here shows the map $u \mapsto \left(u(p), \left.du\right|_\Sigma(p)\right)$ is injective on static potentials.

\begin{prop}
For non-generic domains $(\Omega, \Sigma)$ in $(M^n,g)$, we have $\operatorname{dim} \operatorname{ker} \Phi^{*} \le n$ in $H_{\mathrm{loc}}^{2}(\Omega\cup\Sigma)$.
\end{prop}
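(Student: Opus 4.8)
The plan is to convert the first-order information in the boundary condition of (\ref{non-generic eqn}) into a linear constraint that cuts the relevant jet space down from dimension $n+1$ to dimension $n$, after first observing that $H^2_{\mathrm{loc}}$ solutions are in fact smooth. Indeed, if $u\in H^2_{\mathrm{loc}}(\Omega\cup\Sigma)$ lies in $\ker\Phi^*$, then tracing $L^*u=0$ yields the elliptic equation $\Delta u=-\tfrac{R}{n-1}\,u$ in $\Omega$, and tracing $u_\nu\hat g=uh$ yields the Robin-type condition $u_\nu=\tfrac{H}{n-1}\,u$ on $\Sigma$; since $R$ and $H$ are constant by Theorem~\ref{const mean}, elliptic boundary regularity for this problem upgrades $u$ to $C^\infty(\overline\Omega)$. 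So the kernel is the same whether computed over $H^2_{\mathrm{loc}}$ or over smooth potentials, and pointwise $1$-jets are meaningful.

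Next I would recall, following \cite[Corollary~2.4]{C} (cf.\ \cite{F-M}), that evaluating $\operatorname{Hess}u=(\Delta u)\,g+u\operatorname{Ric}$ on the velocity of a unit-speed geodesic $\gamma\subset\Omega$ shows that $f(t)=u(\gamma(t))$ solves the second-order linear ODE $f''=\bigl(\operatorname{Ric}(\gamma',\gamma')-\tfrac{R}{n-1}\bigr)f$; hence $u$ along $\gamma$ is determined by the pair $\bigl(u(\gamma(0)),\,du(\gamma(0))\bigr)$. Propagating this $1$-jet along all geodesics issuing from a point, and using that $\Omega$ is connected (the set where $u$ and $du$ both vanish is open and closed), the map $u\mapsto\bigl(u(p),du(p)\bigr)$ is injective on the space of possible static potentials. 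For the proposition I want this with $p\in\Sigma$: there I would propagate along geodesics that enter $\Omega$, noting that the strictly interior directions at $p$ sweep out an open set $U\subset\Omega$ with $p\in\overline U$, so a vanishing $1$-jet at $p$ forces $u\equiv0$ on $U$, and then the open-closed argument in the interior gives $u\equiv0$ on $\overline\Omega$.

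Finally I would use the boundary condition to remove the normal derivative. Fix $p\in\Sigma$. The identity $u_\nu=\tfrac{H}{n-1}\,u$ shows that $u_\nu(p)$ is a fixed scalar multiple of $u(p)$, so writing $du(p)=du|_\Sigma(p)+u_\nu(p)\,\nu^\flat$ with $du|_\Sigma(p)\in T_p^*\Sigma$, the full $1$-jet $\bigl(u(p),du(p)\bigr)$ is recovered from the reduced datum $\bigl(u(p),\,du|_\Sigma(p)\bigr)\in\mathbb{R}\oplus T_p^*\Sigma$. Composing with the injectivity from the previous paragraph, $u\mapsto\bigl(u(p),du|_\Sigma(p)\bigr)$ is injective on $\ker\Phi^*$, whence $\dim\ker\Phi^*\le\dim\bigl(\mathbb{R}\oplus T_p^*\Sigma\bigr)=1+(n-1)=n$.

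The step I expect to require the most care is the boundary bookkeeping in the second paragraph --- making rigorous that a vanishing $1$-jet at a point of $\Sigma$ propagates into $\Omega$ along transverse geodesics and hence everywhere --- together with confirming that the $H^2_{\mathrm{loc}}$ hypothesis genuinely bootstraps to the regularity used in the ODE argument; once these are in place, the jet-space count is immediate.
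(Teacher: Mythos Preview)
Your proposal is correct and follows essentially the same approach as the paper: reduce $L^*u=0$ to a second-order ODE along geodesics so that the one-jet map $u\mapsto(u(p),du(p))$ is injective, then use the traced boundary condition $u_\nu=\tfrac{H}{n-1}u$ at a point $p\in\Sigma$ to recover $u_\nu(p)$ from $u(p)$ and conclude that $u\mapsto(u(p),du|_\Sigma(p))$ is injective on $\ker\Phi^*$. The paper merely sketches this in a sentence before the proposition; your version simply supplies the details (regularity bootstrap, the explicit ODE, the boundary propagation) that the paper leaves implicit.
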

As we will see in Section \ref{sec4}, the upper bound is achieved by simple non-generic domains in space forms.

\begin{prop}
$H_{\mathrm{loc}}^{2}$ elements in $\operatorname{ker} \Phi^*$ are actually in $C^2(\overline{\Omega})$.
\end{prop}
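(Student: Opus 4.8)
\emph{Proof proposal.} The plan is to show that an $H^2_{\mathrm{loc}}(\Omega\cup\Sigma)$ element $u$ of $\ker\Phi^*$ satisfies a scalar linear elliptic boundary value problem of Robin type with smooth coefficients, and then to bootstrap by standard elliptic regularity up to the boundary. First I would take the $g$-trace of the interior equation $L^*u=0$: since $\operatorname{tr}_g g = n$, $\operatorname{tr}_g\operatorname{Hess}u = \Delta u$ and $\operatorname{tr}_g\operatorname{Ric} = R$ (the scalar curvature of $g$, a smooth function on $\overline\Omega$), this gives $-(n-1)\Delta u - Ru = 0$, i.e.
\[
\Delta u = -\tfrac{1}{n-1}R\,u \qquad\text{in }\Omega ,
\]
equivalently $\operatorname{Hess}u = u\big(\operatorname{Ric}-\tfrac{1}{n-1}Rg\big)$. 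For $u\in H^2_{\mathrm{loc}}(\Omega\cup\Sigma)$ every tensor occurring in $L^*u$ lies in $L^2_{\mathrm{loc}}$, so these identities hold in $L^2_{\mathrm{loc}}$, hence distributionally. Next I would take the $\hat g$-trace of the boundary equation $u_\nu\hat g = uh$: using $\operatorname{tr}_{\hat g}\hat g = n-1$ and $\operatorname{tr}_{\hat g}h = H$, and noting that for $u\in H^2_{\mathrm{loc}}(\Omega\cup\Sigma)$ the traces $u|_\Sigma\in H^{3/2}_{\mathrm{loc}}(\Sigma)$ and $u_\nu|_\Sigma\in H^{1/2}_{\mathrm{loc}}(\Sigma)$ are well defined, this yields
\[
u_\nu = \tfrac{1}{n-1}H\,u \qquad\text{on }\Sigma ,
\]
a Robin condition whose coefficient $H$, the mean curvature of the smooth hypersurface $\Sigma$, is smooth.

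Thus $u$ is an $H^2_{\mathrm{loc}}$ (in particular $H^1_{\mathrm{loc}}$) solution of the problem $\Delta u + \tfrac{1}{n-1}Ru = 0$ in $\Omega$, $u_\nu - \tfrac{1}{n-1}Hu = 0$ on $\Sigma$, which is a regular Robin problem: $\Sigma$ is smooth, the coefficients are smooth, and the coefficient of $u_\nu$ in the boundary condition is $1\neq 0$. I would then invoke the standard local Sobolev regularity theory up to the boundary for such problems, together with a bootstrap: if $u\in H^m_{\mathrm{loc}}(\Omega\cup\Sigma)$ then the interior right-hand side $-\tfrac{1}{n-1}Ru$ and the boundary term $\tfrac{1}{n-1}Hu|_\Sigma$ inherit that regularity (multiplication by a smooth function preserves $H^m$), so the regularity estimate raises the Sobolev index of $u$. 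Iterating from $m=2$ gives $u\in H^m_{\mathrm{loc}}(\Omega\cup\Sigma)$ for all $m$, whence $u\in C^\infty(\overline\Omega)$ by Sobolev embedding; in particular $u\in C^2(\overline\Omega)$, as claimed. (Alternatively, interior regularity can be obtained with no boundary theory at all: from $\operatorname{Hess}u = u\big(\operatorname{Ric}-\tfrac{1}{n-1}Rg\big)$ one sees that $\operatorname{Hess}u$ inherits the $H^m_{\mathrm{loc}}$-regularity of $u$, which upgrades $u$ to $H^{m+1}_{\mathrm{loc}}$; only the behaviour at $\Sigma$ genuinely needs the Robin condition.)

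I do not expect a serious obstacle; the argument is routine once the reduction to a scalar problem is in place. The two points needing care are: (i) verifying that the algebraic trace operations are legitimate at the $H^2_{\mathrm{loc}}$ level and that the resulting boundary identity holds in the correct trace space, which is just the trace theorem for $H^2$ functions on a smooth boundary; and (ii) applying the boundary regularity estimate in the form valid for oblique-derivative (Robin) conditions rather than the Dirichlet version, and keeping the function-space bookkeeping straight through the induction. Note, finally, that we only used the \emph{traces} of the relations $\Phi^*u=0$, which are weaker than the full tensorial system; this is harmless, since all that is needed is that $u$ solve some regular elliptic boundary value problem with smooth coefficients, and once $u$ is known to be smooth the original identities $\Phi^*u=0$ hold in the classical sense.
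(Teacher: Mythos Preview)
Your argument is correct. The paper does not supply a proof here; it cites the author's earlier work and remarks only that this proposition, like the preceding dimension bound, uses ``the fact that the equation $L^*u=0$ reduces to a second-order linear ODE along geodesics.'' To the extent that this hint is the paper's proof, your route differs in emphasis: you take the $g$- and $\hat g$-traces of $\Phi^*u=0$ to extract the scalar Robin problem (precisely the system the paper later records as~(\ref{generic trace})) and then run a standard elliptic bootstrap up to the boundary. The ODE-along-geodesics reduction is well suited to proving injectivity of the one-jet map, and hence the dimension bound and the unique-continuation lemma that follows; for regularity \emph{up to} $\Sigma$, your Robin-problem argument is the more direct and robust route, and it works uniformly in all dimensions without first needing enough pointwise regularity to make sense of initial data for the ODE.
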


In \cite[Lemma 2.12]{Sheng} we also derived a very useful lemma.

\begin{lem}\label{no kernel}
There is no non-trivial solution in $H_{\operatorname{loc}}^{2}(\Omega\cup\Sigma)$ to the following equations:
$$
\left\{\begin{aligned} 
L^*u & = 0 \qquad\,\text{in } \Omega\\ 
u = u_\nu & = 0 \qquad\text{on } \Sigma.
\end{aligned}\right.
$$
\end{lem}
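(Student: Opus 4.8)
The plan is to argue by contradiction, following the ODE-reduction philosophy already used for the preceding two propositions. Suppose $u\in H^2_{\mathrm{loc}}(\Omega\cup\Sigma)$ solves $L^*u=0$ in $\Omega$ with $u=u_\nu=0$ on $\Sigma$. By the regularity statement (the proposition that $H^2_{\mathrm{loc}}$ solutions of $L^*u=0$ are $C^2$ up to the boundary — or rather its interior analogue together with elliptic boundary regularity), $u$ is at least $C^2$ near $\Sigma$, so the vanishing of $u$ and $u_\nu$ on $\Sigma$ makes classical sense. The key structural fact, recalled in the excerpt, is that along any geodesic $\gamma$ the function $f(s)=u(\gamma(s))$ satisfies a second-order linear ODE whose coefficients are determined by $g$ along $\gamma$; consequently $f$ is determined by the initial data $(f(0),f'(0))$, and $f'(s) = \langle du(\gamma(s)),\gamma'(s)\rangle$ so that the one-jet $(u(p),du(p))$ propagates.

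First I would fix a boundary point $p\in\Sigma$ and consider the geodesic $\gamma$ with $\gamma(0)=p$ and $\gamma'(0)=-\nu(p)$, i.e. the geodesic entering $\Omega$ normally. The hypothesis $u(p)=0$ gives $f(0)=0$, and $u_\nu(p)=0$ gives $f'(0) = \langle du(p),-\nu(p)\rangle = -u_\nu(p)=0$. Hence the initial data of the linear ODE vanish, so $f\equiv 0$ along $\gamma$; in particular $u$ vanishes on the normal geodesic emanating from every boundary point. To upgrade this to $u\equiv 0$ on a neighbourhood of $\Sigma$ in $\Omega$, I would instead consider, for each tangent direction $X\in T_p\Sigma$, the geodesic $\sigma$ with $\sigma(0)=p$, $\sigma'(0)=X$: since $u\equiv 0$ on $\Sigma$ we get $\frac{d}{ds}\big|_{0}u(\sigma(s)) = \langle du(p),X\rangle = 0$ for all $X$ tangent to $\Sigma$, and combined with $u_\nu(p)=0$ this shows $du(p)=0$ as well as $u(p)=0$; that is, the \emph{entire} one-jet of $u$ vanishes at $p$. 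Then along \emph{any} geodesic through $p$ the ODE has zero initial data, so $u$ vanishes on the whole geodesic cone from $p$, and letting $p$ range over $\Sigma$ (using that normal geodesics fill a one-sided tubular neighbourhood, or simply that $\overline\Omega$ is covered by geodesics issuing from $\Sigma$ since $\Omega$ is connected with boundary $\Sigma$) forces $u\equiv 0$ on a nonempty open subset of $\Omega$.

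Finally I would run the standard unique-continuation / connectedness argument: the set where the one-jet of $u$ vanishes is closed by continuity, and it is open because from any interior point $q$ with vanishing one-jet the same geodesic-ODE argument propagates the vanishing along every geodesic through $q$, covering a neighbourhood of $q$; since $\Omega\cup\Sigma$ is connected and this set is nonempty, $u\equiv 0$ on $\overline\Omega$, contradicting non-triviality. The main obstacle I anticipate is the regularity bookkeeping at the boundary: one must be sure that an $H^2_{\mathrm{loc}}(\Omega\cup\Sigma)$ solution is regular enough near $\Sigma$ for the geodesic-ODE computation and the evaluation of $du|_\Sigma$ to be legitimate, and that the overdetermined boundary data $u=u_\nu=0$ can be imposed; this is where I would lean on the $C^2$-regularity proposition and on interior elliptic regularity for $L^*u=0$ away from $\Sigma$. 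A secondary point to handle carefully is the claim that geodesics issuing (normally or otherwise) from $\Sigma$ cover $\overline\Omega$ — for the local statement near $\Sigma$ a tubular-neighbourhood argument suffices, and then openness-closedness finishes the global spread without needing a global covering statement.
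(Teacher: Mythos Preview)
Your argument is correct and is essentially the paper's own proof, carried out in more detail: the paper simply remarks that regularity extends $u$ to $C^2$ up to $\Sigma$, that $u=u_\nu=0$ on $\Sigma$ forces the full one-jet of $u$ to vanish at each boundary point, and that the ODE reduction along geodesics then gives $u\equiv 0$ by the injectivity of the one-jet map at a point (your open--closed argument). The initial detour through the normal geodesic alone is unnecessary, as you yourself notice, but otherwise the strategy matches.
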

This follows readily, since the interior equation $L^*u = 0$ reduces to an ODE along geodesics. A solution $u$ extends to the boundary by the preceding regularity result, and the boundary conditions give trivial initial conditions for the ODE along any geodesic. And as a quick corollary, for non-generic domains with totally geodesic boundary, the space of static potentials is fully determined by the boundary data on $\Sigma$.
\begin{cor}
Let $(\Omega, \Sigma)$ be a non-generic domain. If $H=0$, then $\Sigma$ is totally geodesic, and an element of $\operatorname{ker} \Phi^*$ is fully determined by its boundary data on $\Sigma$.
\end{cor}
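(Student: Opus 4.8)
The plan is to read off both assertions directly from Theorem~\ref{const mean} and Lemma~\ref{no kernel}, so the corollary is essentially a packaging statement. For the first claim, recall that Theorem~\ref{const mean} already gives that $\Sigma$ is umbilic, i.e. $h=\tfrac{H}{n-1}\hat g$. Substituting the hypothesis $H=0$ yields $h\equiv 0$ on $\Sigma$, which is precisely the statement that $\Sigma$ is totally geodesic.

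Next I would specialize the boundary equation in (\ref{non-generic eqn}). With $h\equiv 0$, the condition $u_\nu\hat g = u\,h$ collapses to $u_\nu\hat g = 0$, and since $\hat g$ is nondegenerate this forces $u_\nu = 0$ on $\Sigma$ for every $u\in\operatorname{ker}\Phi^*$. Thus each static potential on $(\Omega,\Sigma)$ automatically has vanishing normal derivative along $\Sigma$, so its relevant boundary data reduce to the Dirichlet trace $u|_\Sigma$.

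For the uniqueness statement, suppose $u_1,u_2\in\operatorname{ker}\Phi^*$ satisfy $u_1|_\Sigma = u_2|_\Sigma$, and set $w=u_1-u_2$. By linearity of $L^*$ we have $L^*w=0$ in $\Omega$; by the previous paragraph $w_\nu = u_{1,\nu}-u_{2,\nu}=0$ on $\Sigma$; and $w=0$ on $\Sigma$ by assumption. Hence $w$ is an $H^2_{\mathrm{loc}}(\Omega\cup\Sigma)$ solution of the Cauchy problem in Lemma~\ref{no kernel}, so $w\equiv 0$, i.e. $u_1=u_2$. Equivalently, the linear restriction map $\operatorname{ker}\Phi^*\to C^\infty(\Sigma)$, $u\mapsto u|_\Sigma$, is injective, which is what ``fully determined by its boundary data on $\Sigma$'' means.

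This argument has no real obstacle: all the analytic content — that $L^*u=0$ reduces to a second-order linear ODE along geodesics, so that trivial Cauchy data force $u\equiv 0$ — is already contained in Lemma~\ref{no kernel}. The only point to check is that the difference $w$ still belongs to the regularity class to which Lemma~\ref{no kernel} applies, and this is immediate from linearity together with the regularity of elements of $\operatorname{ker}\Phi^*$ recorded above.
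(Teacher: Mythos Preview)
Your proposal is correct and follows essentially the same route as the paper: deduce $h\equiv 0$ from umbilicity plus $H=0$, read off $u_\nu=0$ on $\Sigma$ from the boundary equation, and then apply Lemma~\ref{no kernel} to the difference of two elements with the same Dirichlet trace. The paper's proof is just a terser version of exactly this argument.
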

\begin{proof}
If $H=0$, then for any $u\in\operatorname{ker} \Phi^*$, we have
$$
L^*u = 0 \quad \text {in }  \Omega\qquad\text{and}\qquad u_\nu = 0  \quad\text {on }  \Sigma.
$$
So for any $u_1, u_2\in\operatorname{ker} \Phi^*$ with $u_1\equiv u_2$ on $\Sigma$, consider $v = u_1-u_2\in\operatorname{ker} \Phi^*$. We have 
    $$
\left\{\begin{aligned} 
L^*v & = 0 \qquad\,\text{in } \Omega\\ 
v = v_\nu & = 0 \qquad\text{on } \Sigma.
\end{aligned}\right.
$$
By Lemma \ref{no kernel}, $v\equiv0$ in $\Omega$. This means $u_1\equiv u_2$ in $\Omega$.
\end{proof}

\section{Compact Non-generic Domains}\label{sec3}
Let us now focus on compact non-generic domains. 

Taking the trace of (\ref{non-generic eqn}), we get
\begin{align}\label{generic trace}
\left\{\begin{aligned}
\Delta u+\frac{R}{n-1} u & =0 \quad & \text {in }  \Omega\\
u_\nu-\frac{H}{n-1} u & =0 & \text {on }  \Sigma.
\end{aligned}\right.
\end{align}
This means, for any static potential $u$, they must first satisfy (\ref{generic trace}). It is very useful when we want to get some global properties of compact non-generic domains.

From Theorem \ref{const mean} we know the scalar curvature $R$ of $\Omega$ and the mean curvature $H$ of $\Sigma$ are constant on non-generic domains. Thus all the non-generic domains may be divided into 9 cases, according to whether $R, H$ are positive, negative or zero. However, some of the cases are not possible. Cruz and Vit\'{o}rio \cite{C-V} found that:
\begin{prop}[Cruz-Vit\'{o}rio]
Let $(\Omega, \Sigma)$ be a compact non-generic domain.
\begin{itemize}
\item[(i)] If $R = 0$, then $H \ge 0$; and if $H = 0$ as well, then $h \equiv 0$.
\item[(ii)] If $H = 0$, then $R \ge 0$; and if $R = 0$ as well, then $\operatorname{Ric} \equiv 0$.
\end{itemize}
\end{prop}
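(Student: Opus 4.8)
The plan is to exploit the trace equation \eqref{generic trace} together with integration by parts, which is the standard route for this kind of dichotomy. First I would take a static potential $u$ (which exists since $(\Omega,\Sigma)$ is non-generic) and integrate the interior equation $\Delta u + \frac{R}{n-1}u = 0$ over $\Omega$; using the divergence theorem and the boundary condition $u_\nu = \frac{H}{n-1}u$ gives the relation
\[
\frac{H}{n-1}\int_\Sigma u \, d\sigma = -\frac{R}{n-1}\int_\Omega u \, dV .
\]
Next I would multiply the interior equation by $u$ itself and integrate, obtaining
\[
\int_\Omega |\nabla u|^2 \, dV \;=\; \frac{R}{n-1}\int_\Omega u^2 \, dV \;+\; \frac{H}{n-1}\int_\Sigma u^2 \, d\sigma ,
\]
again after applying the boundary condition. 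These two identities are the workhorses.

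For part (i), assume $R = 0$. The $u$-weighted identity reduces to $\int_\Omega |\nabla u|^2 = \frac{H}{n-1}\int_\Sigma u^2$. If $H < 0$ the right side is $\le 0$ while the left side is $\ge 0$, forcing $\nabla u \equiv 0$, so $u$ is a nonzero constant; but then the boundary condition $u_\nu \hat g = u h$ in \eqref{non-generic eqn} becomes $0 = u\, h$, i.e.\ $h \equiv 0$, hence $H = 0$, contradicting $H<0$. Thus $H \ge 0$. If moreover $H = 0$, the same identity gives $\nabla u \equiv 0$ directly, $u$ is a nonzero constant, and the boundary equation again yields $h \equiv 0$. (Alternatively, when $H=0$ the earlier corollary already forces $\Sigma$ totally geodesic; either way $h\equiv0$.)

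For part (ii), assume $H = 0$. Now the full static equation $\operatorname{Hess} u - (\Delta u) g - u\operatorname{Ric} = 0$ combined with $\Delta u = -\frac{R}{n-1}u$ gives $\operatorname{Hess} u = u\big(\operatorname{Ric} - \frac{R}{n-1}g\big)$; I would use the standard Bochner/Reilly-type computation (as in the closed-manifold arguments of Corvino, Fischer–Marsden, or directly the Bochner formula applied to $u$) to produce an integral identity whose boundary terms vanish because $H = 0$ and $\Sigma$ is totally geodesic (by the corollary), and whose sign is controlled by $R$. Concretely, the Reilly formula applied to $u$ on $(\Omega,\Sigma)$ with Neumann-type data $u_\nu = 0$ yields
\[
\int_\Omega \Big( |\operatorname{Hess} u|^2 - (\Delta u)^2 \Big) dV \;+\; \int_\Omega \operatorname{Ric}(\nabla u, \nabla u)\, dV \;=\; \int_\Sigma \big( H_\Sigma |\nabla^\Sigma u|^2 + h(\nabla^\Sigma u, \nabla^\Sigma u)\big) d\sigma = 0,
\]
using $h \equiv 0$ on $\Sigma$; feeding in $\operatorname{Hess} u = u(\operatorname{Ric} - \frac{R}{n-1}g)$ and $\Delta u = -\frac{R}{n-1}u$ turns the left side into a manifestly nonnegative quantity plus a multiple of $R\int_\Omega u^2$, forcing $R \ge 0$, and when $R = 0$ forcing $\operatorname{Hess} u \equiv 0$ and $\operatorname{Ric}(\nabla u,\nabla u) \equiv 0$; then $\operatorname{Hess} u = u\operatorname{Ric}$ with $u \not\equiv 0$ (and $u$ a nonzero constant on the zero set being excluded by Lemma \ref{no kernel} type reasoning) gives $\operatorname{Ric} \equiv 0$ on the open set where $u \ne 0$, and hence everywhere by real-analyticity of Einstein-type metrics, or by a connectedness/continuity argument.

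The main obstacle is the last step of (ii): getting from "$\operatorname{Hess} u \equiv 0$ and $u$ a static potential" to "$\operatorname{Ric} \equiv 0$ on all of $\Omega$." Where $u \ne 0$ the static equation immediately gives $\operatorname{Ric} = \frac{R}{n-1} g = 0$, but one must rule out that $u$ vanishes on a large set; here I would invoke that $\operatorname{Hess} u \equiv 0$ makes $u$ an affine function along geodesics, so its zero set is a totally geodesic hypersurface of measure zero unless $u \equiv 0$, and then use continuity of $\operatorname{Ric}$ to extend $\operatorname{Ric} \equiv 0$ across it. I would also need to double-check the boundary terms in the Reilly formula genuinely vanish — this rests on $\Sigma$ being totally geodesic, which is exactly the content of the corollary to Lemma \ref{no kernel} in the $H = 0$ case, so that input must be cited carefully.
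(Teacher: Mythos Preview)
The paper does not prove this proposition; it is quoted from Cruz--Vit\'orio \cite{C-V}. The paper does, however, prove the subsequent sharpening (``if $R<0$ then $H>0$; if $H<0$ then $R>0$'') by exactly the $u$-weighted integration you set up, and it records the $R=H=0$ equality case separately. So the benchmark method here is simply your second integral identity
\[
\int_\Omega |Du|^2 \, d\mu_g \;=\; \frac{R}{n-1}\int_\Omega u^2\, d\mu_g \;+\; \frac{H}{n-1}\int_\Sigma u^2\, d\sigma_g,
\]
applied symmetrically in $R$ and $H$.

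Your argument for (i) is correct and follows this line. For (ii), however, you abandon the identity you just derived and reach for the Reilly formula, and there the sketch has a genuine gap. After substituting $\operatorname{Hess}u = u(\operatorname{Ric}-\tfrac{R}{n-1}g)$ and $\Delta u = -\tfrac{R}{n-1}u$ into
\[
\int_\Omega \big(|\operatorname{Hess}u|^2 - (\Delta u)^2\big) + \int_\Omega \operatorname{Ric}(Du,Du) = 0,
\]
you do \emph{not} get ``a manifestly nonnegative quantity plus a multiple of $R\int u^2$'': the term $\operatorname{Ric}(Du,Du)$ has no sign, and $|\operatorname{Hess}u|^2 - (\Delta u)^2 = u^2\big(|\operatorname{Ric}|^2 - \tfrac{R^2}{n-1}\big)$ is likewise indefinite (Cauchy--Schwarz only gives $|\operatorname{Ric}|^2 \ge R^2/n$). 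Even in the case $R=0$, the identity reduces to $\int u^2|\operatorname{Ric}|^2 + \int \operatorname{Ric}(Du,Du)=0$, from which you cannot conclude $\operatorname{Ric}\equiv 0$.

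The fix is to notice that your own $u$-weighted identity already finishes (ii) in two lines, exactly as you did (i). Setting $H=0$ gives $\int_\Omega |Du|^2 = \tfrac{R}{n-1}\int_\Omega u^2$; if $R<0$ this forces $Du\equiv 0$ and then $0 = \tfrac{R}{n-1}\int u^2$, so $u\equiv 0$, a contradiction. Hence $R\ge 0$. If moreover $R=0$, then $Du\equiv 0$, so $u$ is a nonzero constant, and the full interior equation $-(\Delta u)g + \operatorname{Hess}u - u\operatorname{Ric}=0$ collapses to $-u\operatorname{Ric}=0$, giving $\operatorname{Ric}\equiv 0$ directly---no Bochner/Reilly computation, no analyticity argument, no worry about the zero set of $u$.
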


Note that:
\begin{itemize}
\item[(i)] If $R = 0$ and $H > 0$, then $\frac{H}{n-1}$ is a Steklov eigenvalue, which basically means
$$
\Delta u =0 \quad \text {in }  \Omega\qquad\text{and}\qquad u_\nu =\frac{H}{n-1} u  \quad\text {on }  \Sigma.
$$
In this case, $\operatorname{ker}\Phi^*$ lies in the eigenspace corresponding to the Steklov eigenvalue $\frac{H}{n-1}$.
\item[(ii)] If $R > 0$ and $H = 0$, then $\frac{R}{n-1}$ is an eigenvalue of the Neumann boundary value problem; in this case, $\operatorname{ker}\Phi^*$ lies in the eigenspace corresponding to the eigenvalue $\frac{R}{n-1}$.
\item[(iii)*] If $R = H = 0$, then $g$ is Ricci flat, $\Sigma$ is totally geodesic, and $\operatorname{ker}\Phi^*$ consists of constant functions on $\overline{\Omega}$. In this case, the boundary scalar curvature $R_\Sigma:= R_{\hat{g}}$ also vanishes.
\end{itemize}
\begin{prop}
    Let $(\Omega, \Sigma)$ be a compact non-generic domain. If $R = H = 0$, then the boundary scalar curvature $R_\Sigma = 0$.
\end{prop}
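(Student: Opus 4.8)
The plan is to extract the needed geometric data from the observations in part (iii)$^*$ above and then feed them into the contracted Gauss equation for the hypersurface $\Sigma \subset \Omega$.

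First I would record three facts. By Theorem \ref{const mean} the boundary $\Sigma$ is umbilic, so $h = \tfrac{H}{n-1}\hat{g}$; since $H = 0$ this forces $h \equiv 0$, hence $\Sigma$ is totally geodesic and in particular $|h|^2 = 0$. Next, since $H = 0$ and $R = 0$, part (ii) of the Cruz--Vit\'{o}rio proposition above gives $\operatorname{Ric} \equiv 0$ on $\Omega$; in particular $\operatorname{Ric}(\nu,\nu) = 0$ along $\Sigma$. Finally, $R = 0$ is part of the hypothesis.

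Then I would invoke the contracted Gauss equation for the hypersurface $\Sigma^{n-1} \subset \Omega^n$ with outward unit normal $\nu$,
$$
R_\Sigma \;=\; R \;-\; 2\operatorname{Ric}(\nu,\nu) \;+\; H^2 \;-\; |h|^2 ,
$$
and substitute the four vanishing quantities $R$, $\operatorname{Ric}(\nu,\nu)$, $H$, $|h|^2$ on the right-hand side to conclude $R_\Sigma \equiv 0$ on $\Sigma$, which is exactly the claim.

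I do not expect a genuine obstacle here: the statement is essentially bookkeeping, combining the umbilicity of Theorem \ref{const mean} with the Ricci-flatness supplied by Cruz--Vit\'{o}rio. The only point worth a line of care is that the argument really uses the full vanishing of $\operatorname{Ric}$, and not merely the components $\operatorname{Ric}_{i\nu} = 0$ noted in the remark after Theorem \ref{const mean}, so the quoted Cruz--Vit\'{o}rio result is doing essential work in controlling the term $\operatorname{Ric}(\nu,\nu)$; everything else is the Gauss equation and the immediate consequence $h \equiv 0$ of umbilicity plus $H = 0$.
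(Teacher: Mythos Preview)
Your argument is correct and follows essentially the same route as the paper: both feed $R=0$, $\operatorname{Ric}(\nu,\nu)=0$, $H=0$, and $|h|^2=0$ into the contracted Gauss equation. The only difference is in how $\operatorname{Ric}(\nu,\nu)=0$ is obtained---you invoke the full Ricci-flatness from Cruz--Vit\'{o}rio, whereas the paper instead uses that any static potential $u$ here is a nonzero constant and reads off $R_{\nu\nu}=0$ directly from the $\nu\nu$-component $-\Delta u + u_{\nu\nu} - uR_{\nu\nu}=0$ of $L^*u=0$.
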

\begin{proof}
By looking at the $\nu\nu$-th term of $L^*u=0$, we have 
$$
-\Delta u + u_{\nu\nu} - uR_{\nu\nu} = 0.
$$
Since $u$ is a non-zero constant, $R_{\nu\nu}$ must be 0. Then by the Gauss equation, 
$$
R_\Sigma = R - 2R_{\nu\nu} - \|h\|^2 + H^2 = 0.
$$    
\end{proof}

We may further rule out some more cases:
\begin{prop}
Let $(\Omega, \Sigma)$ be a compact non-generic domain. If $R<0$, then $H>0$; if $H<0$, then $R>0$.
\end{prop}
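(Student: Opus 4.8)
The plan is to reduce the statement to a single remaining case and then extract a contradiction from an integration-by-parts argument applied to the trace equation (\ref{generic trace}).

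First I would use the Cruz--Vit\'orio proposition. Since ``$H=0$ implies $R\ge 0$'' and ``$R=0$ implies $H\ge 0$'', the hypothesis $R<0$ already forces $H\ne 0$, and the hypothesis $H<0$ already forces $R\ne 0$. So, to prove both implications, it suffices to rule out the single case $R<0$ and $H<0$: once that case is excluded, $R<0$ leaves only $H>0$, and $H<0$ leaves only $R>0$.

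Next I would suppose, for contradiction, that $R<0$ and $H<0$, and let $u\in\operatorname{ker}\Phi^*$ be non-trivial. Multiplying the interior equation $\Delta u+\tfrac{R}{n-1}u=0$ of (\ref{generic trace}) by $u$, integrating over the compact domain $\Omega$, and applying Green's first identity together with the boundary condition $u_\nu=\tfrac{H}{n-1}u$ on $\Sigma$, I obtain
\[
-\int_\Omega |\nabla u|^2\,dV \;+\; \frac{H}{n-1}\int_\Sigma u^2\,dA \;+\; \frac{R}{n-1}\int_\Omega u^2\,dV \;=\; 0.
\]
When $R<0$ and $H<0$, each of the three terms on the left is non-positive, hence each must vanish. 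Vanishing of the first term shows $u$ is constant on $\overline\Omega$; vanishing of the second term (using $H\ne 0$) shows $u\equiv 0$ on $\Sigma$; together these give $u\equiv 0$, contradicting non-triviality. (Alternatively, once $u$ is constant, the interior equation gives $\tfrac{R}{n-1}u=0$, so $u\equiv 0$ since $R\ne 0$.)

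I do not expect a serious obstacle here; the argument is short. The only points needing care are the sign bookkeeping — making sure that the cited Cruz--Vit\'orio result together with the single excluded case $\{R<0,\,H<0\}$ genuinely covers both claimed implications — and the observation that compactness of $\Omega$ is exactly what makes the integration by parts legitimate with no additional boundary contributions.
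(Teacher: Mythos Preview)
Your proof is correct and rests on the same integration-by-parts identity the paper uses. The only difference is that your preliminary reduction via the Cruz--Vit\'orio proposition is unnecessary: the paper argues directly from the identity that if $R<0$ then $\int_\Omega\bigl(-|Du|^2+\tfrac{R}{n-1}u^2\bigr)\,d\mu_g<0$ (since $u\not\equiv 0$), forcing the boundary term $\tfrac{H}{n-1}\int_\Sigma u^2\,d\sigma_g$ to be strictly positive and hence $H>0$, with the symmetric argument for $H<0$.
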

\begin{proof}
If $0\ne u\in\operatorname{ker}\Phi^*$, then $u$ must satisfy (\ref{generic trace}). Multiplying $u$ to the first equation and integrating it over $\Omega$, we get, with $Du = \operatorname{grad}_g u$ the gradient of $u$,
$$
\begin{aligned}
0 &=\int_{\Omega}\left(u \Delta u+\frac{R}{n-1} u^{2}\right) d \mu_g\\
	&=\int_{\Omega}\left(-\left|D u\right|^{2}+\frac{R}{n-1} u^{2}\right) d \mu_g+\int_{\Sigma} u u_\nu\,d\sigma_g\\
	&=\int_{\Omega}\left(-\left|D u\right|^{2}+\frac{R}{n-1} u^{2}\right) d \mu_g+\int_{\Sigma} \frac{H}{n-1} u^{2}\,d\sigma_g.
\end{aligned}
$$
Hence, if $R<0$, the first integral is strictly negative, then $H$ must be strictly positive. Similarly, if $H<0$, then $R>0$.
\end{proof}

As a summary, we have the following diagram for compact non-generic domains:
\begin{table}[!htbp]
\centering
\begin{tabular}{|c|c|c|c|}
\hline
\diagbox{H}{R}&$+$&$0$&$-$\\ 
\hline
$+$&?&Steklov&?\\
\hline
$0$&Neumann&*&$\times$\\
\hline
$-$&?&$\times$&$\times$\\
\hline
\end{tabular}
 \caption{Cases of compact non-generic domains}
  \label{tab: compact non-generic domains}
\end{table}



In the next section, we will see some more examples that will fill up Table \ref{tab: compact non-generic domains} above.

\section{Non-generic Domains in Space Forms and in the Schwarzschild Manifold}\label{sec4}
An umbilic hypersurface in a space form has constant mean curvature \cite[p.~182 Exercise 6a]{dC}, thus a region bounded by an umbilic hypersurface in a space form satisfies all the necessary conditions of a non-generic domain. We will see that such regions are indeed non-generic domains.

For simplicity, we will first consider non-generic domains with only one boundary component. We call them \textbf{simple} non-generic domains. If a non-generic domain has multiple boundary components, then it can be viewed as the intersection of simple non-generic domains, as long as their boundaries do not intersect with each other. We will briefly discuss ``non-simple" non-generic domains and look at an example in Section \ref{non-simple}.

\subsection{The Euclidean space $\mathbb{R}^n$}
Consider the flat metric on the Euclidean space $\mathbb{R}^n$. First of all, notice that we have the following result:
\begin{prop}
The possible static potentials on non-generic domains in $\mathbb{R}^{n}$ are precisely those functions in $\operatorname{span}\{1, x_1, \cdots, x_n\}$, where $(x_1, \cdots, x_{n})$ are the coordinates of $\mathbb{R}^{n}$.
\end{prop}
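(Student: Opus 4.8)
The plan is to exploit that the Euclidean metric is flat, so the interior equation $L^*u=0$ collapses dramatically. Since $\operatorname{Ric}\equiv 0$ on $\mathbb{R}^n$, a possible static potential $u$ (a non-trivial solution of $L^*u=0$ on $\Omega$, ignoring the boundary condition) satisfies
$$
-(\Delta u)\,g + \operatorname{Hess}u = 0 \qquad\text{in }\Omega.
$$
First I would take the trace of this equation with respect to $g$: since $\operatorname{tr}_g g = n$ and $\operatorname{tr}_g\operatorname{Hess}u = \Delta u$, one gets $(1-n)\Delta u = 0$, and because $n\ge 2$ this forces $\Delta u\equiv 0$ in $\Omega$. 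Substituting back into the displayed equation yields $\operatorname{Hess}u\equiv 0$, i.e. all second-order partial derivatives of $u$ vanish. Since $\Omega$ is connected (this is part of our standing hypothesis on domains), a function with vanishing Hessian on $\Omega$ is the restriction of an affine function, so $u\in\operatorname{span}\{1,x_1,\dots,x_n\}$.

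For the converse, I would simply observe that any $u\in\operatorname{span}\{1,x_1,\dots,x_n\}$ has $\operatorname{Hess}u=0$ and $\Delta u=0$, and since $\operatorname{Ric}\equiv 0$ in $\mathbb{R}^n$ we get $L^*u = -(\Delta u)g+\operatorname{Hess}u-u\operatorname{Ric}=0$; thus every such $u$ is a possible static potential on any domain. This shows the space of possible static potentials is exactly $\operatorname{span}\{1,x_1,\dots,x_n\}$ (an $(n+1)$-dimensional space, consistent with the general one-jet bound recalled in Section~\ref{sec2}).

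There is no serious obstacle here; the proof is a direct computation. The only points that need care are (i) invoking $n\ge2$ to pass from $(1-n)\Delta u=0$ to $\Delta u=0$, and (ii) using connectedness of $\Omega$ to upgrade the pointwise identity $\operatorname{Hess}u=0$ to the global conclusion that $u$ is affine. If one wants to allow merely $H^2_{\mathrm{loc}}$ solutions rather than smooth ones, the regularity statement already recorded in the excerpt ($H^2_{\mathrm{loc}}$ solutions lie in $C^2$, and in fact elliptic regularity gives $C^\infty$) makes the same argument go through verbatim.
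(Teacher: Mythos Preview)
Your proof is correct and follows essentially the same approach as the paper: both reduce $L^*u=0$ to $\operatorname{Hess}u=0$ by first using the trace to kill $\Delta u$ (the paper cites the general trace identity (\ref{generic trace}) with $R=0$, while you trace directly), and then conclude $u$ is affine. The converse is handled identically.
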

\begin{proof}
For $i = 1, \cdots, n$, it is clear that $L^*(x_i) = 0 = L^*(1)$. On the other hand, they are the only possible static potentials, since for any static potential $u$,
$$
L^*u = -\left(\Delta u\right) g + \operatorname{Hess}u = 0.
$$
From (\ref{generic trace}) we know $\Delta u = 0$. So this means $\operatorname{Hess}u = 0$, which implies $u$ is a linear combination of the coordinates $x_i$ and constant functions.
\end{proof}

As for totally umbilical hypersurfaces $\Sigma\subset\mathbb{R}^n$, they can only be spheres or planes \cite[p.~183 Exercise 6c]{dC}. Cruz and Vit\'{o}rio \cite{C-V} found that the Euclidean ball with its boundary is a non-generic domain in $\mathbb{R}^n$, and later Ho and Huang \cite{H-H} determined the space of static potentials. We will do likewise for the exterior of a Euclidean ball, and the upper half-space. 

\begin{prop}
Let $S_r$ be an $(n-1)$-dimensional Euclidean sphere of radius $r$ in $\mathbb{R}^{n}$. Then both the interior and the exterior region of $S_r$ are non-generic domains in $\mathbb{R}^{n}$. Moreover,
$$\operatorname{ker}\Phi^* = \operatorname{span}\{x_1, \cdots, x_n\}.$$

Let $(\Omega, \Sigma)$ be the upper half-space, then it is also a non-generic domain in $\mathbb{R}^{n}$. Moreover, if we parametrize $\Omega=\{x_n > 0\}$ and $\Sigma = \{x_n = 0\}$, then 
$$\operatorname{ker}\Phi^* = \operatorname{span}\{1, x_1, \cdots, x_{n-1}\}.$$
\end{prop}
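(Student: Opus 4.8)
The plan is to exploit the previous proposition, which pins down the space of \emph{possible} static potentials on any domain in $\mathbb{R}^n$ as $\operatorname{span}\{1, x_1, \ldots, x_n\}$, and then simply to determine which linear combinations satisfy the boundary condition $u_\nu\hat g = uh$ on the relevant hypersurface. So in each case the computation reduces to linear algebra on an $(n+1)$-dimensional space together with an explicit evaluation of $u$, $u_\nu$, and $h$ on $\Sigma$. I would treat the sphere $S_r$ and the hyperplane separately, and for the sphere handle the interior and exterior regions uniformly (they share the same $\Sigma$, only the direction of $\nu$ flips, and $h$ flips with it, so the boundary equation is insensitive to the choice).

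For the sphere case, center $S_r$ at the origin, so on $\Sigma$ the outward (from the ball) normal is $\nu = x/r$, the second fundamental form is $h = \tfrac{1}{r}\hat g$ (with the sign convention $h(X,Y)=-\langle\nu, D_XY\rangle$, the sphere as boundary of the ball is umbilic with $h = \pm\tfrac1r\hat g$; I will fix the sign when writing it up), and the boundary condition becomes $u_\nu = \tfrac{1}{r}u$ on $\Sigma$. Writing a general possible static potential as $u = a_0 + \sum_i a_i x_i$, one computes $u_\nu = \langle Du, \nu\rangle = \tfrac1r\sum_i a_i x_i = \tfrac1r(u - a_0)$, so the boundary condition holds on all of $S_r$ iff $a_0 = 0$. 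Hence $\operatorname{ker}\Phi^* = \operatorname{span}\{x_1,\ldots,x_n\}$, and this is the same answer for both the bounded ball and its exterior, since the exterior region is still a complete connected smooth domain with the same boundary. I should also note that one must check the exterior is genuinely a ``domain'' in the sense of the paper (complete, connected, with $\Sigma$ as its full boundary), which is immediate.

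For the upper half-space $\Omega = \{x_n > 0\}$, $\Sigma = \{x_n = 0\}$, the outward normal is $\nu = -e_n$ (pointing toward decreasing $x_n$), the hyperplane is totally geodesic so $h \equiv 0$, and the boundary condition degenerates to $u_\nu = 0$ on $\Sigma$. For $u = a_0 + \sum_i a_i x_i$ we have $u_\nu = -\partial_n u = -a_n$, so the condition forces $a_n = 0$, leaving $u \in \operatorname{span}\{1, x_1, \ldots, x_{n-1}\}$; conversely every such $u$ is harmonic with vanishing normal derivative on $\Sigma$, hence lies in $\operatorname{ker}\Phi^*$. This also shows the half-space realizes the Neumann case in Table~\ref{tab: compact non-generic domains} pattern (here $R = H = 0$ with totally geodesic boundary but noncompact).

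I do not anticipate a serious obstacle here; the only genuinely delicate points are bookkeeping ones: getting the sign of $h$ consistent with the paper's convention $h(X,Y) = -\langle\nu, D_XY\rangle$ and with the choice of $\nu$ as the \emph{outward} normal for each of the three regions (which differs between the ball interior and exterior), and confirming that the boundary equation $u_\nu\hat g = uh$ — a tensor equation — really does reduce, under umbilicity $h = \tfrac{H}{n-1}\hat g$, to the scalar equation $u_\nu = \tfrac{H}{n-1}u$ used above, so that no extra conditions are hiding in the off-diagonal components. Both are routine once stated carefully, so the proof will be short.
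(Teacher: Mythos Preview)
Your proposal is correct and follows essentially the same route as the paper: reduce the tensorial boundary condition to the scalar equation $u_\nu = \tfrac{H}{n-1}u$ via umbilicity, then test it against the basis $\{1,x_1,\ldots,x_n\}$ of possible static potentials supplied by the preceding proposition. The only cosmetic difference is that the paper checks each basis function individually while you solve for the coefficients of a general linear combination; the content is identical, and your remarks about the sign of $h$ flipping with $\nu$ (so that interior and exterior give the same kernel) are exactly the point the paper handles by saying ``and similarly for the interior region.''
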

\begin{proof}
Consider the exterior region of $S_r$, then on $S_r$ we have for $i=1, \cdots, n$,
$$
(x_i)_\nu-\frac{H}{n-1} x_i = Dx_i \cdot \left(-\frac{x}{r} \right) + \frac1r x_i = -\frac{x_i}{r} + \frac{x_i}{r} = 0, 
$$
while
$$
(1)_\nu-\frac{H}{n-1} \cdot 1 = 0 + \frac1r  = \frac1r > 0.
$$
Thus, the exterior region of $S_r$ is a non-generic domain in $\mathbb{R}^n$ with $\operatorname{ker}\Phi^* = \operatorname{span}\{x_1, \cdots, x_n\}$, and similarly for the interior region of $S_r$.

Let $(\Omega, \Sigma)$ be the upper half-space, then for $1\le i\le n-1$,
$$
(x_i)_\nu-\frac{H}{n-1} x_i = \frac{\p x_i}{\p x_n} + 0 = 0,
$$
$$
\text{and}\quad (1)_\nu-\frac{H}{n-1} \cdot 1 = 0 + 0 = 0,
$$
while
$$
(x_n)_\nu-\frac{H}{n-1} x_n = 1 + 0 =1 > 0.
$$
Thus, the upper half-space is a non-generic domain in $\mathbb{R}^n$, and $\operatorname{ker}\Phi^* = \operatorname{span}\{1, x_1, \cdots, x_{n-1}\}.$
\end{proof}

As a summary, we have the following classification result:
\begin{thm}[Classification of simple non-generic domains in $\mathbb{R}^n$]
There are two types of boundaries of non-generic domains in $\mathbb{R}^n$: (i) spheres,  (ii) hyperplanes.
\end{thm}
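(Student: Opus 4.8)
The plan is to combine the structural results already established in the excerpt with the classical classification of totally umbilic hypersurfaces in Euclidean space. The key observation is that Theorem \ref{const mean} forces the boundary $\Sigma$ of any non-generic domain to be umbilic; and by the cited exercise in do Carmo, a connected totally umbilic hypersurface in $\mathbb{R}^n$ is either contained in a round sphere or contained in an affine hyperplane. So the only candidates for $\Sigma$ are (pieces of) spheres and hyperplanes, and the content of the theorem is that both possibilities are actually realized.

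First I would recall that by Theorem \ref{const mean}, a non-generic domain $(\Omega,\Sigma)$ has umbilic boundary with constant mean curvature; since we are in $\mathbb{R}^n$ there is no further constraint coming from constancy of the ambient scalar curvature. Then I would invoke \cite[p.~183 Exercise 6c]{dC} to conclude that each connected component of $\Sigma$ lies in a sphere or a hyperplane. For the converse — that such regions genuinely support static potentials — I would appeal directly to the two propositions just proved: the interior and exterior of a sphere $S_r$ are non-generic with $\operatorname{ker}\Phi^* = \operatorname{span}\{x_1,\dots,x_n\}$, and the half-space is non-generic with $\operatorname{ker}\Phi^* = \operatorname{span}\{1,x_1,\dots,x_{n-1}\}$. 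Since we are only classifying \emph{simple} domains (one boundary component), these examples exhaust the list, and the theorem follows.

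The one point that deserves care is ruling out the \emph{existence} of a static potential a priori rather than merely exhibiting one: the classification of umbilic hypersurfaces only tells us what $\Sigma$ can look like, not that every region bounded by such a $\Sigma$ is non-generic. But this is handled by the explicit computations in the preceding propositions, where for each geometry one checks that the coordinate functions (and, in the hyperplane case, the constant function) satisfy the boundary condition $u_\nu \hat{g} = u h$ while the interior equation $L^*u = 0$ is automatic for affine functions. I would also note, using the proposition characterizing possible static potentials on domains in $\mathbb{R}^n$ as $\operatorname{span}\{1,x_1,\dots,x_n\}$ together with Lemma \ref{no kernel} (or the injectivity of the one-jet map), that the listed kernels are exactly the full spaces of static potentials, not merely subspaces — this pins down the dimension count $(n$ for spheres, $n$ for hyperplanes$)$ and confirms the upper bound $\operatorname{dim}\operatorname{ker}\Phi^* \le n$ is sharp, as remarked after the dimension proposition.

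The main obstacle, such as it is, is essentially bookkeeping: one must be careful to state that the theorem concerns simple domains, so that exactly one umbilic component is present, and to observe that a hyperplane through the origin versus a sphere centered anywhere are the only genuinely distinct cases up to the isometries of $\mathbb{R}^n$ (translations and rotations), which act transitively enough to normalize an arbitrary sphere to be centered at the origin and an arbitrary hyperplane to be $\{x_n = 0\}$. No hard analysis is required beyond what the earlier propositions already supply; the proof is a short synthesis of the umbilicity constraint, the do Carmo classification, and the worked examples.
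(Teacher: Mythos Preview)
Your proposal is correct and follows essentially the same approach as the paper: the theorem is stated there as a summary of the preceding propositions, combining the umbilicity constraint from Theorem~\ref{const mean}, the do Carmo classification of umbilic hypersurfaces in $\mathbb{R}^n$, and the explicit verification that spheres and hyperplanes bound non-generic domains. Your write-up is in fact more explicit than the paper's, which simply presents the theorem as a corollary without a separate proof environment.
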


In 2016, Almaraz-Barbosa-de Lima \cite{A-B-dL} had the following conjecture of a positive mass theorem for asymptotically flat manifolds with a non-compact boundary, and they were able to prove the case when either $3 \le n \le 7$ or $n \ge 3$ and $M$ is spin.
\begin{conj}[Almaraz-Barbosa-de Lima]
If $(M, g)$ is asymptotically flat and satisfies $R \ge 0$ and $H \ge 0$, then $m(M,g) \ge 0$, with the equality occurring if and only if $(M, g)$ is isometric to $(\mathbb{R}^n_+, \delta)$.
\end{conj}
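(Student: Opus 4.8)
The plan is to establish the inequality and the rigidity separately, and within the inequality to split into the two regimes in which the statement is presently accessible: spin manifolds in all dimensions $n\ge 3$, via a Witten-type spinor argument; and $3\le n\le 7$, via a doubling construction that reduces matters to the classical positive mass theorem (equivalently, a direct free-boundary minimal hypersurface descent in the spirit of Schoen--Yau, with the area-minimizing slices meeting $\Sigma$ orthogonally).

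\emph{The spin case.} Extend the spin structure across $\Sigma$ and consider the Dirac operator $D$ on $M$. One solves $D\psi=0$ with $\psi$ asymptotic at infinity to a fixed constant spinor $\psi_0$ of unit length, imposing on $\Sigma$ a local elliptic boundary condition built from Clifford multiplication by the unit normal $\nu$ (a chirality/MIT-bag type condition). The point of this boundary condition is that in the integrated Lichnerowicz--Weitzenb\"ock identity
\[
\int_M\Bigl(|\nabla\psi|^2+\tfrac14 R\,|\psi|^2\Bigr)\,d\mu_g \;+\; \tfrac12\int_\Sigma H\,|\psi|^2\,d\sigma_g \;=\; c_n\,m(M,g),
\]
with $c_n>0$ and the right-hand side extracted from the $O(r^{1-n})$ part of $\psi$ near infinity, the boundary term has a definite sign precisely because $H\ge 0$; hence $m(M,g)\ge 0$. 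If $m(M,g)=0$, then $\nabla\psi\equiv 0$, $R\equiv 0$, and $H\equiv 0$; running this for a maximal family of constant $\psi_0$ produces a full complement of parallel spinors, so $(M,g)$ is flat, compatibility of the boundary condition with the parallel spinors forces $\Sigma$ totally geodesic, and asymptotic flatness upgrades this to $(M,g)\cong(\mathbb{R}^n_+,\delta)$.

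\emph{The range $3\le n\le 7$.} Double $M$ across $\Sigma$ to form $\widehat M=M\cup_\Sigma M$. Since $\Sigma$ is asymptotic to a hyperplane, $\widehat M$ is asymptotically flat with a single end, and $m(\widehat M)=2\,m(M,g)$ for the standard normalization; its metric is Lipschitz across the seam $\Sigma$, and because the two mean curvatures induced along the seam coincide with $H\ge 0$, the scalar curvature of $\widehat M$ is nonnegative in the distributional sense. A corner-smoothing argument (in the spirit of Miao) then yields, for each $\varepsilon>0$, a smooth asymptotically flat metric on $\widehat M$ with $R\ge 0$ everywhere and mass at most $2\,m(M,g)+\varepsilon$; the classical Schoen--Yau positive mass theorem gives $2\,m(M,g)+\varepsilon\ge 0$, and letting $\varepsilon\to 0$ yields $m(M,g)\ge 0$. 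For rigidity one argues by compactness: when $m(M,g)=0$ the smoothed metrics converge to a flat metric on $\widehat M$ invariant under reflection across $\Sigma$, which forces $(M,g)\cong(\mathbb{R}^n_+,\delta)$ --- consistently with Section~\ref{sec4}, the rigid model is itself a simple non-generic domain.

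\emph{Main obstacles.} For the inequality, the delicate technical points are the asymptotics near infinity of $\Sigma$ itself, an $(n-1)$-dimensional asymptotically flat slice sitting inside the $n$-dimensional end, which one needs both to make the doubling genuinely asymptotically flat and to justify the boundary expansions in the spinor computation, and --- in the doubling approach --- uniform control of the geometry near the non-compact seam during smoothing. The harder issue is the rigidity in the non-spin case: the mollification perturbs the metric, so one cannot invoke the classical equality case directly and must instead run a careful limiting argument. Finally, for $n\ge 8$ in the non-spin setting, a complete proof of the conjecture as stated requires importing the high-dimensional minimal-hypersurface (or $\mu$-bubble / Lohkamp) machinery and extending the relevant free-boundary regularity theory beyond dimension $7$; this is where the problem genuinely remains open-ended.
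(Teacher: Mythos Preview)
The paper does not contain a proof of this statement. It is recorded there as a \emph{conjecture}, attributed to Almaraz--Barbosa--de Lima \cite{A-B-dL}, with the remark that those authors established it in the regimes $3\le n\le 7$ and $n\ge 3$ with $M$ spin; the paper then quotes a separate rigidity proposition from the same source and uses it only to explain, heuristically, why the half-space $(\mathbb{R}^n_+,\delta)$ must be a non-generic domain. There is therefore no ``paper's own proof'' against which to compare your proposal.

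That said, your outline is a fair high-level summary of the strategy of \cite{A-B-dL}: a Witten-type argument with a chirality boundary condition in the spin case, and a reduction to the classical positive mass theorem in low dimensions. A couple of points deserve care. First, in \cite{A-B-dL} the non-spin argument for $3\le n\le 7$ is not carried out by doubling across $\Sigma$ and invoking Miao's corner smoothing; rather, it proceeds by a direct free-boundary adaptation of the Schoen--Yau minimal-hypersurface descent, with the barrier/area-minimizing hypersurfaces meeting $\Sigma$ orthogonally. Your doubling sketch is plausible in spirit, but the seam $\Sigma$ is non-compact and asymptotic to a hyperplane, so Miao's smoothing (which is formulated for compact hypersurfaces) does not apply off the shelf; this is exactly the ``uniform control near the non-compact seam'' issue you flag, and it is a genuine gap rather than a technicality. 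Second, your displayed Lichnerowicz identity omits the $|D\psi|^2$ term (which vanishes only after solving $D\psi=0$) and hides the nontrivial work of showing that the chosen boundary condition makes the boundary integrand reduce to $\tfrac12 H|\psi|^2$ with no indefinite remainder; both are handled in \cite{A-B-dL}, but your sketch should acknowledge them as the substantive steps they are.
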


They also had the following proposition regarding the rigidity part:
\begin{prop}[Almaraz-Barbosa-de Lima]
Let $(M,g)$ be asymptotically flat with $3 \le n \le 7$ or $n \ge 3$ and $M$ spin, and satisfy $R \ge 0$ and $H \ge 0$. Assume further that there exists a compact subset $K \subset M$ such that $(M \setminus K, g)$ is isometric to $(\mathbb{R}^n_+\setminus \overline{B_1^+}(0), \delta)$. Then $(M, g)$ is isometric to $(\mathbb{R}^n_+, \delta)$.
\end{prop}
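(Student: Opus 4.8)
The plan is to argue by contradiction while using that the hypotheses already pin down the mass. Since $g$ coincides with $\delta$ on $M\setminus K$, every ADM mass integral is evaluated over a half-sphere lying in the flat region, so $m(M,g)=0$. Suppose $(M,g)$ is not isometric to $(\R^n_+,\delta)$. If $g$ were flat everywhere, a standard developing-map argument (the flat end forces the developing map into $\{x_n\ge 0\}$, and completeness identifies the image with all of $\R^n_+$) would give $(M,g)\cong(\R^n_+,\delta)$; hence $g$ is non-flat, and since a metric of constant sectional curvature cannot be flat near infinity, Schur's lemma produces a point $p$ (interior or on $\partial M$) at which $g$ is not pointwise isotropic.

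First I would localize a deformation near such a $p$. Because the sectional curvature at $p$ is not constant on $2$-planes, the curvature correction to the second fundamental form of small geodesic spheres is not pure trace, so a sufficiently small geodesic (half-)ball $\Omega_0$ about $p$ has non-umbilic boundary; by Theorem \ref{const mean} (and the remark following it) this forces $(\Omega_0,\partial\Omega_0)$ to fail to be a non-generic domain, i.e.\ $\ker\Phi^*=0$ on it. The localized surjectivity theorem of \cite{Sheng} (the boundary analogue of \cite{C-V}) then applies and yields a metric $\tilde g$ on $M$, equal to $g$ outside $\Omega_0$, with $R_{\tilde g}\ge 0$ in the interior, $H_{\tilde g}\ge 0$ on $\partial M$, and $R_{\tilde g}>0$ on a nonempty open set: one prescribes $(R_{\tilde g},H_{\tilde g})=(R_g+\psi,\,H_g)$ for a small nonnegative bump function $\psi\not\equiv 0$ supported in $\Omega_0$. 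In particular $\tilde g$ is still exactly $\delta$ near infinity, so $m(\tilde g)=0$.

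Then I would apply the standard conformal correction. Solve for $\phi>0$ with $\phi\to 1$ at infinity that makes $\bar g=\phi^{4/(n-2)}\tilde g$ scalar-flat in the interior and minimal on the boundary --- a conformal-Laplacian equation with a Robin boundary condition, solvable by the usual weighted-space methods given asymptotic flatness and $R_{\tilde g},H_{\tilde g}\ge 0$. The maximum principle and the Hopf lemma give $0<\phi\le 1$, and $\phi\not\equiv 1$ precisely because $R_{\tilde g}$ (or $H_{\tilde g}$) is positive somewhere; hence the expansion $\phi=1-A\,r^{2-n}+o(r^{2-n})$ has $A>0$, and $\bar g$ is asymptotically flat with $R_{\bar g}\equiv 0$, $H_{\bar g}\equiv 0$ and $m(\bar g)<m(\tilde g)=0$. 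This contradicts the positive mass inequality of Almaraz--Barbosa--de Lima (valid when $3\le n\le 7$, and when $n\ge 3$ with $M$ spin), so $(M,g)$ must be isometric to $(\R^n_+,\delta)$.

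The step I expect to be the main obstacle is the deformation: one has to be certain a genuinely generic (non-static) subdomain $\Omega_0$ can be placed where it is needed, and that the perturbation keeps $R\ge 0$ and $H\ge 0$ everywhere while producing strict positivity of one of them and leaving the flat end untouched. This is exactly where the structural properties of non-generic domains established in Sections \ref{sec2}--\ref{sec3} enter --- constancy of scalar curvature, umbilicity of the boundary, and $\operatorname{Ric}_{i\nu}=0$ on $\Sigma$: these are the necessary conditions that fail at a point of non-flatness, and it is their failure that makes the deformation freedom of \cite{Sheng} available. A secondary, routine technical matter is the existence and precise asymptotics of the conformal factor $\phi$ on a manifold with non-compact boundary.
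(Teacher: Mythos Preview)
This proposition is quoted from \cite{A-B-dL}; the present paper does not supply a proof of it. On the contrary, the paragraph immediately following the statement runs the logic in the opposite direction: it \emph{uses} the Almaraz--Barbosa--de Lima rigidity to explain why $(\mathbb{R}^n_+,\delta)$ must be a non-generic domain (were it generic, a localized deformation would contradict the proposition). So there is no proof in the paper to compare your attempt against.

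Your outline is also not the argument of \cite{A-B-dL}. The rigidity there follows the Schoen--Yau scheme: from $g=\delta$ outside $K$ one has $m(g)=0$; a single conformal change then forces $R\equiv0$ and $H\equiv0$; after that one perturbs $g$ \emph{directly} by a cut-off of $-\mathrm{Ric}$ (and a boundary analogue for the second fundamental form), conformally corrects, and obtains negative mass unless $\mathrm{Ric}\equiv0$ and $h\equiv0$. No Corvino-type localized deformation theorem is invoked.

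Your route through localized deformations has a genuine gap at exactly the step you flagged. Theorem~\ref{const mean} and the non-umbilic criterion concern $\ker\Phi^*$ for the pair $(\Omega_0,\partial\Omega_0)$ viewed as a manifold with boundary in its own right; the associated surjectivity lets you prescribe $(R,H|_{\partial\Omega_0})$, but it does \emph{not} fix $g'$ along $\partial\Omega_0$, so the deformed metric does not glue to $g$ on $M\setminus\Omega_0$. What you actually need for a perturbation compactly supported in an interior ball---so that the flat end and $H|_{\partial M}$ are untouched---is Corvino's condition $\ker L^*|_{\Omega_0}=0$, and non-umbilicity of $\partial\Omega_0$ does not give this. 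Concretely, if after the conformal step $g$ happens to be Ricci-flat (which for $n\ge4$ does not imply flat), then $L^*1=-(\Delta 1)g+\mathrm{Hess}\,1-\mathrm{Ric}=0$ on every subdomain, so no interior ball is generic in Corvino's sense; your scheme produces no $\tilde g$ with $R_{\tilde g}>0$ somewhere, and the argument stalls precisely in the case it must cover. The standard proof bypasses this by the explicit $-\mathrm{Ric}$ perturbation rather than by an abstract surjectivity theorem.
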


Thanks to the above proposition, we can now better understand why the half-space is a non-generic domain in $\mathbb{R}^n$. Suppose not, then the half-space with $R(g)=0$ and $H(g)=0$ satisfies generic conditions. This means we would be able to perform a localized deformation \cite[Theorem 2.9]{Sheng} on a compact region $K\subset\mathbb{R}^n_+$. More precisely, for any $S\ge0$ and $H'\ge0$ small enough such that $S$ is supported in $K$ and $H'$ is supported in $K\cap\{x_n=0\}$, there is a metric $g'$ with $R(g') = S$ in $K$,  $H(g') = H'$ in $K\cap\{x_n=0\}$, and $g' \equiv g$ outside $K$. However, this is a contradiction to the rigidity theorem.

There is another generalized version of the positive mass theorem \cite{M, S-T} that will imply the following rigidity theorem for the unit ball in $\mathbb{R}^n$:
\begin{prop}[Hang-Wang \cite{H-W}]
Let $(M,g)$ be an $n$-dimensional compact Riemannian manifold with boundary and the scalar curvature $R \ge 0$. The boundary is isometric to the standard sphere $\mathbb{S}^{n-1}$ and has mean curvature $n - 1$. Then $(M,g)$ is isometric to the unit ball in $\mathbb{R}^n$. (If $n > 7$, we also assume $M$ is spin.)
\end{prop}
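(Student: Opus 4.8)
The plan is to reduce the statement to the positive mass theorem for asymptotically flat manifolds whose metric is smooth away from a hypersurface across which it is only Lipschitz --- the ``corner'' version of Miao and Shi--Tam \cite{M, S-T}. First I would fix an isometry identifying $\Sigma = \partial M$ with the unit sphere $\mathbb{S}^{n-1} = \partial(\mathbb{R}^n \setminus B_1(0))$ and glue along it to form the complete manifold without boundary
\[
\widehat{M} \;=\; M \;\cup_{\Sigma}\; \bigl(\mathbb{R}^n \setminus B_1(0)\bigr),
\]
endowed with the metric $\widehat{g}$ that equals $g$ on $M$ and the flat metric $\delta$ on $\mathbb{R}^n \setminus B_1(0)$. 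Since the two induced metrics on $\Sigma$ agree, $\widehat{g}$ is a continuous metric that is smooth up to $\Sigma$ from either side, and $(\widehat{M},\widehat{g})$ has a single end which is \emph{exactly} Euclidean; in particular its ADM mass vanishes.

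Next I would verify the mean-curvature matching along $\Sigma$. Let $\nu$ be the unit normal to $\Sigma$ in $\widehat{M}$ pointing toward the Euclidean end, i.e. the outward normal of $M$. With respect to $\nu$, the mean curvature of $\Sigma$ from the $M$-side equals $H = n-1$ by hypothesis, while from the flat side $\nu$ points away from the origin and the mean curvature of the unit sphere there is again $n-1$; the two agree, which is the borderline case, so the distributional scalar curvature of $\widehat{g}$ has no singular part on $\Sigma$ and is simply the $L^\infty_{\mathrm{loc}}$ function equal to $R_g \ge 0$ on $M$ and to $0$ on $\mathbb{R}^n \setminus B_1(0)$. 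When $n > 7$ the spin hypothesis on $M$ transfers to $\widehat{M}$, since $\mathbb{R}^n \setminus B_1(0)$ is spin and the spin structures glue across $\mathbb{S}^{n-1}$ (unique for $n \ge 4$). The corner positive mass theorem \cite{M, S-T} then gives $m(\widehat{M},\widehat{g}) \ge 0$; but we already saw $m(\widehat{M},\widehat{g}) = 0$, so its equality case forces $\widehat{g}$ to be smooth across $\Sigma$ and $(\widehat{M},\widehat{g})$ to be isometric to $(\mathbb{R}^n,\delta)$. Hence $g$ is flat on $M$, and $M$ is the compact region bounded by the round unit sphere $\Sigma$ inside the flat $\mathbb{R}^n$, i.e. $(M,g)$ is isometric to the unit ball.

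The hard part will be the equality case of the corner positive mass theorem. The bare inequality $m \ge 0$ for metrics with a Lipschitz corner of matching mean curvature is by now routine (mollify $\widehat{g}$ near $\Sigma$, conformally deform to restore nonnegative scalar curvature up to an arbitrarily small error, apply the smooth positive mass theorem, and let the mollification parameter tend to $0$). Extracting genuine rigidity from $m = 0$ is more delicate: one must rule out a ``real'' $C^0$ corner and prove global flatness, which is exactly the rigidity content of \cite{M} and \cite{S-T} --- in dimension $3$, for instance, Shi--Tam's monotonicity of the Brown--York quasi-local mass shows $m = 0$ forces the large coordinate spheres to be exactly round with the expected areas, and flatness then propagates inward through $\Sigma$. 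Granting that input, the argument closes as above.
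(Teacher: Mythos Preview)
Your sketch is correct and is exactly the argument the paper points to: the proposition is quoted from Hang--Wang \cite{H-W} without proof, preceded only by the remark that it follows from the corner positive mass theorem of Miao and Shi--Tam \cite{M, S-T}, which is precisely the gluing-and-rigidity mechanism you outline. Your identification of the equality case of the corner PMT as the substantive input is accurate, and the mean-curvature matching you check ($H_- = H_+ = n-1$ with respect to the common normal toward the end) is the right borderline condition.
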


Notice that it can also give a good explanation of why the unit ball is a non-generic domain in $\mathbb{R}^n$.

\subsection{The unit sphere $\mathbb{S}^n$}
Consider the standard metric $g_{\mathbb{S}^n}$ on the unit sphere $\mathbb{S}^n$. In this case, the sectional curvature $K = 1$, the Ricci curvature $\operatorname{Ric} = (n-1)g_{\mathbb{S}^n}$, and the scalar curvature $R = n(n-1)$. 

First of all, notice that we have the following result:
\begin{prop}
The linear combinations of the coordinates $x_1, \cdots, x_{n+1}$ are the only possible static potentials on non-generic domains in $\mathbb{S}^n$.
\end{prop}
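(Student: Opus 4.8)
The plan is to reduce the interior equation $L^*u = 0$ on $\mathbb{S}^n$ to an Obata-type equation and then to pin down its solution space using the dimension bound recorded before the propositions above together with $n+1$ explicit solutions.

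First I would take the trace: by (\ref{generic trace}), since $R = n(n-1)$ on $\mathbb{S}^n$, every possible static potential $u$ satisfies $\Delta u = -n u$ in $\Omega$. Feeding this back into $L^*u = -(\Delta u)g_{\mathbb{S}^n} + \operatorname{Hess}u - u\operatorname{Ric} = 0$ and using $\operatorname{Ric} = (n-1)g_{\mathbb{S}^n}$ gives, pointwise on $\Omega$,
\[
\operatorname{Hess}u = (\Delta u)\,g_{\mathbb{S}^n} + u\operatorname{Ric} = \big(-n + (n-1)\big)u\,g_{\mathbb{S}^n} = -u\,g_{\mathbb{S}^n}.
\]
This is a purely local identity, so it is valid on an arbitrary connected domain $\Omega \subseteq \mathbb{S}^n$, not just on the whole sphere.

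Next I would exhibit the solutions. It is a standard fact that the restriction to $\mathbb{S}^n \subset \mathbb{R}^{n+1}$ of each linear coordinate $x_i$ satisfies $\operatorname{Hess}_{\mathbb{S}^n} x_i = -x_i\,g_{\mathbb{S}^n}$ and $\Delta_{\mathbb{S}^n} x_i = -n x_i$, whence $L^*(x_i) = -(\Delta x_i)g_{\mathbb{S}^n} + \operatorname{Hess}x_i - x_i\operatorname{Ric} = n x_i g_{\mathbb{S}^n} - x_i g_{\mathbb{S}^n} - (n-1)x_i g_{\mathbb{S}^n} = 0$. So $x_1,\dots,x_{n+1}$ are $n+1$ possible static potentials. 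Their one-jets at the north pole $p=(0,\dots,0,1)$ are $(x_{n+1}(p), dx_{n+1}|_p) = (1,0)$ and $(x_i(p), dx_i|_p) = (0, dx_i|_p)$ for $i \le n$, where $dx_1|_p,\dots,dx_n|_p$ is a basis of $T_p^*\mathbb{S}^n$; these $n+1$ one-jets are linearly independent in $\mathbb{R}\times T_p^*\mathbb{S}^n$, so $x_1,\dots,x_{n+1}$ are linearly independent even after restriction to $\Omega$.

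Finally I would invoke the fact noted before the first proposition: $L^*u=0$ reduces to a second-order linear ODE along geodesics, so on the connected domain $\Omega$ the map $u \mapsto (u(p), du(p))$ is injective on possible static potentials, whence that space has dimension at most $n+1$. Since $\operatorname{span}\{x_1,\dots,x_{n+1}\}$ already realizes dimension $n+1$ inside it, it is the whole space, which is the claim. The only point requiring care is resisting the temptation to argue via global eigenfunction theory on $\mathbb{S}^n$ (which would not apply to a proper $\Omega$); the one-jet injectivity together with the local Hessian computation for the $x_i$ is exactly what makes the argument go through on an arbitrary domain, and it is a routine, not a deep, step.
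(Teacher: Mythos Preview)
Your argument is correct and follows essentially the same route as the paper: exhibit the coordinate functions $x_1,\dots,x_{n+1}$ as solutions of $L^*u=0$ and then invoke the dimension bound $\dim\ker L^*\le n+1$ to conclude. The only imprecision is that you evaluate one-jets at the north pole, which need not lie in $\Omega$; simply pick any $p\in\Omega$ instead (or note that a nontrivial linear function on $\mathbb{R}^{n+1}$ cannot vanish on an open subset of $\mathbb{S}^n$), and the linear independence goes through unchanged.
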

\begin{proof}
For $i=1, \cdots, n+1$,
$$
\operatorname{Hess}_{g_{\mathbb{S}^n}} x_{i}+x_{i} g_{\mathbb{S}^n}=0.
$$
Thus,
$$
\begin{aligned}
L^*(x_i) 	& = -\left(\Delta_{g_{\mathbb{S}^n}} x_i\right) g_{\mathbb{S}^n}+\operatorname{Hess}_{g_{\mathbb{S}^n}}x_i - (n-1) x_i {g_{\mathbb{S}^n}}\\
	& = -\left(\Delta_{g_{\mathbb{S}^n}} x_i\right) g_{\mathbb{S}^n} - x_{i} g_{\mathbb{S}^n} - (n-1) x_i {g_{\mathbb{S}^n}}\\
	& = -\left(\Delta_{g_{\mathbb{S}^n}} x_i + nx_i\right) g_{\mathbb{S}^n}\\
	& = 0.
\end{aligned}
$$
On the other hand, the space of possible static potentials has dimension at most $n + 1$ \cite[Corollary 2.4]{C}, so the result follows immediately.
\end{proof}

As for totally umbilical hypersurfaces $\Sigma\subset\mathbb{S}^n$, they can only be spherical caps. This follows from the classification in $\mathbb{R}^n$ (spheres or planes) together with stereographic projection. Cruz and Vit\'{o}rio \cite{C-V} found that the upper hemisphere with its boundary is a non-generic domain in $\mathbb{S}^n$, and later Ho and Huang \cite{H-H} determined the space of static potentials. We will do likewise for all spherical caps.

\begin{thm}\label{sphere}
Let $(\Omega, \Sigma)$ be an $n$-dimensional spherical cap equipped with the standard metric $g_{\mathbb{S}^n}$. Then $(\Omega, \Sigma)$ is a non-generic domain in $\mathbb{S}^n$. Moreover, if the north pole $N$ is the center of the spherical cap, then
$$\operatorname{ker}\Phi^* = \operatorname{span}\{x_1, \cdots, x_n\}.$$
In fact, spherical caps are the only simple non-generic domains in $\mathbb{S}^n$.
\end{thm}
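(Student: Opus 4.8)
The plan is to establish the two assertions of Theorem~\ref{sphere} separately: first that every spherical cap centered at the north pole is a non-generic domain with the stated kernel, and then that no other simple non-generic domain exists in $\mathbb{S}^n$. For the first part, I would use the preceding proposition, which tells us that any possible static potential lies in $\operatorname{span}\{x_1,\dots,x_{n+1}\}$, so it suffices to check which of these satisfy the boundary condition $u_\nu\hat g = u h$ on $\Sigma$. Choosing coordinates so that the cap is $\{x_{n+1}\ge c\}$ for some constant $c\in(-1,1)$, the boundary $\Sigma$ is a round $(n-1)$-sphere, umbilic with a computable mean curvature $H$; since on a space form umbilicity already forces constancy of $H$, the boundary condition reduces to the scalar condition $u_\nu = \tfrac{H}{n-1}u$ from (\ref{generic trace}). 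I would then compute, for each coordinate function $x_i$ restricted to $\Sigma$, its normal derivative along $\nu$ (the unit vector tangent to $\mathbb{S}^n$ pointing in the $x_{n+1}$-increasing direction on the complement), and verify that $x_1,\dots,x_n$ satisfy the relation while $x_{n+1}$ (and hence the constant, via the trace equation applied to $x_{n+1}$ if needed) does not, except in the hemisphere case; the point is simply that $x_1,\dots,x_n$ restrict to first spherical harmonics on $\Sigma$ with the correct Steklov-type ratio, mirroring the Euclidean sphere computation already carried out in the previous subsection. This pins down $\operatorname{ker}\Phi^* = \operatorname{span}\{x_1,\dots,x_n\}$, of dimension $n$, consistent with the stated upper bound.

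For the converse, suppose $(\Omega,\Sigma)$ is any simple non-generic domain in $\mathbb{S}^n$. By Theorem~\ref{const mean}, $\Sigma$ is umbilic and has constant mean curvature, and by the classification of totally umbilic hypersurfaces in $\mathbb{S}^n$ recalled just above, $\Sigma$ must be a spherical cap boundary, i.e.\ a round $(n-1)$-sphere sitting as a "slice" $\{x_{n+1}=c\}$ after a rotation. Since $\Sigma$ is connected and separates $\mathbb{S}^n$ into exactly two pieces (both of which are spherical caps), and $\Omega$ is one of these two pieces, $\Omega$ is itself a spherical cap. Then the first part of the theorem applies and shows this $\Omega$ is indeed non-generic, completing the classification. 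The only subtlety is to make sure that a "simple" domain genuinely has $\Sigma$ connected and that a connected umbilic hypersurface in $\mathbb{S}^n$ is automatically one of these slices rather than, say, an open piece of one — this is handled by completeness and compactness of $\Omega$ (so $\Sigma=\partial\Omega$ is closed) together with the cited exercise from do Carmo.

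The main obstacle is the boundary computation in the first part: one must carry out the normal-derivative calculation carefully in the chosen coordinates, being attentive to the orientation of $\nu$ (inward versus outward, which flips the sign of $h$ and hence of the relation to be checked) and to the dependence on the cap parameter $c$, so that the identity $(x_i)_\nu = \tfrac{H}{n-1}x_i$ comes out for $i\le n$ uniformly in $c$. A clean way to organize this is to use the ambient picture: $x_i$ is a linear function on $\mathbb{R}^{n+1}$ restricted to $\mathbb{S}^n$, its $\mathbb{S}^n$-gradient at a point $p\in\Sigma$ is the tangential projection of $e_i$, and $\nu$ at $p$ is the unit vector in $T_p\mathbb{S}^n$ proportional to the tangential projection of $e_{n+1}$; then $(x_i)_\nu$ is an inner product that evaluates to a constant multiple of $x_i(p)$ precisely when $i\neq n+1$, and the multiple matches $\tfrac{H}{n-1}$ by the standard formula for the mean curvature of a geodesic sphere in $\mathbb{S}^n$. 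Once this computation is in place, everything else is bookkeeping with the dimension bound and the umbilic hypersurface classification.
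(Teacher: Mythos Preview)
Your proposal is correct and follows essentially the same approach as the paper: identify the possible static potentials as $\operatorname{span}\{x_1,\dots,x_{n+1}\}$, check the scalar boundary condition $u_\nu=\tfrac{H}{n-1}u$ for each coordinate function, and invoke the classification of totally umbilic hypersurfaces in $\mathbb{S}^n$ for the converse. The paper carries out the boundary computation in explicit polar coordinates (writing $\Sigma=\{x_{n+1}=\cos\theta\}$, $\nu=\partial_\theta$, $H=(n-1)\cot\theta$, and evaluating $(x_i)_\nu-\tfrac{H}{n-1}x_i$ directly), while you propose the equivalent ambient $\mathbb{R}^{n+1}$ picture via tangential projections of the $e_i$; both yield the same identities.

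One small correction: your parenthetical ``except in the hemisphere case'' is off. Even when $c=0$ (so $H=0$), the paper's computation gives $(x_{n+1})_\nu-\tfrac{H}{n-1}x_{n+1}=-\tfrac{1}{\sin\theta}=-1\neq 0$, so $x_{n+1}$ is excluded uniformly for every cap; and constants are never possible static potentials on $\mathbb{S}^n$ in the first place, since $L^*(1)=-(n-1)g_{\mathbb{S}^n}\neq 0$, so there is nothing further to rule out.
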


\begin{proof}
If the north pole $N$ is the center of the spherical cap, then $\Sigma$ may be parametrized as $\{\mathbf{x}\in\mathbb{S}^n: x_{n+1} = \operatorname{cos}\theta\}$, so that any point $\mathbf{x}\in\Sigma$ can be written as $\mathbf{x} = (\operatorname{sin}\theta\,\mathbf{\xi}, \operatorname{cos}\theta)$, where $\mathbf{\xi}\in\mathbb{S}^{n-1}$, and the induced metric on $\Sigma$ is $\hat{g} = \operatorname{sin}^2\theta \,g_{\mathbb{S}^{n-1}}$. In this case, the outward unit normal $\nu = \frac{\p}{\p \theta}$, and the mean curvature is
\begin{align*}
H & = \frac{\p}{\p \theta} \operatorname{ln}\sqrt{\operatorname{det}\hat{g}}\\
	& = \frac{\p}{\p \theta} \operatorname{ln}\sqrt{\left(\operatorname{sin}^2\theta\right)^{n-1}\operatorname{det}g_{\mathbb{S}^{n-1}}}\\
	& = \frac{\p}{\p \theta}\left( (n-1)\operatorname{ln}(\operatorname{sin}\theta)+ \operatorname{ln}\sqrt{\operatorname{det}g_{\mathbb{S}^{n-1}}}\right)\\
	& = (n-1)\frac{\operatorname{cos}\theta}{\operatorname{sin}\theta}.
\end{align*}
This means, for $i = 1, \cdots, n$,
$$
(x_i)_\nu - \frac{H}{n-1} x_i = \operatorname{cos}\theta\,\xi_i - \frac{\operatorname{cos}\theta}{\operatorname{sin}\theta} \operatorname{sin}\theta\,\xi_i = 0,
$$
while
$$
(x_{n+1})_\nu - \frac{H}{n-1} x_{n+1} = \frac{\p}{\p \theta}\operatorname{cos}\theta - \frac{\operatorname{cos}\theta}{\operatorname{sin}\theta}\operatorname{cos}\theta = - \frac1{\operatorname{sin}\theta}<0.
$$
Thus, $\operatorname{ker}\Phi^* = \operatorname{span}\{x_1, \cdots, x_n\}.$
\end{proof}


For spherical caps, the scalar curvature $R>0$; when $\theta$ varies between $0$ and $\pi$, the range of the mean curvature $H$ is the set of all real numbers $\mathbb{R}$.

We would like to note that non-generic domains in $\mathbb{S}^n$ are related to the famous Min-Oo’s Conjecture \cite{M-O}:
\begin{conj}[Min-Oo]
Let $(M,g)$ be an $n$-dimensional compact Riemannian manifold with boundary and the scalar curvature $R \ge n(n - 1)$. The boundary is isometric to the standard sphere $\mathbb{S}^{n-1}$ and is totally geodesic. Then $(M,g)$ is isometric to the hemisphere $\mathbb{S}^n_+$.
\end{conj}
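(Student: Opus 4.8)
The plan is to attempt a scalar-curvature rigidity argument in the spirit of the deformation theory reviewed above, taking as the point of departure the fact—just established in Theorem \ref{sphere}—that the round hemisphere $\mathbb{S}^n_+=\{x_{n+1}\ge 0\}$ is itself a non-generic domain, with totally geodesic boundary $\Sigma=\{x_{n+1}=0\}$ and $\operatorname{ker}\Phi^*=\operatorname{span}\{x_1,\dots,x_n\}$. Since $\mathbb{S}^n_+$ admits these nontrivial static potentials and $H=0$, its static structure is rigid in the infinitesimal sense: by the results above, one cannot increase scalar and mean curvature by any localized first-order deformation. The idea would be to read the hypotheses ``$R\ge n(n-1)$, $\Sigma$ isometric to $\mathbb{S}^{n-1}$ and totally geodesic'' as saying that $(M,g)$ sits on the boundary of the admissible scalar/mean-curvature data, with $\mathbb{S}^n_+$ the unique model realizing equality, and then to promote this infinitesimal obstruction to a genuine isometry.

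I would proceed in three steps. First, normalize: because $\Sigma$ is totally geodesic and isometric to the round sphere, the first and second fundamental forms of $\Sigma$ coincide with those of $\mathbb{S}^n_+$, pinning down the one-jet of $g$ along $\Sigma$. Second, set up an integral identity—either a Reilly-type formula applied to a solution of an overdetermined system modeled on $L^*u=0$, or a spinorial Weitzenb\"ock formula for a Killing spinor of $\mathbb{S}^n$ with a chirality boundary condition adapted to the totally geodesic boundary. Third, use $R\ge n(n-1)$ together with the vanishing of the boundary terms (forced by the totally geodesic and isometry hypotheses) to conclude that the bulk integrand is nonnegative with equality precisely on the round metric, and then extract rigidity by an Obata-type argument.

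The hard part is the second step, and here the difficulty is genuinely fatal. The integral formulas available for scalar curvature control only the trace of the curvature, whereas the desired conclusion is a statement about the full metric; the boundary terms one would need to sign involve the second fundamental form and the normal derivative of the test object, and under the stated hypotheses these do \emph{not} have a favorable sign beyond first order. This is not a shortcoming of the method but reflects the truth of the matter: Min-Oo's conjecture \emph{as stated is false} for $n\ge 3$. Brendle, Marques, and Neves produced metrics on $\mathbb{S}^n_+$ that agree with $g_{\mathbb{S}^n}$ near $\Sigma$—so $\Sigma$ stays isometric to $\mathbb{S}^{n-1}$ and totally geodesic—satisfy $R\ge n(n-1)$ with strict inequality on an open set, and are not isometric to the round hemisphere. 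Consequently, the non-generic (static) structure of $\mathbb{S}^n_+$ established above is exactly the infinitesimal obstruction to increasing scalar and mean curvature: it is a \emph{necessary} condition for rigidity but not a sufficient one, and no deformation or integral-formula argument can bridge the gap. Any correct rigidity statement must add hypotheses—on the principal curvatures of $\Sigma$, on the convexity or diameter of the domain, or on the interior geometry—as in the positive results for $\mathbb{R}^n$ and the ball discussed earlier.
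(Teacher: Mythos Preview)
Your assessment is correct and in fact agrees with the paper: the statement is presented there as a \emph{conjecture}, not as a theorem, and the paper offers no proof. Immediately after stating it, the paper remarks that Min-Oo's Conjecture has been verified in various special cases but that counterexamples were constructed by Brendle--Marques--Neves \cite{Brendle-M-N} and Corvino--Eichmair--Miao \cite{C-E-M}. So there is nothing to compare your attempt against; the paper's ``proof'' is simply the observation that the conjecture is false in general.

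Your diagnosis of \emph{why} the static/non-generic structure of $\mathbb{S}^n_+$ fails to yield rigidity is also on target: the kernel of $\Phi^*$ is exactly the first-order obstruction to increasing $(R,H)$, and the Brendle--Marques--Neves construction shows this infinitesimal obstruction does not propagate to a global one. This is precisely the connection the paper is drawing between non-generic domains and rigidity phenomena---the hemisphere is the cautionary example where non-genericity does \emph{not} imply rigidity, in contrast to the Euclidean ball and half-space cases discussed just before.
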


Min-Oo’s Conjecture has been verified in many special cases (see e.g. \cite{Brendle-M, E, H-W, H-W2, M-T}), however, counterexamples were also constructed \cite{Brendle-M-N, C-E-M}.

\subsection{The hyperbolic space $\mathbb{H}^n$}
Consider the hyperboloid model for the hyperbolic space $\mathbb{H}^n$. To be more precise, let us consider the Minkowski quadratic form defined on $\mathbb{R}^{1, n}$ by
$$
Q\left(x_{0}, x_{1}, \ldots, x_{n}\right)=-x_{0}^{2}+x_{1}^{2}+\cdots+x_{n}^{2}.
$$
Then the hyperbolic space is defined by
$$\mathbb{H}^n = \{\mathbf{x}\in\mathbb{R}^{1, n}: Q(\mathbf{x}) = -1, x_0>0\}$$
with induced metric $g=g_{\mathbb{H}^n}$. In this case, the sectional curvature $K = -1$, the Ricci curvature $\operatorname{Ric} = -(n-1)g$, and the scalar curvature $R = -n(n-1)$. 

As a part of the unit sphere in the Minkowski space, the hyperbolic space has many properties that are similar to the unit sphere $\mathbb{S}^n$ in the Euclidean space. 
\begin{prop}\label{hyperbolic static potential}
The possible static potentials on a non-generic domain in $\mathbb{H}^n$ lie in the span of the restrictions of the coordinates $x_0, x_1, \cdots, x_n$ of $\mathbb{R}^{1, n}$ to $\mathbb{H}^n$.
\end{prop}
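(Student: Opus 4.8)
The plan is to follow the template used for $\mathbb{S}^n$: exhibit $n+1$ explicit solutions of $L^*u=0$ on $\mathbb{H}^n$ and then appeal to the bound $\operatorname{dim}(\text{possible static potentials})\le n+1$ from \cite[Corollary 2.4]{C}. The only geometric input needed is a Hessian identity for the restrictions to $\mathbb{H}^n$ of the ambient linear coordinates of $\mathbb{R}^{1,n}$, which plays the role that $\operatorname{Hess}_{g_{\mathbb{S}^n}}x_i + x_i g_{\mathbb{S}^n}=0$ played on the sphere.

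First I would establish that, for each $i=0,1,\dots,n$, the restriction of $x_i$ to $\mathbb{H}^n$ satisfies $\operatorname{Hess}_{g_{\mathbb{H}^n}} x_i = x_i\, g_{\mathbb{H}^n}$, and hence $\Delta_{g_{\mathbb{H}^n}} x_i = n x_i$. To prove this, fix $v\in\mathbb{R}^{1,n}$ and let $f_v$ be the restriction to $\mathbb{H}^n$ of $\mathbf{x}\mapsto Q(\mathbf{x},v)$, where $Q(\cdot,\cdot)$ denotes the bilinear form polarizing $Q$; since $Q(\mathbf{x},e_0)=-x_0$ and $Q(\mathbf{x},e_j)=x_j$ for $j\ge1$, each coordinate function equals $\pm f_{e_i}$, so it suffices to show $\operatorname{Hess}f_v=f_v\,g_{\mathbb{H}^n}$. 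Writing $\bar\nabla$ for the flat connection of $\mathbb{R}^{1,n}$, the position vector $\mathbf{x}$ is a timelike unit normal along $\mathbb{H}^n$ with $Q(\mathbf{x},\mathbf{x})=-1$ and $\bar\nabla_X\mathbf{x}=X$. Decomposing $v=v^{\top}-f_v\,\mathbf{x}$ into tangential and normal parts gives $\operatorname{grad}_{g_{\mathbb{H}^n}} f_v=v^{\top}=v+f_v\,\mathbf{x}$; differentiating, $\bar\nabla_X v^{\top}=(Xf_v)\mathbf{x}+f_v X$, whose tangential part is $f_v X$, so $\operatorname{Hess}f_v(X,Y)=g_{\mathbb{H}^n}(f_v X,Y)=f_v\,g_{\mathbb{H}^n}(X,Y)$.

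With this identity and $\operatorname{Ric}=-(n-1)g_{\mathbb{H}^n}$ in hand,
$$
L^*(x_i)=-(\Delta x_i)g_{\mathbb{H}^n}+\operatorname{Hess}x_i-x_i\operatorname{Ric}=\bigl(-n+1+(n-1)\bigr)x_i\,g_{\mathbb{H}^n}=0,
$$
so each $x_i|_{\mathbb{H}^n}$ is a possible static potential. These $n+1$ functions are linearly independent: $\mathbb{H}^n$ is connected and spans $\mathbb{R}^{1,n}$, so no nontrivial linear combination of them vanishes on $\mathbb{H}^n$, and hence none vanishes on any domain $\Omega\subset\mathbb{H}^n$ with nonempty interior. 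Since \cite[Corollary 2.4]{C} bounds the dimension of the space of possible static potentials on $\Omega$ by $n+1$, these restrictions form a basis of that space, which is precisely the assertion.

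I do not expect a genuine obstacle here. The only points that require care are the signs in the Hessian identity — equivalently, checking that the shape operator of $\mathbb{H}^n\subset\mathbb{R}^{1,n}$ produces the factor $+x_i$ rather than $-x_i$, which is exactly where the hyperbolic computation diverges from the spherical one — and noting that Corvino's dimension bound, being obtained from a second-order linear ODE along geodesics, applies verbatim to an arbitrary domain in $\mathbb{H}^n$.
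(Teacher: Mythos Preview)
Your proof is correct and follows the same overall strategy as the paper: establish the Hessian identity $\operatorname{Hess}_{g_{\mathbb{H}^n}} x_i = x_i\, g_{\mathbb{H}^n}$, deduce $L^*(x_i)=0$, and then invoke the dimension bound $\dim\ker L^*\le n+1$ from \cite[Corollary 2.4]{C}.

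The difference lies entirely in how the Hessian identity is obtained. The paper parametrizes $\mathbb{H}^n$ as the graph $x_0=\sqrt{1+\|\mathbf{x}\|^2}$, writes out $g_{ij}=\delta_{ij}-x_ix_j/x_0^2$ and its inverse, computes the Christoffel symbols, and evaluates $\operatorname{Hess}_{ij}x_k$ and $\operatorname{Hess}_{ij}x_0$ term by term. Your argument is the intrinsic submanifold version: using that the position vector is a unit timelike normal with $\bar\nabla_X\mathbf{x}=X$, you decompose the constant vector field $v$ and read off the tangential part of $\bar\nabla_X v^\top$ directly. This is shorter, coordinate-free, and makes the parallel with the spherical case $\operatorname{Hess}_{g_{\mathbb{S}^n}}x_i=-x_i\,g_{\mathbb{S}^n}$ transparent (the sign flip coming from $Q(\mathbf{x},\mathbf{x})=-1$ versus $+1$). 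The paper's computation, on the other hand, yields the explicit formula $\Gamma_{ij}^k=-x_k g_{ij}$ as a byproduct, though it is not used elsewhere. Either way the remaining steps are identical.
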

\begin{proof}
Let us consider the hyperbolic space as a graph in $\mathbb{R}^{1, n}$. To be more precise, any point $(x_0, \mathbf{x}) = (x_0, x_{1}, \ldots, x_{n}) \in \mathbb{H}^n$ satisfies
$$
x_0 = \sqrt{1+\|\mathbf{x}\|^2},
$$
thus the hyperbolic space can be characterized as the image of the map $F: \mathbb{R}^{n}\to\mathbb{R}^{1, n}$ given by
$$
F(\mathbf{x})=(\sqrt{1+\|\mathbf{x}\|^2}, \mathbf{x}).
$$

Denote $e_i\in T\mathbb{H}^n \, (i = 1,\cdots, n)$ as 
$$
e_i = dF\left(\frac{\p}{\p x_i}\right) = \frac{x_i}{x_0}\frac{\p}{\p x_0} + \frac{\p}{\p x_i}.
$$
Then the induced metric is
$$
g_{ij} = \left<e_i, e_j\right> = \delta_{ij} - \frac{x_ix_j}{x_0^2}
\qquad
\text{and}
\qquad
g^{ij} = \delta^{ij} + x_ix_j.
$$
For $k = 1,\cdots, n$,
$$
\begin{aligned}
\operatorname{Hess}_{ij}x_k & = e_i(e_j x_k) - \Gamma_{ij}^l e_lx_k\\
	& = e_i(\delta_{jk}) - \Gamma_{ij}^l\delta_{lk}\\
	& = - \Gamma_{ij}^k\\
	& = -\frac{1}{2} g^{kl}\left(\frac{\partial}{\partial x_{j}} g_{l i}+\frac{\partial}{\partial x_{i}} g_{l j}-\frac{\partial}{\partial x_{l}} g_{i j}\right),
\end{aligned}
$$
and 
$$
\frac{\partial}{\partial x_{j}} g_{l i} = \frac{\partial}{\partial x_{j}}\left(\delta_{li} - \frac{x_lx_i}{x_0^2}\right) = -\frac{\delta_{jl}x_i + \delta_{ji}x_l}{x_0^2} + 2\frac{x_lx_ix_j}{x_0^4}.
$$
So
$$
\begin{aligned}
\operatorname{Hess}_{ij}x_k & = \frac{1}{2} g^{kl}\left(\frac{\delta_{jl}x_i + \delta_{ji}x_l}{x_0^2} - 2\frac{x_lx_ix_j}{x_0^4} +\frac{\delta_{il}x_j + \delta_{ji}x_l}{x_0^2} - 2\frac{x_lx_ix_j}{x_0^4}\right)  + \frac{1}{2} g^{kl}\left(-\frac{\delta_{jl}x_i + \delta_{li}x_j}{x_0^2} + 2\frac{x_lx_ix_j}{x_0^4}\right)\\
	& = \sum_l(\delta^{kl} + x_kx_l)\left(\frac{\delta_{ji}x_l}{x_0^2} - \frac{x_lx_ix_j}{x_0^4}\right)\\
	& = \frac{\delta_{ji}x_k}{x_0^2} + \frac{\sum_lx_k\delta_{ji}x_l^2}{x_0^2} - \frac{x_kx_ix_j}{x_0^4} - \frac{\sum_lx_kx_ix_jx^2_l}{x_0^4}\\
	& = x_k \delta_{ij} - \frac{x_kx_ix_j}{x_0^2}\\
	& = x_k g_{ij} = - \Gamma_{ij}^k,
\end{aligned}
$$
and
$$
\begin{aligned}
\operatorname{Hess}_{ij}x_0 & = e_i(e_j x_0) - \Gamma_{ij}^l e_lx_0\\
	& = e_i\left(\frac{x_j}{x_0}\right) - \Gamma_{ij}^l\frac{x_l}{x_0}\\
	& = \frac{\delta_{ij}x_0 - x_j\frac{x_i}{x_0}}{x_0^2} + \sum_l x_l g_{ij}\frac{x_l}{x_0}\\
	& = \frac1{x_0}g_{ij} + \frac1{x_0}g_{ij}\sum_l x^2_l\\
	& = x_0 g_{ij}.
\end{aligned}
$$
Thus for $i = 0, 1,\cdots, n$,
$$
\operatorname{Hess}_{g_{\mathbb{H}^n}}x_i = x_i g_{\mathbb{H}^n},
$$
and we have
\begin{equation*}
    L^*(x_i) = -\left(\Delta x_i\right) g + x_i g + (n-1)x_i g = \left(-\Delta x_i + nx_i\right) g = 0.
\end{equation*}
On the other hand, we know $\operatorname{dim} \operatorname{ker} L^{*} \le n+1$ in $H_{\mathrm{loc}}^{2}(\Omega)$ \cite[Corollary 2.4]{C}. This means the possible static potentials lie in the span of the restrictions of the coordinates $x_0, x_1, \cdots, x_n$ of $\mathbb{R}^{1, n}$ to $\mathbb{H}^n$.
\end{proof}


The totally umbilical hypersurfaces $\Sigma\subset\mathbb{H}^n$ are well-known \cite[pp.~177-185]{dC}, as we now recall. In the hyperboloid model, they are of the form $P\cap\mathbb{H}^n$, where $P$ is some affine hyperplane of $\mathbb{R}^{1, n}$. Alternatively, if we consider the upper half-space model, then $\Sigma$ must be a geodesic sphere, a horosphere, a hypersphere, or the intersection with $\mathbb{H}^{n}$ of hyperplanes of $\mathbb{R}^{n}$.

We proceed to check whether each of these umbilical hypersurfaces gives us a boundary of a non-generic domain in $\mathbb{H}^{n}$. Up to hyperbolic isometry, we can consider $\Sigma = S\cap\mathbb{H}^n$, where $S\subset\mathbb{R}^n$ is a Euclidean $(n-1)$-sphere of radius $1$. If the center of $S$ is in $\mathbb{H}^n$, the normalized mean curvature $\frac{H}{n-1}$ of such $\Sigma = S\cap\mathbb{H}^n$ is found to be as follows \cite[p.~184 Exercise 6e]{dC}:
\begin{itemize}
\item[(i)] $1$, if $S$ is tangent to $\p\mathbb{H}^n$ (horosphere);
\item[(ii)] $\operatorname{cos}\alpha$, if $S$ makes an angle $\alpha$ with $\p\mathbb{H}^n$ (hypersphere, see Fig. \ref{fig: hypersphere});
\item[(iii)] the Euclidean height $\eta>1$ of the center of $S$ relative to $\p\mathbb{H}^n$, if $S\subset\mathbb{H}^n$ (geodesic sphere).
\end{itemize}
If the center of $S$ is on $\p\mathbb{H}^n$, $\Sigma$ gives a totally geodesic hypersurface; if the center of $S$ is below $\p\mathbb{H}^n$ (i.e. the Euclidean height $-1<\eta< 0$), we have similar results by reflection around $\{y_n = 0\}$. So as a summary, we have
\begin{lem}\label{hyperbolic mean curv}
Consider the upper half-space model of $\mathbb{H}^n$. Let $\Sigma = S\cap\mathbb{H}^n$ be the intersection of $\mathbb{H}^n$ with a Euclidean $(n-1)$-sphere $S\subset\mathbb{R}^n$ of radius $1$. Then the normalized mean curvature $\frac{H}{n-1}$ of $\Sigma$ is the Euclidean height $\eta>-1$ of the center of $S$.
\end{lem}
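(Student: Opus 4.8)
The plan is to avoid the three-case split in the discussion preceding the lemma and to give instead a single uniform computation, using the way the second fundamental form transforms under a conformal change of the ambient metric; the point is that in the upper half-space model $g_{\mathbb{H}^n}$ is conformal to the flat metric, for which a Euclidean unit sphere is maximally symmetric. (Alternatively one can simply collate cases (i)--(iii) and the reflection remark, but the conformal computation is cleaner and makes the common value transparent.)

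First I would fix notation. Write $\mathbb{H}^n=\{y=(y_1,\dots,y_n):y_n>0\}$ with $g_{\mathbb{H}^n}=y_n^{-2}\delta=e^{2f}\delta$, where $\delta$ is the Euclidean metric and $f=-\ln y_n$. Let $p=(p_1,\dots,p_{n-1},\eta)$ be the Euclidean center of $S$, so that $\Sigma=S\cap\mathbb{H}^n$ is a piece of the Euclidean unit sphere about $p$, which I orient by the Euclidean unit normal $\nu_\delta=y-p$ pointing away from $p$; this is the outward normal of the domain lying on the same side as the Euclidean ball $B_1(p)$, i.e. the orientation for which the values $1$, $\cos\alpha=\eta$, $\eta$ of cases (i)--(iii) are the mean curvatures.

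Next I would recall (and, for completeness, rederive) the conformal transformation law. For $\tilde g=e^{2f}g$ one has $\tilde D_XY=D_XY+(Xf)Y+(Yf)X-g(X,Y)Df$, and from the paper's convention $h(X,Y)=-\langle\nu,D_XY\rangle$ a direct computation gives, for the $\tilde g$-unit normal $\tilde\nu=e^{-f}\nu$,
\[
\tilde h(X,Y)=e^{f}\bigl(h(X,Y)+\nu(f)\,g(X,Y)\bigr),\qquad \tilde H=e^{-f}\bigl(H+(n-1)\,\nu(f)\bigr).
\]
I would include this short derivation for completeness.

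Finally I would substitute. For the Euclidean unit sphere, $h_\delta(X,Y)=\langle D_X(y-p),Y\rangle=\delta(X,Y)$ and $\nu_\delta(f)=df(y-p)=-y_n^{-1}(y_n-\eta)=-1+\eta\,y_n^{-1}$; since $e^{f}=y_n^{-1}$ the first identity collapses to
\[
\tilde h=e^{f}\bigl(1+\nu_\delta(f)\bigr)\delta=\eta\,y_n^{-2}\delta=\eta\,g_{\mathbb{H}^n},
\]
so $\Sigma$ is umbilic (recovering what is already known) with $\tfrac{H}{n-1}=\eta$ at every point, for any of the umbilical types ($\eta>1$ geodesic sphere, $\eta=1$ horosphere, $-1<\eta<1$ with $\eta\ne0$ hypersphere, $\eta=0$ totally geodesic). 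This recovers and unifies cases (i)--(iii) together with the case $-1<\eta<0$, and it pins down the sign left ambiguous by the expression $\cos\alpha$. The computation itself is elementary, so there is no real obstacle; the only point requiring care is the orientation --- one must check that the outward normal of the enclosed domain is $\nu_\delta=y-p$ and not its negative, which is precisely what yields $\eta$ rather than $-\eta$ and makes the formula consistent with the positive values recorded above for centers lying in $\mathbb{H}^n$.
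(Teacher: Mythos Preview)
Your proof is correct, and it takes a genuinely different route from the paper. The paper does not compute at all: it quotes the three values (i)--(iii) from do Carmo's exercise \cite[p.~184 Exercise 6e]{dC} for horospheres, hyperspheres and geodesic spheres, observes that the center-on-$\partial\mathbb{H}^n$ case is totally geodesic, handles $-1<\eta<0$ by a reflection remark, and then states the lemma as a unified summary of these cases. You instead run a single conformal calculation using $\tilde h=e^{f}(h+\nu(f)\,g)$, which collapses to $\tilde h=\eta\,\hat g$ without any case distinction. This is cleaner, fully self-contained, and it fixes the orientation that the phrase ``$\cos\alpha$'' leaves implicit; it is also consistent with what the paper itself does later, since the very same conformal mean-curvature formula (your $\tilde H=e^{-f}(H+(n-1)\nu(f))$) appears in the paper as equation~(\ref{conformal mean}) and is used there for the hyperplane cases. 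The only cost of your approach is that you lose the explicit identification of which umbilical type (horosphere, hypersphere, geodesic sphere) corresponds to each range of $\eta$, but for the purposes of the lemma that information is not needed. One wording nit: ``the outward normal of the domain lying on the same side as the Euclidean ball $B_1(p)$'' is slightly garbled --- you simply mean the outward normal of $B_1(p)\cap\mathbb{H}^n$, i.e.\ the normal pointing away from the Euclidean center; the computation itself is unambiguous.
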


\begin{figure}
\begin{center}
\includegraphics[scale = 0.075]{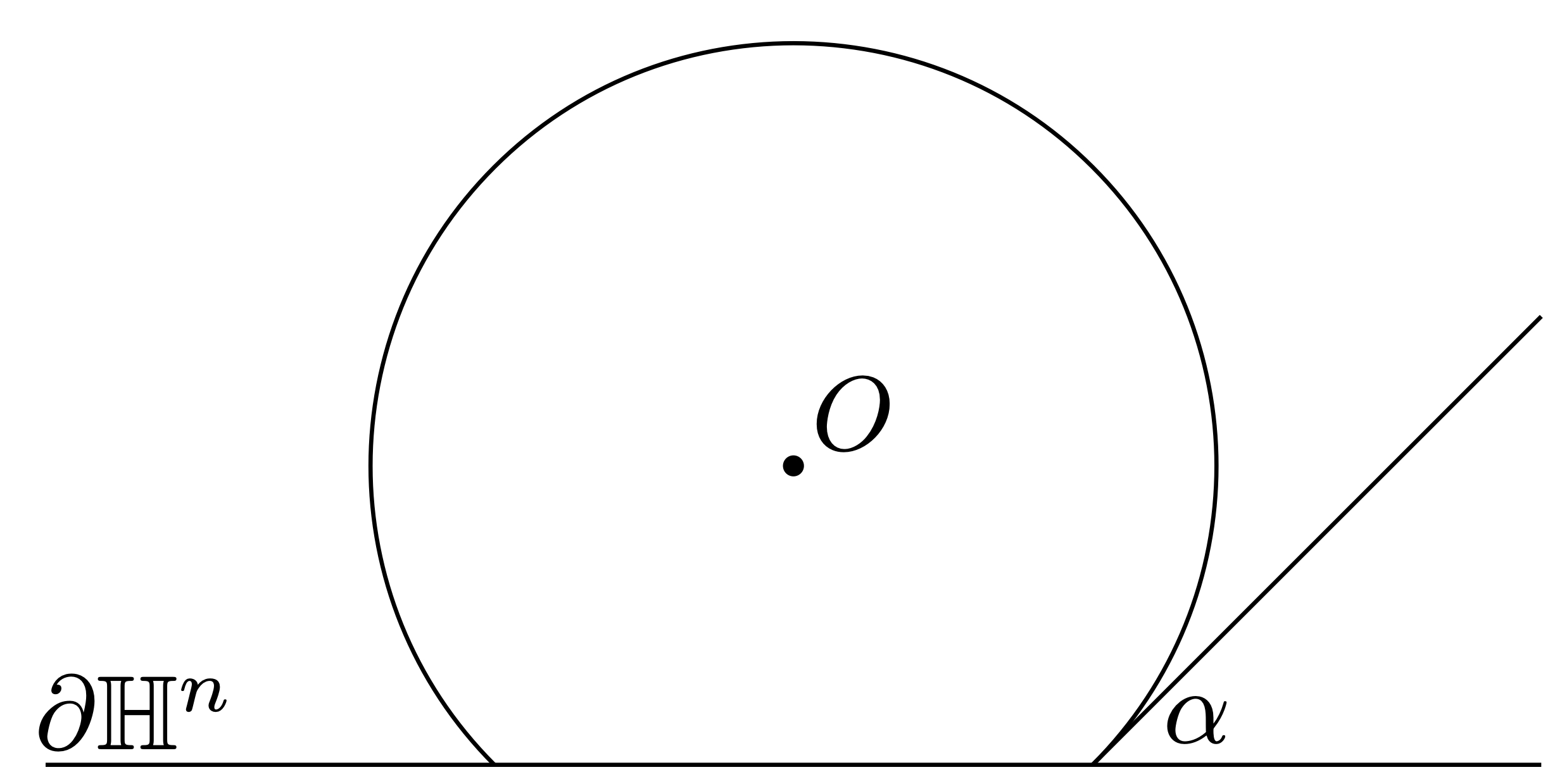}
\caption{A hypersphere in the half-space model, where $O$ is its Euclidean center}
\label{fig: hypersphere}
\end{center}
\end{figure}

Proposition \ref{hyperbolic static potential} tells us the the only possible static potentials are the coordinates $x_0, x_1, \cdots, x_n$ of $\mathbb{R}^{1, n}$ restricted to $\mathbb{H}^n$, so we first rewrite these coordinates in terms of the coordinates in the upper half-space model. Denote $\mathbb{H}^n_{-1}$ as the hyperboloid model, $D^n$ as the unit ball model, and $\mathbb{H}^n$ as the upper half-space model of the hyperbolic space. In \cite[p.~184 Exercise 7]{dC}, a stereographic projection from $\mathbb{H}^n_{-1}$ to $D^n$ was defined. Let us consider its inverse map $f_1: D^n\to\mathbb{H}^n_{-1}$ expressed $f_1(u_1,\cdots, u_n) = (x_0, x_1, \cdots, x_n)$ given by, where $i$ runs from $1$ to $n$:
$$
x_0 =\frac{2}{1-\sum_{i=1}^n u_i^{2}}-1,\qquad x_i =\frac{2 u_i}{1-\sum_{i=1}^n u_i^{2}}.
$$

A conformal mapping $f_2: \mathbb{H}^n\to D^n$ between the unit ball $D^n$ and the upper half-space $\mathbb{H}^n$, expressed $f_2(y_1, \cdots, y_n) = (u_1, \cdots, u_n)$, is given by, where $m$ runs from $1$ to $n-1$:
$$
u_m =\frac{2 y_m}{\sum_{m=1}^{n-1} y_m^{2} + (y_n + 1)^2},\qquad u_n =1 - \frac{2(y_n + 1)}{\sum_{m=1}^{n-1} y_m^{2} + (y_n + 1)^2}.
$$

Combining them together, we get the expression of the coordinates $x_0, x_1, \cdots, x_n$ in the upper half-space $\mathbb{H}^n$:
$$
\begin{aligned}
x_0 &=\frac{\sum_{m=1}^{n-1} y_m^{2} + y_n^2 + 1}{2y_n},\\
x_m &=\frac{y_m}{y_n},\\
x_n &=\frac{\sum_{m=1}^{n-1} y_m^{2} + y_n^2 - 1}{2y_n}.
\end{aligned}
$$

\begin{prop}
Consider the upper half-space model of $\mathbb{H}^n$. Let $\Sigma = S\cap\mathbb{H}^n$ be the intersection of $\mathbb{H}^n$ with a Euclidean $(n-1)$-sphere $S\subset\mathbb{R}^n$. Then both the interior region and the exterior region with boundary $\Sigma$ are non-generic domains in $\mathbb{H}^n$. Moreover, if the Euclidean radius of $S$ is $1$ and the Euclidean height of the center of $S$ is $\eta>-1$, then
$$\operatorname{ker}\Phi^* = \operatorname{span}\{x_1, \cdots, x_{n-1}, (2-\eta^2)x_0 + \eta^2\, x_n\}.$$
Here $(x_0, x_1, \cdots, x_n)$ are the coordinates of $\mathbb{R}^{1, n}$ restricted to $\mathbb{H}^n$ in the hyperboloid model.
\end{prop}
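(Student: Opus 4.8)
The plan is to exploit the fact that, by Proposition~\ref{hyperbolic static potential}, every possible static potential on a domain in $\mathbb{H}^n$ is the restriction of a linear functional of $\mathbb{R}^{1,n}$, so that both the interior equation $L^*u=0$ and the boundary condition reduce to linear algebra. Since $L^*(x_i)=0$ was shown there, $\operatorname{ker}L^*=\operatorname{span}\{x_0,x_1,\dots,x_n\}$, and $\operatorname{ker}\Phi^*$ is precisely the subspace of those $u=u_\beta:=\sum_{i=0}^n\beta_i x_i$ whose restriction to $\Sigma$ satisfies $u_\nu\hat g=uh$. The analytic ingredient I would reuse from that proof is $\operatorname{Hess}x_i=x_i\,g_{\mathbb{H}^n}$, hence $\operatorname{Hess}u=u\,g_{\mathbb{H}^n}$ for every $u\in\operatorname{ker}L^*$. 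Differentiating the function $\langle\operatorname{grad}u_\alpha,\operatorname{grad}u_\beta\rangle-u_\alpha u_\beta$ along an arbitrary vector field and using $\operatorname{Hess}u_\alpha=u_\alpha g$, $\operatorname{Hess}u_\beta=u_\beta g$ shows it is constant on $\mathbb{H}^n$; evaluating at $(1,0,\dots,0)\in\mathbb{H}^n$, where the induced metric is Euclidean and $dx_0$ vanishes, identifies the constant as $Q(\alpha,\beta):=-\alpha_0\beta_0+\sum_{i=1}^n\alpha_i\beta_i$, the bilinear form associated with the Minkowski quadratic form. Thus $\langle\operatorname{grad}u_\alpha,\operatorname{grad}u_\beta\rangle=u_\alpha u_\beta+Q(\alpha,\beta)$ everywhere on $\mathbb{H}^n$.

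Next I would realize $\Sigma$ as a level set of such a linear functional. Since $\Sigma=S\cap\mathbb{H}^n$ is umbilic, in the hyperboloid model it equals $\{u_\alpha=c\}$ for some covector $\alpha$ and constant $c$. To pin these down, normalize (by a horizontal Euclidean translation, which is a hyperbolic isometry) so that $S$ is the Euclidean sphere of radius $1$ about $(0,\dots,0,\eta)$; substituting the change-of-coordinates formulas to the upper half-space model recorded just before the statement into $\sum_{m=1}^{n-1}y_m^2+(y_n-\eta)^2=1$, and using the identities $x_0-x_n=1/y_n$ and $x_0+x_n=(\sum_m y_m^2+y_n^2)/y_n$, gives $\eta^2 x_0+(2-\eta^2)x_n=2\eta$ on $\Sigma$, i.e. $\alpha=(\eta^2,0,\dots,0,2-\eta^2)$ and $c=2\eta$. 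On $\Sigma$ one then has $u_\alpha=c$ and $|\operatorname{grad}u_\alpha|^2=c^2+Q(\alpha,\alpha)=4\eta^2+4(1-\eta^2)=4$, so $\operatorname{grad}u_\alpha\neq0$ and $\nu=\pm\tfrac12\operatorname{grad}u_\alpha$ along $\Sigma$; computing the second fundamental form of $\Sigma$ from $\operatorname{Hess}u_\alpha=u_\alpha g$ yields $h=\pm\tfrac{c}{2}\hat g=\pm\eta\,\hat g$ with the same sign as in $\nu$, which re-derives umbilicity and $\tfrac{H}{n-1}=\pm\eta$ in agreement with Lemma~\ref{hyperbolic mean curv}.

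Finally, since $\Sigma$ is umbilic the tensorial condition $u_\nu\hat g=uh$ is equivalent to its trace $u_\nu=\tfrac{H}{n-1}u$, and for $u=u_\beta$ the identities above give, along $\Sigma$,
$$(u_\beta)_\nu-\tfrac{H}{n-1}u_\beta=\pm\tfrac12\big(\langle\operatorname{grad}u_\alpha,\operatorname{grad}u_\beta\rangle-c\,u_\beta\big)=\pm\tfrac12\,Q(\alpha,\beta),$$
using $u_\alpha=c$ on $\Sigma$. The sign is immaterial, and the computation is identical for the interior and the exterior region, since passing to the complement flips both $\nu$ and $H$. Hence $u_\beta\in\operatorname{ker}\Phi^*$ exactly when $Q(\alpha,\beta)=-\eta^2\beta_0+(2-\eta^2)\beta_n=0$, a single linear condition whose solution space is $\operatorname{span}\{x_1,\dots,x_{n-1},(2-\eta^2)x_0+\eta^2 x_n\}$. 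This space is $n$-dimensional, in particular non-trivial, so $(\Omega,\Sigma)$ is a non-generic domain with the asserted kernel.

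The one genuinely fiddly step is the second: correctly composing $f_1\circ f_2$ to transfer the equation of $S$ to the hyperboloid model and read off $\alpha$ and $c$, and then keeping the outward-normal versus mean-curvature sign consistent through the second-fundamental-form computation. Everything after the identity $(u_\beta)_\nu-\tfrac{H}{n-1}u_\beta=\pm\tfrac12\,Q(\alpha,\beta)$ on $\Sigma$ is pure linear algebra. It is also worth noting that $|\operatorname{grad}u_\alpha|^2=c^2+Q(\alpha,\alpha)=4$ independently of $\eta$, which is precisely what makes the description of $\operatorname{ker}\Phi^*$ uniform across horospheres, hyperspheres, and geodesic spheres.
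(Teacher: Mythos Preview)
Your argument is correct and takes a genuinely different route from the paper's. The paper works entirely in upper half-space coordinates: it parametrizes $\Sigma$ by Euclidean unit vectors $\vec e$, writes $\nu=y_n\vec e$, invokes Lemma~\ref{hyperbolic mean curv} for $H=(n-1)\eta$, and then computes $(x_i)_\nu-\eta\,x_i$ directly for $i=0$, for $1\le m\le n-1$, and for $i=n$ using the explicit change-of-model formulas, obtaining the constants $-\tfrac12\eta^2$, $0$, and $1-\tfrac12\eta^2$; the kernel is then read off from these three values by forming the appropriate linear combination.

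You instead package the problem via the identity $\langle\operatorname{grad}u_\alpha,\operatorname{grad}u_\beta\rangle=u_\alpha u_\beta+Q(\alpha,\beta)$, which follows from $\operatorname{Hess}u=u\,g$, realize $\Sigma$ as the affine slice $\{u_\alpha=2\eta\}$ with $\alpha=(\eta^2,0,\dots,0,2-\eta^2)$, and reduce the boundary condition to the single linear equation $Q(\alpha,\beta)=0$. This is more conceptual: it identifies $\operatorname{ker}\Phi^*$ intrinsically as the Minkowski-orthogonal complement of $\alpha$, explains in one stroke why the kernel has dimension exactly $n$, treats the interior and exterior regions simultaneously, and re-derives both umbilicity and $H=(n-1)\eta$ rather than quoting them. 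The paper's approach is more elementary in that it needs no new identity beyond the coordinate formulas already recorded; it also produces the individual values $(x_i)_\nu-\tfrac{H}{n-1}x_i$ explicitly, which your formula recovers as $\tfrac12\,Q(\alpha,e_i)$ --- a useful consistency check between the two arguments.
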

\begin{proof}
We will just verify that the interior region bounded by $\Sigma$ is a non-generic domain in $\mathbb{H}^n$; the same calculation will give us that the exterior region bounded by $\Sigma$ is a non-generic domain as well.

Let us use $y_i, i = 1,\cdots,n,$ for the coordinates of $\mathbb{R}^n$. The metric of the upper half-space model of $\mathbb{H}^n$ is $g = y_n^{-2}\delta$, where $\delta$ is the Euclidean metric. Up to isometry we may assume the Euclidean radius of $S$ is $1$, and that the Euclidean center of $S$ is $O = (0, \cdots, 0, \eta)$, where $\eta>-1$. Suppose $\vec{e} = \sum_{i=1}^n a_i \frac{\p}{\p y_i}$ is any vector with Euclidean norm 1. Then we may parametrize $\Sigma$ as $\{\vec{y} = O + \vec{e}: y_n>0\}$, which means the points on $\Sigma$ satisfy
$$
y_m = a_m, \qquad y_n = \eta + a_n,
$$
where $m$ runs from $1$ to $n-1$.

Denote the outward unit normal at $\vec{y}$ as $\nu = c\vec{e}$, then
$$1 = g(\nu, \nu) = y_n^{-2}\delta(c\vec{e}, c\vec{e}) = c^2y_n^{-2},$$
so 
$$\nu = y_n\vec{e} = (\eta + a_n)\sum_{i=1}^n a_i \frac{\p}{\p y_i}.$$
From Lemma \ref{hyperbolic mean curv}, the mean curvature with respect to this normal is $H = (n-1)\eta$. Then
$$
\begin{aligned}
&(x_0)_\nu-\frac{H}{n-1} x_0 = y_n\sum_{i=1}^n a_i \frac{\p}{\p y_i}x_0 - \eta x_0 = -\frac12\eta^2,\\
&(x_m)_\nu-\frac{H}{n-1} x_m = y_n\sum_{i=1}^n a_i \frac{\p}{\p y_i}x_m - \eta x_m = a_m - y_m = 0,\\
&(x_n)_\nu-\frac{H}{n-1} x_n = y_n\sum_{i=1}^n a_i \frac{\p}{\p y_i}x_n - \eta x_n = 1-\frac12\eta^2,
\end{aligned}
$$
where we make use of the relation $\sum_{i=1}^n a_i^2 = 1$. Notice that the boundary equation is linear, and we have
$$
\left((2-\eta^2)x_0 + \eta^2x_n\right)_\nu-\frac{H}{n-1} \left((2-\eta^2)x_0 + \eta^2x_n\right) = 0.
$$
So the region with boundary $\Sigma$ is a non-generic domain in $\mathbb{H}^n$, and $\operatorname{ker}\Phi^* = \operatorname{span}\{x_1, \cdots, x_{n-1}, (2-\eta^2)x_0 + \eta^2\, x_n\}.$
\end{proof}

The intersection with $\mathbb{H}^{n}$ of hyperplanes of $\mathbb{R}^{n}$ are also boundaries of non-generic domains in $\mathbb{H}^n$, as they can be obtained by inversion isometries from the spheres. In the next two propositions, we will determine the space of static potentials based on different positions of hyperplanes.

\begin{prop}
Consider the upper half-space model of $\mathbb{H}^n$. Let $\Sigma = P\cap\mathbb{H}^n$ be the intersection of $\mathbb{H}^n$ with a Euclidean $(n-1)$-plane $P\subset\mathbb{R}^n$ that is parallel to $\p\mathbb{H}^n$. Then
$$\operatorname{ker}\Phi^* = \operatorname{span}\{x_1, \cdots, x_{n-1}, x_0 - x_n\}.$$
Here $(x_0, x_1, \cdots, x_n)$ are the coordinates of $\mathbb{R}^{1, n}$ restricted to $\mathbb{H}^n$ in the hyperboloid model.
\end{prop}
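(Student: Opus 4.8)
The plan is to argue exactly as in the preceding proposition. By Proposition \ref{hyperbolic static potential}, every possible static potential on a domain in $\mathbb{H}^n$ is a linear combination of the coordinates $x_0,\dots,x_n$, and by Theorem \ref{const mean} the boundary $\Sigma$ is umbilic, so $h=\tfrac{H}{n-1}\hat g$ and the boundary equation $u_\nu\hat g-uh=0$ collapses to the single scalar condition $u_\nu=\tfrac{H}{n-1}u$ on $\Sigma$. Hence $\operatorname{ker}\Phi^*$ is exactly the subspace of $\operatorname{span}\{x_0,\dots,x_n\}$ cut out by that scalar equation, and since $\operatorname{dim}\operatorname{ker}\Phi^*\le n$ it suffices to exhibit $n$ linearly independent solutions together with one element of $\operatorname{span}\{x_0,\dots,x_n\}$ that fails it. Since $P$ is parallel to $\p\mathbb{H}^n=\{y_n=0\}$ I may take $P=\{y_n=c\}$ for some $c>0$ and let $\Omega=\{y_n>c\}$, with outward unit normal $\nu=-y_n\,\p/\p y_n$ along $\Sigma$. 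A horizontal hyperplane in the half-space model is a horosphere; that its normalized mean curvature is $\tfrac{H}{n-1}=1$ for this choice of $\nu$ follows from the classification recalled before Lemma \ref{hyperbolic mean curv} (via an inversion) or, more directly, from the one-line computation that $h=\hat g$ for the conformal metric $g=y_n^{-2}\delta$.

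Next I would substitute the explicit coordinate formulas derived above: for $1\le m\le n-1$ one has $x_m=y_m/y_n$, and combining the expressions for $x_0$ and $x_n$ gives the key identity $x_0-x_n=1/y_n$. Differentiating along $\nu=-y_n\,\p/\p y_n$ and restricting to $\{y_n=c\}$ is then immediate: $(x_m)_\nu=\tfrac{H}{n-1}x_m$ and $(x_0-x_n)_\nu=\tfrac{H}{n-1}(x_0-x_n)$ on $\Sigma$, so the $n$ manifestly independent functions $x_1,\dots,x_{n-1},x_0-x_n$ all lie in $\operatorname{ker}\Phi^*$. To see the resulting containment is an equality, I would test the boundary operator on $x_0$ (or, equally, on $x_0+x_n=(\sum_{m=1}^{n-1}y_m^2+y_n^2)/y_n$) and get $(x_0)_\nu-\tfrac{H}{n-1}x_0=-c\ne0$ on $\Sigma$; thus $x_0\notin\operatorname{ker}\Phi^*$, so $\operatorname{ker}\Phi^*$ is a proper subspace of $\operatorname{span}\{x_0,\dots,x_n\}$ and therefore coincides with the $n$-dimensional span of the exhibited potentials. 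Running the same computation with $\Omega=\{0<y_n<c\}$ — which flips $\nu$ and the sign of $\tfrac{H}{n-1}$ — yields the identical span, so the statement holds for either region bounded by $P$.

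The computation has no real obstacle; the only point requiring care is the consistent bookkeeping of signs — the convention $h(X,Y)=-\langle\nu,D_X Y\rangle$, the choice of which side of $P$ is $\Omega$, and the resulting sign of $\tfrac{H}{n-1}$ — together with checking that the tensorial boundary condition genuinely reduces to the scalar equation $u_\nu=\tfrac{H}{n-1}u$, both of which are immediate once one invokes the umbilicity of $\Sigma$ from Theorem \ref{const mean}. Everything else is routine differentiation of the explicit coordinate expressions.
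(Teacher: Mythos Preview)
Your proof is correct and follows essentially the same approach as the paper's: reduce the boundary equation to the scalar condition $u_\nu=\tfrac{H}{n-1}u$, test it on each coordinate $x_i$, and read off the $n$-dimensional kernel (the paper takes the opposite normal, getting $\tfrac{H}{n-1}=-1$, and computes $(x_0)_\nu-\tfrac{H}{n-1}x_0$ and $(x_n)_\nu-\tfrac{H}{n-1}x_n$ separately rather than using your shortcut $x_0-x_n=1/y_n$). One small quibble: invoking Theorem~\ref{const mean} for umbilicity is circular, since that theorem presupposes the domain is non-generic; simply cite that horospheres are totally umbilic a priori, as recalled in the discussion before Lemma~\ref{hyperbolic mean curv}.
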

\begin{proof}
We may parametrize $\Sigma$ as $\{y_n = c>0\}$, with the outward unit normal $\nu = y_n\frac{\p}{\p y_n}$. Recall the formula of the mean curvature under a conformal change $g = e^{2\phi}g_0$ is given by
\begin{equation}\label{conformal mean}
H=e^{-\phi}(H_0+(n-1)\langle\nabla \phi, \nu\rangle).
\end{equation}
So the mean curvature with respect to this normal is
$$
H = y_n\left(H_0 + (n-1)\left<\nabla(-\operatorname{ln}y_n), \frac{\p}{\p y_n}\right>_\delta\right) = -(n-1).
$$
Then for $m=1, \cdots, n-1$,
$$
\begin{aligned}
&(x_0)_\nu-\frac{H}{n-1} x_0 = y_n\frac{\p}{\p y_n}x_0 + x_0 = y_n >0,\\
&(x_m)_\nu-\frac{H}{n-1} x_m = y_n\frac{\p}{\p y_n}x_m + x_m = -\frac{y_m}{y_n} + \frac{y_m}{y_n} = 0, \\
&(x_n)_\nu-\frac{H}{n-1} x_n = y_n\frac{\p}{\p y_n}x_n + x_n = y_n >0.
\end{aligned}
$$
The result follows immediately.
\end{proof}

\begin{prop}
Consider the upper half-space model of $\mathbb{H}^n$. Let $\Sigma = P\cap\mathbb{H}^n$ be the intersection of $\mathbb{H}^n$ with a Euclidean $(n-1)$-plane $P\subset\mathbb{R}^n$ that makes an angle $\alpha\in(0, \pi)$ with $\p\mathbb{H}^n$. Up to isometry we may parametrize $\Sigma$ as $\{\operatorname{cos}\alpha\, y_n = \operatorname{sin}\alpha \, y_1: y_n > 0\}$, then
$$\operatorname{ker}\Phi^* = \operatorname{span}\{x_0, x_2, \cdots, x_n\}.$$
Here $(x_0, x_1, \cdots, x_n)$ are the coordinates of $\mathbb{R}^{1, n}$ restricted to $\mathbb{H}^n$ in the hyperboloid model.
\end{prop}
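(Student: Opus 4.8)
The plan is to follow the template of the preceding three propositions. By Proposition \ref{hyperbolic static potential}, every possible static potential on a domain in $\mathbb{H}^n$ is a linear combination of the coordinate functions $x_0,x_1,\dots,x_n$ restricted to $\mathbb{H}^n$, so $\operatorname{ker}\Phi^*$ is exactly the subspace of $\operatorname{span}\{x_0,\dots,x_n\}$ cut out by the boundary condition. Since $\Sigma=P\cap\mathbb{H}^n$ is a totally umbilic hypersurface (a hypersphere), we have $h=\tfrac{H}{n-1}\hat g$, and the boundary equation $u_\nu\hat g=uh$ is equivalent to the single scalar equation $u_\nu-\tfrac{H}{n-1}u=0$ on $\Sigma$. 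Thus it suffices to evaluate the linear quantity $(x_i)_\nu-\tfrac{H}{n-1}x_i$ on $\Sigma$ for each $i=0,1,\dots,n$, identify which $x_i$ satisfy it, and invoke linearity.

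First I would set up the geometry in the upper half-space model, with $g=y_n^{-2}\delta$ and, after the normalization in the statement, $\Sigma=\{\sin\alpha\,y_1-\cos\alpha\,y_n=0,\ y_n>0\}$. The Euclidean unit normal to $P$ is $\pm(\sin\alpha\,\tfrac{\p}{\p y_1}-\cos\alpha\,\tfrac{\p}{\p y_n})$, and since the conformal factor scales lengths by $y_n^{-1}$, the $g$-unit outward normal is
$$
\nu=\pm\,y_n\left(\sin\alpha\,\frac{\p}{\p y_1}-\cos\alpha\,\frac{\p}{\p y_n}\right),
$$
the sign depending on which of the two regions bounded by $\Sigma$ one takes. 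Applying the conformal mean-curvature formula (\ref{conformal mean}) with $\phi=-\ln y_n$ and $H_0=0$ (a Euclidean hyperplane is minimal) gives $\tfrac{H}{n-1}=\pm\cos\alpha$, with the same sign as in the choice of $\nu$; this also recovers case (ii) of Lemma \ref{hyperbolic mean curv}, as a plane at angle $\alpha$ is the image of a hypersphere under an inversion isometry.

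Next I would substitute the half-space expressions
$$
x_0=\frac{\sum_{m=1}^{n-1}y_m^{2}+y_n^{2}+1}{2y_n},\qquad x_m=\frac{y_m}{y_n},\qquad x_n=\frac{\sum_{m=1}^{n-1}y_m^{2}+y_n^{2}-1}{2y_n},
$$
and apply $\nu$ as a derivation. For $2\le m\le n-1$ one finds directly $(x_m)_\nu=\pm\cos\alpha\,x_m$, so these lie in $\operatorname{ker}\Phi^*$. For $x_0$ and $x_n$, the derivative produces a term proportional to $\sin\alpha\,y_1$ which, after using the defining relation $\sin\alpha\,y_1=\cos\alpha\,y_n$ of $\Sigma$ to eliminate $y_1$, collapses to $\pm\cos\alpha\,x_0$ and $\pm\cos\alpha\,x_n$ respectively; hence $x_0,x_n\in\operatorname{ker}\Phi^*$ as well. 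For $x_1$, by contrast, $(x_1)_\nu-\tfrac{H}{n-1}x_1=\pm\sin\alpha$, which is a nonzero constant since $\alpha\in(0,\pi)$. Because $u\mapsto u_\nu-\tfrac{H}{n-1}u$ is linear and annihilates $x_0,x_2,\dots,x_n$ but not $x_1$, a combination $u=\sum_i c_ix_i$ satisfies the boundary equation on $\Sigma$ if and only if $c_1=0$, which yields $\operatorname{ker}\Phi^*=\operatorname{span}\{x_0,x_2,\dots,x_n\}$ (and shows both regions bounded by $\Sigma$ are non-generic).

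I expect the only genuine subtlety to be bookkeeping: keeping the orientation of $\nu$ consistent with the sign of $H$ so that the two cancel uniformly in the computations for $x_2,\dots,x_n$ regardless of which region is chosen, and carefully applying the constraint $\sin\alpha\,y_1=\cos\alpha\,y_n$ precisely when simplifying the $x_0$- and $x_n$-derivatives. A sign slip at either point would spoil the cancellation; everything else is routine differentiation. An alternative route — transporting the hypersphere result of the previous proposition through the explicit inversion isometry and the corresponding Lorentz transformation on the $x_i$ — is available but no shorter, so the direct computation parallel to the earlier proofs is preferable.
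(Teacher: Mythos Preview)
Your proposal is correct and follows essentially the same approach as the paper: compute the unit normal and mean curvature via the conformal formula, then evaluate $(x_i)_\nu-\tfrac{H}{n-1}x_i$ on $\Sigma$ for each coordinate function using the half-space expressions and the defining relation $\sin\alpha\,y_1=\cos\alpha\,y_n$. The paper fixes one choice of normal (yielding $H=-(n-1)\cos\alpha$ and $(x_1)_\nu-\tfrac{H}{n-1}x_1=-\sin\alpha$), whereas you track both orientations with a $\pm$, but this is a cosmetic difference only.
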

\begin{proof}
Up to isometry we may parametrize $\Sigma$ as $\{\operatorname{cos}\alpha\, y_n = \operatorname{sin}\alpha \, y_1: y_n > 0\}$, with the outward unit normal $\nu = y_n(-\operatorname{sin}\alpha\frac{\p}{\p y_1} + \operatorname{cos}\alpha\frac{\p}{\p y_n})$. So by (\ref{conformal mean}) the mean curvature with respect to this normal is
$$
H = y_n\left(H_0 + (n-1)\left<\nabla(-\operatorname{ln}y_n), -\operatorname{sin}\alpha\frac{\p}{\p y_1} + \operatorname{cos}\alpha\frac{\p}{\p y_n}\right>_\delta\right) = -(n-1)\operatorname{cos}\alpha.
$$
Then for $k=2, \cdots, n-1$,
$$
\begin{aligned}
&(x_0)_\nu-\frac{H}{n-1} x_0 = y_n(-\operatorname{sin}\alpha\frac{\p}{\p y_1} + \operatorname{cos}\alpha\frac{\p}{\p y_n})x_0 + \operatorname{cos}\alpha\, x_0 = 0,\\
&(x_1)_\nu-\frac{H}{n-1} x_1 = y_n(-\operatorname{sin}\alpha\frac{\p}{\p y_1} + \operatorname{cos}\alpha\frac{\p}{\p y_n})x_1 + \operatorname{cos}\alpha\, x_1 = -\operatorname{sin}\alpha<0,\\
&(x_k)_\nu-\frac{H}{n-1} x_k = y_n(-\operatorname{sin}\alpha\frac{\p}{\p y_1} + \operatorname{cos}\alpha\frac{\p}{\p y_n})x_k + \operatorname{cos}\alpha\, x_k = 0,\\
&(x_n)_\nu-\frac{H}{n-1} x_n = y_n(-\operatorname{sin}\alpha\frac{\p}{\p y_1} + \operatorname{cos}\alpha\frac{\p}{\p y_n})x_n + \operatorname{cos}\alpha\, x_n = 0.
\end{aligned}
$$
The result follows immediately.
\end{proof}


As a summary, we have the following classification result:
\begin{thm}[Classification of simple non-generic domains in $\mathbb{H}^n$]
There are four types of boundaries of non-generic domains in $\mathbb{H}^n$:\\
(i) horospheres, (ii) hyperspheres, (iii) geodesic spheres, (iv) intersections with $\mathbb{H}^n$ of hyperplanes of $\mathbb{R}^n$.
\end{thm}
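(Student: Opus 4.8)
The plan is to obtain this classification by assembling two facts already established: the necessary condition of Theorem \ref{const mean}, which forces the boundary of a non-generic domain to be totally umbilic, and the three preceding propositions, which exhibit explicit static potentials realizing each standard family of umbilic hypersurfaces of $\mathbb{H}^n$. Thus the theorem asserts no more than the equivalence of ``boundary of a simple non-generic domain'' with ``connected umbilic hypersurface of $\mathbb{H}^n$'', read off the classical list of such hypersurfaces.

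For the forward direction, let $(\Omega,\Sigma)$ be a simple non-generic domain in $\mathbb{H}^n$. By Theorem \ref{const mean}, $\Sigma = \partial\Omega$ is umbilic, and I would then recall from \cite[pp.~177--185]{dC} that in the upper half-space model a connected umbilic hypersurface is a geodesic sphere, a horosphere, a hypersphere, or the intersection with $\mathbb{H}^n$ of a Euclidean hyperplane of $\mathbb{R}^n$ (the last family including the totally geodesic hypersurfaces). Each of these is embedded, connected, and separates $\mathbb{H}^n$ into two connected regions, each carrying a single boundary component; hence $\Omega$ is the interior or the exterior of such a hypersurface and its boundary has one of the four listed forms.

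For the converse, I would invoke the three propositions above to show that every member of every family does bound a non-generic domain (interior and exterior alike). By Lemma \ref{hyperbolic mean curv}, a Euclidean $(n-1)$-sphere $S$ of radius $1$ whose center lies at Euclidean height $\eta$ gives a geodesic sphere when $\eta>1$, a horosphere when $\eta=1$, and a hypersphere when $-1<\eta<1$ (totally geodesic when $\eta=0$), and the sphere proposition provides the static potentials $x_1,\dots,x_{n-1}$ and $(2-\eta^2)x_0+\eta^2 x_n$. The two remaining propositions dispatch family (iv) by treating a Euclidean hyperplane parallel to $\partial\mathbb{H}^n$ and one meeting $\partial\mathbb{H}^n$ at an angle $\alpha\in(0,\pi)$. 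Since any umbilic hypersurface is carried by a hyperbolic isometry to one of these normal forms and $\ker\Phi^*$ transforms equivariantly under isometries, every member of each family is the boundary of a non-generic domain.

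I do not expect a genuine obstacle: the proof is a matter of bookkeeping on top of the earlier results. The only point that needs care is verifying that the parameter ranges in the three propositions really do exhaust all umbilic hypersurfaces up to hyperbolic isometry---in particular that the boundary value $\eta=1$ is correctly identified with the horosphere and that the $\alpha=\pi/2$ planes account for the totally geodesic case---and that in every instance both the interior and the exterior region are connected with exactly one boundary component, which is what justifies calling these domains ``simple''.
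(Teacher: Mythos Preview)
Your proposal is correct and matches the paper's approach: the theorem is stated in the paper simply ``as a summary'' of the preceding material, with no separate proof, so the argument is exactly the one you outline---Theorem~\ref{const mean} forces the boundary to be umbilic, the classical list of umbilic hypersurfaces in $\mathbb{H}^n$ gives the four families, and the three preceding propositions verify that each family actually bounds a non-generic domain. Your write-up is in fact more explicit than the paper's, which leaves the forward direction and the isometry-reduction to normal forms implicit.
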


We notice Hijazi-Montiel-Raulot \cite{H-M-R} had the following rigidity result for asymptotically hyperbolic manifolds with inner boundary:
\begin{prop}[Hijazi-Montiel-Raulot]
Let $(M^3,g)$ be a three-dimensional complete AH manifold with scalar curvature satisfying $R\ge -6$ and compact inner boundary $\Sigma$ homeomorphic to a $2$-sphere whose mean curvature is such that
$$
H \leq 2\sqrt{\frac{4 \pi}{\operatorname{Area}(\Sigma)}+1}.
$$
Then the energy-momentum vector $\mathbf{p}_g$ is time-like future-directed or zero. Moreover, if it vanishes then $(M^3,g)$ is isometric to the complement of a geodesic ball in $\mathbb{H}^3$.
\end{prop}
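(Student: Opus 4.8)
The plan is to prove this by the spinorial Witten-type argument adapted to asymptotically hyperbolic manifolds with inner boundary. Since $\dim M = 3$, $M$ is automatically spin. Recall that $\mathbb{H}^3$ carries a maximal-dimensional space of \emph{imaginary Killing spinors}, i.e.\ solutions of $\nabla_X\psi = \tfrac{i}{2}\,X\cdot\psi$; set $\widehat\nabla_X := \nabla_X - \tfrac{i}{2}X\cdot$ and let $\widehat D := D + \tfrac{3i}{2}$ be the corresponding zeroth-order modification of the Dirac operator, so that $\widehat\nabla$-parallel spinors are exactly the imaginary Killing spinors. The first step is to solve on $M$ the boundary value problem $\widehat D\psi = 0$, where $\psi$ is asymptotic at the AH end to a prescribed imaginary Killing spinor $\psi_0$ of $\mathbb{H}^3$ and satisfies a local elliptic self-adjoint boundary condition (of chiral / MIT-bag type) along $\Sigma$. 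Ellipticity and self-adjointness of this boundary condition, together with the decay of the AH geometry, make $\widehat D$ Fredholm, and the usual perturbation argument produces such a solution $\psi$.

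The second step is the Reilly--Witten integration formula for $\widehat\nabla$, $\widehat D$ on $M$: schematically,
\[
\int_M \Big(|\widehat\nabla\psi|^2 - |\widehat D\psi|^2 + \tfrac14\,(R+6)\,|\psi|^2\Big)\,d\mu \;=\; \mathcal B_\infty(\psi_0) \;-\; \mathcal B_\Sigma(\psi),
\]
where $\mathcal B_\infty$ is the flux through the sphere at infinity and, after integrating the asymptotic expansion, equals a positive multiple of $\langle \mathbf p_g,\tau_{\psi_0}\rangle$ for the future-directed causal vector $\tau_{\psi_0}$ determined by $\psi_0$, while $\mathcal B_\Sigma(\psi) = \int_\Sigma \langle \mathcal A\psi,\psi\rangle\,d\sigma$ for a boundary operator $\mathcal A$ built from $H$ and the intrinsic Dirac operator $D_\Sigma$ of the $2$-sphere $\Sigma$. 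Since $\widehat D\psi=0$ and $R\ge -6$, the left-hand side is $\ge 0$, so everything reduces to showing $\mathcal B_\Sigma(\psi)\le 0$.

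For the third step, under the chosen boundary condition $\mathcal B_\Sigma(\psi)$ is controlled by the bottom of the spectrum of the (suitably shifted) intrinsic Dirac operator on $\Sigma$; it is nonpositive as soon as $\tfrac{H}{2}\le \sqrt{\lambda_1(D_\Sigma)^2+1}$. B\"ar's eigenvalue inequality on a topological $2$-sphere gives $\lambda_1(D_\Sigma)^2 \ge \tfrac{4\pi}{\operatorname{Area}(\Sigma)}$, with equality exactly for round metrics, so the hypothesis $H\le 2\sqrt{\tfrac{4\pi}{\operatorname{Area}(\Sigma)}+1}$ forces $\mathcal B_\Sigma(\psi)\le 0$, hence $\langle \mathbf p_g,\tau\rangle\ge 0$ for every future-null $\tau$ arising this way; therefore $\mathbf p_g$ is timelike future-directed or zero. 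For the rigidity statement, if $\mathbf p_g=0$ then all inequalities above are equalities for every choice of $\psi_0$, so $\widehat\nabla\psi\equiv 0$ and $M$ carries the maximal space of imaginary Killing spinors; computing the curvature of $\widehat\nabla$ (or invoking the Baum--Friedrich--Grunewald--Kath classification) shows $(M,g)$ has constant curvature $-1$, and with completeness and a single compact inner boundary it is the exterior of a domain in $\mathbb{H}^3$; equality in B\"ar's inequality makes $\Sigma$ round and equality in the $\mathcal B_\Sigma$ estimate makes it totally umbilic, so $\Sigma$ is a geodesic sphere and $(M^3,g)$ is isometric to the complement of a geodesic ball in $\mathbb{H}^3$.

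The main obstacle is the analytic backbone: establishing that the chiral/MIT-type boundary condition makes $\widehat D$ Fredholm with the expected kernel, and carrying out the careful asymptotic analysis at the AH end so that $\mathcal B_\infty$ genuinely computes the energy-momentum vector $\mathbf p_g$ in a gauge-independent way. Equally delicate is pinning down the \emph{sharp} constant in the inner-boundary estimate --- this is precisely where B\"ar's inequality enters and where the $+1$ under the square root is produced by the imaginary Killing modification of the connection --- since only the sharp form yields both the stated curvature/mean-curvature threshold and the rigidity conclusion.
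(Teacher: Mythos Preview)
The paper does not prove this proposition; it is quoted verbatim as a result of Hijazi--Montiel--Raulot \cite{H-M-R} and used only as motivation for why the exterior of a geodesic ball in $\mathbb{H}^n$ should be a non-generic domain. There is therefore no ``paper's own proof'' to compare against.

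That said, your outline is the correct strategy and is essentially the one carried out in \cite{H-M-R}: modified Killing connection $\widehat\nabla$, a Witten-type integrated Lichnerowicz formula with an inner boundary term, a chirality/MIT-type boundary condition on $\Sigma$ to make the problem elliptic and the boundary integrand have a sign, and B\"ar's eigenvalue estimate on the $2$-sphere to convert the spectral condition into the hypothesis $H\le 2\sqrt{4\pi/\operatorname{Area}(\Sigma)+1}$. The ``$+1$'' under the square root indeed comes from the $\tfrac{i}{2}X\cdot$ shift in the connection, and the rigidity argument via a full space of imaginary Killing spinors forcing constant curvature $-1$ is the standard one. Your sketch correctly identifies the genuine analytic difficulties (Fredholmness with the boundary condition, and the asymptotic identification of the flux at infinity with the energy-momentum functional), but as written it is an outline rather than a proof; filling in those steps is exactly the content of \cite{H-M-R}.
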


On the other hand, Almaraz-de Lima \cite{A-dL} had the following rigidity result for asymptotically hyperbolic manifolds with a non-compact boundary:

\begin{prop}[Almaraz-de Lima]
Let $(M, g, \Sigma)$ be an asymptotically hyperbolic spin manifold with boundary, with $R \ge -n(n - 1)$ and $H \ge 0$. Assume further that $g$ agrees with the reference hyperbolic metric $b$ in a neighborhood of infinity. Then $(M, g, \Sigma) = (\mathbb{H}^n_+,b,\p\mathbb{H}^n_+)$ isometrically.
\end{prop}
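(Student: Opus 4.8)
The plan is to run a Witten-type spinorial argument adapted to the asymptotically hyperbolic setting with a non-compact boundary, following the scheme used for the positive mass theorems of Almaraz--Barbosa--de Lima and Hijazi--Montiel--Raulot. Fix the reference hyperbolic metric $b$ on the half-space $\mathbb{H}^n_+$ (bounded by a totally geodesic hyperplane), and let $\hat\nabla_X = \nabla_X + \tfrac12 X\cdot$ denote the modified spin connection, whose parallel sections I will call $\hat\nabla$-parallel spinors; the model $(\mathbb{H}^n_+, b)$ carries a space of such spinors of the maximal dimension compatible with the totally geodesic boundary. On $(M,g)$ one forms the associated modified Dirac operator $\hat D = D + \tfrac n2$ (so that $\hat D^*\hat D = \hat\nabla^*\hat\nabla + \tfrac14(R+n(n-1))$ by the Lichnerowicz--WeitzenB\"ock formula) and imposes on $\Sigma$ the chirality (MIT bag--type) boundary condition, which is elliptic and self-adjoint for $\hat D$.

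Next I would solve, for each $\hat\nabla$-parallel spinor $\psi_0$ of the model, the boundary value problem $\hat D\psi = 0$ on $M$ with the chirality condition on $\Sigma$ and $\psi - \psi_0 \to 0$ at infinity at the appropriate rate; because $g \equiv b$ outside a compact set the asymptotic analysis is the model one, and existence and regularity follow from the Fredholm theory for the chirality boundary problem together with the invertibility of $\hat D$ for the hyperbolic model (the relevant strict spectral gap). Then apply the WeitzenB\"ock formula and integrate over $M$: the interior yields $\int_M\big(|\hat\nabla\psi|^2 + \tfrac14(R+n(n-1))|\psi|^2\big)$, the boundary at infinity contributes, after integration by parts, the energy--momentum functional of $g$ paired with $\psi_0$, and the inner boundary $\Sigma$ contributes a term that the chirality condition forces to be $\ge c\int_\Sigma H|\psi|^2$. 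The hypotheses $R \ge -n(n-1)$ and $H\ge 0$ make the interior and $\Sigma$ terms nonnegative, while $g\equiv b$ near infinity makes the mass term vanish, so all terms are zero.

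From the vanishing I conclude $\hat\nabla\psi \equiv 0$, that $R\equiv -n(n-1)$, and that the $\Sigma$ term vanishes, the last of which --- together with $H\ge 0$ and the chirality splitting --- forces $\Sigma$ to be totally geodesic. Running this for a full basis of model $\hat\nabla$-parallel spinors produces the maximal number of $\hat\nabla$-parallel spinors on $(M,g)$; the standard integrability argument then shows the curvature of $g$ is that of constant curvature $-1$, so $(M,g)$ is locally hyperbolic, and together with the totally geodesic boundary $\Sigma$ and the prescribed hyperbolic end, $(M,g,\Sigma)$ must be globally isometric to $(\mathbb{H}^n_+, b, \partial\mathbb{H}^n_+)$.

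The main obstacle is analytic rather than geometric: arranging the chirality boundary condition so that $\hat D$ with it is self-adjoint and Fredholm on the weighted Sobolev spaces adapted to the hyperbolic end, and producing a solution $\psi$ with asymptotics sharp enough that the boundary-at-infinity integral converges to exactly the energy--momentum functional --- so that the hypothesis $g\equiv b$ near infinity genuinely forces it to vanish. A secondary subtlety is the sign bookkeeping for the $\Sigma$-integral: one must check that the cross terms between the boundary integrand and the chirality projection have the favorable sign, which is exactly where the spin assumption and the precise boundary condition enter.
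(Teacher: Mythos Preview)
The paper does not prove this proposition: it is quoted verbatim as a result of Almaraz--de Lima \cite{A-dL} and used only to motivate why certain domains in $\mathbb{H}^n$ are non-generic. There is therefore no proof in the paper to compare your proposal against.

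That said, your outline is essentially the strategy of the original Almaraz--de Lima paper: a Witten-type spinorial argument with Killing (imaginary) spinors, a chirality/MIT-bag boundary condition on the non-compact boundary $\Sigma$, and the Lichnerowicz formula to force equality. The analytic obstacles you flag---self-adjointness and Fredholmness of $\hat D$ with the chirality condition on weighted spaces adapted to the hyperbolic end, and the convergence of the boundary-at-infinity integral to the mass functional---are precisely the technical core of \cite{A-dL}; the hypothesis $g\equiv b$ near infinity is there exactly to make that limit vanish. One caution: your sketch glosses over the fact that $\Sigma$ itself is non-compact and runs out to infinity, so the integration by parts must handle the interaction between the $\Sigma$-boundary term and the asymptotic boundary simultaneously (not as two independent pieces); this is handled in \cite{A-dL} but is worth making explicit if you write this up.
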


These results together give us a very good explanation of non-generic domains in $\mathbb{H}^{n}$.

\subsection{The Schwarzschild manifold}
In this subsection, we let $n\ge3$. Consider the Schwarzschild metric $g^{S}=\left(1+\frac{m}{2 r^{n-2}}\right)^{\frac{4}{n-2}} \delta$ on $\mathbb{R}^{n} \backslash\{\mathbf{0}\}$, where $r = |\vec{x}|$. When $m=0$, it becomes the Euclidean metric; when $m<0$, the metric is incomplete \cite[pp.~64-66]{Lee}. Therefore, we will just focus on the case when $m>0$.

We have already recalled that the umbilic hypersurfaces in $\mathbb{R}^n$ are spheres and hyperplanes, and that umbilic hypersurfaces remain umbilic after the conformal change \cite[p.~183 Exercise 6b]{dC}. This means the umbilic hypersurfaces in the Schwarzschild manifold are Euclidean spheres and hyperplanes.

\begin{lem}\label{sphere hyperplane mean}
Consider the Schwarzschild manifold $(\mathbb{R}^{n} \backslash\{\mathbf{0}\}, g^{S})$. Then the Euclidean spheres centered at $\mathbf{0}$ and the Euclidean hyperplanes containing $\mathbf{0}$ are the only possible boundaries of non-generic domains. In other words, the boundaries of non-generic domains in $(\mathbb{R}^{n} \backslash\{\mathbf{0}\}, g^{S})$ are symmetric with respect to $\mathbf{0}$.
\end{lem}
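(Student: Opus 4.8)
The plan is to combine the classification of umbilic hypersurfaces in $(\mathbb{R}^n\setminus\{\mathbf 0\}, g^S)$ recalled just above with the constant--mean--curvature constraint of Theorem \ref{const mean}. Since $g^S$ is conformal to $\delta$, the scalar curvature is automatically constant ($g^S$ is scalar flat) and umbilicity is a conformal invariant, so the boundary $\Sigma$ of a non-generic domain is necessarily a piece of a Euclidean sphere or of a Euclidean hyperplane. It therefore suffices to show that, among these, only the spheres centered at $\mathbf 0$ and the hyperplanes through $\mathbf 0$ can carry constant $g^S$--mean curvature.

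First I would write $g^S = e^{2\phi}\delta$ with $\phi = \tfrac{2}{n-2}\log V$ and $V(r) = 1 + \tfrac{m}{2r^{n-2}}$, record $\nabla_\delta V = -\tfrac{m(n-2)}{2}\,r^{-n}\vec x$, and feed this into the conformal transformation law (\ref{conformal mean}), $H = e^{-\phi}\bigl(H_0 + (n-1)\langle\nabla\phi,\nu_0\rangle_\delta\bigr)$, where $H_0$ and $\nu_0$ are the Euclidean mean curvature and unit normal. For a hyperplane $P$ one has $H_0 = 0$ and $\langle\vec x,\nu_0\rangle \equiv d$, the constant signed Euclidean distance of $P$ from $\mathbf 0$, and the formula collapses to $H = -(n-1)\,md\,\bigl(r^{n-2}+\tfrac m2\bigr)^{-n/(n-2)}$; since $r$ is non-constant on any hyperplane and $m>0$, this is constant precisely when $d = 0$. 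For a Euclidean sphere $S_\rho(p)$ of radius $\rho$ about $p$, parametrizing $\vec x = p + \rho\,\omega$ with $\omega\in\mathbb S^{n-1}$ and setting $s = |\vec x|^2 = |p|^2+\rho^2+2\rho\langle p,\omega\rangle$, a short computation in which the terms involving $\langle p,\omega\rangle$ cancel between $Vr^n$ and $\langle\vec x,\vec x-p\rangle$ yields the closed form
$$
H \;=\; \frac{n-1}{\rho}\cdot\frac{s^{n/2} + \tfrac m2\bigl(|p|^2-\rho^2\bigr)}{\bigl(s^{(n-2)/2}+\tfrac m2\bigr)^{n/(n-2)}}.
$$

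It then remains to see that this is non-constant on $S_\rho(p)$ whenever $p\ne\mathbf 0$. If $p\ne\mathbf 0$ the quantity $w := s^{(n-2)/2}$ runs over a genuine interval (the sphere meets concentric coordinate spheres of different radii, and, being a boundary component of a domain in $\mathbb R^n\setminus\{\mathbf 0\}$, it cannot pass through $\mathbf 0$, so $|p|\ne\rho$ and $w>0$ throughout), so it is enough to show that $w\mapsto (w^q+b)/(w+a)^q$ with $q=\tfrac n{n-2}>1$ and $a=\tfrac m2>0$ is not constant on an interval. If it equaled a constant $c$, then by analyticity $w^q+b = c(w+a)^q$ for all $w>0$; letting $w\to\infty$ forces $c=1$, whence $(w+a)^q - w^q \equiv b$, which is impossible since its derivative $q\bigl((w+a)^{q-1}-w^{q-1}\bigr)$ never vanishes. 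Hence $p=\mathbf 0$. The computation is routine; the only mildly delicate point is checking the cancellation that produces the displayed formula (and that $w$ genuinely varies), and one could alternatively run the analogous argument off the vanishing of $\operatorname{Ric}_{i\nu}$ on $\Sigma$ (the remark after Theorem \ref{const mean}) together with the explicit Ricci tensor of $g^S$, though the mean--curvature route is shorter and self-contained.
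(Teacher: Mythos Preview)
Your proof is correct and follows the same route as the paper: reduce to Euclidean spheres and hyperplanes via conformal invariance of umbilicity, then use the conformal mean--curvature formula to test for constancy. In fact your treatment of the off--center sphere is more complete than the paper's, which simply asserts without details that ``the mean curvature depends on both the radius $\rho$ and the coordinate $x_n$ of the point, thus is not constant''; your closed--form expression $H=\frac{n-1}{\rho}\cdot\frac{s^{n/2}+\frac m2(|p|^2-\rho^2)}{(s^{(n-2)/2}+\frac m2)^{n/(n-2)}}$ together with the analyticity argument makes this rigorous.
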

\begin{proof}
Let us denote $\alpha = \frac{m}{2r^{n-2}}$.

Suppose $\vec{e}$ is any vector with Euclidean norm 1. Then we may parametrize the Euclidean sphere centered at $\mathbf{0}$ as $\{\vec{x} = r\vec{e}\}$. Denote the outward unit normal at $\vec{x}$ as $\nu = c\vec{e}$, then
$$1 = g^S(\nu, \nu) = \left(1+\alpha\right)^{\frac{4}{n-2}}\delta(c\vec{e}, c\vec{e}) = c^2\left(1+\alpha\right)^{\frac{4}{n-2}},$$
so 
$$\nu = \left(1+\alpha\right)^{-\frac{2}{n-2}}\vec{e}.$$
Then by (\ref{conformal mean}) the mean curvature with respect to this normal is 
$$
\begin{aligned}
H & = \left(1+\alpha\right)^{-\frac{2}{n-2}}\left(H_0 + (n-1)\left<\nabla\left(\frac{2}{n-2}\operatorname{ln}(1+\alpha)\right), \vec{e}\right>_\delta\right)\\
	& = \left(1+\alpha\right)^{-\frac{2}{n-2}}\left(\frac{n-1}{r} - (n-1)\left<\frac{2\alpha\vec{x}}{r^2(1+\alpha)}, \vec{e}\right>_\delta\right)\\
	& = \left(1+\alpha\right)^{-\frac{2}{n-2}}\left(\frac{n-1}{r} - \frac{2(n-1)\alpha}{r(1+\alpha)}\right)\\
	& = (n-1)\frac{1-\alpha}{r\left(1+\alpha\right)^{1+\frac{2}{n-2}}},
\end{aligned}
$$
which is constant for fixed $r$.

On the other hand, the Euclidean spheres not centered at $\mathbf{0}$ are not boundaries of non-generic domains. This is mainly because the function $r$ is no longer a constant on the sphere, thus $H$ is not constant. More precisely, we may assume the sphere is centered at $\vec{a}=(0, \cdots, 0, 1)$ with radius $\rho$. Following the same calculation as above, we will see that the mean curvature depends on both the radius $\rho$ and the coordinate $x_n$ of the point, thus is not constant on the sphere.

Without loss of generality, we may parametrize a Euclidean hyperplane as $\{x_n = c\}$, then the outward unit normal is 
$$\nu = \left(1+\alpha\right)^{-\frac{2}{n-2}}\frac{\p}{\p x_n},$$
and by (\ref{conformal mean}) the mean curvature is 
$$
\begin{aligned}
H & = \left(1+\alpha\right)^{-\frac{2}{n-2}}\left(H_0 + (n-1)\left<\nabla\left(\frac{2}{n-2}\operatorname{ln}(1+\alpha)\right), \frac{\p}{\p x_n}\right>_\delta\right)\\
	& = -\left(1+\alpha\right)^{-\frac{2}{n-2}}\left( (n-1)\frac{2\alpha x_n}{r^2(1+\alpha)}\right)\\
	& = -(n-1)\frac{2\alpha}{r^2(1+\alpha)^{1+\frac{2}{n-2}}}c.
\end{aligned}
$$
Since $r$ is not a constant on the hyperplane, $\frac{2\alpha}{r^2(1+\alpha)^{1+\frac{2}{n-2}}}$ is not constant. Thus $H$ is not constant unless $c=0$. Finally, let us note the Euclidean hyperplanes containing $\mathbf{0}$ are complete and have two ends in $(\mathbb{R}^{n} \backslash\{\mathbf{0}\}, g^{S})$.
\end{proof}

As mentioned in \cite{C}, the function $u_0 = \frac{1-\frac{m}{2 r}}{1+\frac{m}{2 r}} \in \operatorname{ker} L_{g^S}^{*}$ is a possible static potential in $3$-dimensional Schwarzschild manifold. Analogously, the function $u = \frac{1-\frac{m}{2 r^{n-2}}}{1+\frac{m}{2 r^{n-2}}} \in \operatorname{ker} L_{g^S}^{*}$ is a possible static potential in $n$-dimensional Schwarzschild manifold. (This can also be found using the method introduced in \cite{B-M}.) Indeed, up to scaling, it is the only possible static potential in connected open domains (with our thanks to J. Corvino for bringing this to our attention). The main idea is to rewrite the static equation (\ref{static eqn}) in spherical coordinates and conclude that any possible static potentials must be radial. Now let us verify whether the function satisfies the boundary condition. 

\begin{prop}\label{Schwarzschild sphere}
Consider the Schwarzschild manifold $(\mathbb{R}^{n} \backslash\{\mathbf{0}\}, g^{S})$. Denote $\Sigma$ as the Euclidean sphere centered at $\mathbf{0}$ with radius $r_{\pm} = \left(\frac{(n-1) \pm \sqrt{n^2 - 2n}}{2}m\right)^{\frac{1}{n-2}}$. Then either region with boundary $\Sigma$ is a non-generic domain in $(\mathbb{R}^{n} \backslash\{\mathbf{0}\}, g^{S})$. Moreover, 
$$\operatorname{ker}\Phi^* = \operatorname{span}\left\{\frac{1-\frac{m}{2 r^{n-2}}}{1+\frac{m}{2 r^{n-2}}}\right\}.$$
\end{prop}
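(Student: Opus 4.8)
The plan is to verify directly that the function $u=\frac{1-\frac{m}{2r^{n-2}}}{1+\frac{m}{2r^{n-2}}}$ satisfies the boundary equation of (\ref{non-generic eqn}) on a Euclidean sphere $\Sigma=\{r=r_0\}$ centered at $\mathbf 0$, determine for which radii $r_0$ this happens, and then invoke the uniqueness of possible static potentials to pin down $\operatorname{ker}\Phi^*$. Set $\alpha=\frac{m}{2r^{n-2}}$, so that $u=\frac{1-\alpha}{1+\alpha}$ is a radial — hence smooth on $\overline\Omega$ for either choice of region — and, by the discussion preceding the proposition, up to scaling the only possible static potential on a connected open domain in $(\mathbb{R}^n\setminus\{\mathbf 0\},g^S)$. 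Since $\Sigma$ is a round Euclidean sphere it is umbilic in $\delta$, hence umbilic in $g^S$ by conformal invariance, so $h=\frac{H}{n-1}\hat g$ and the tensorial condition $u_\nu\hat g=uh$ reduces to the scalar identity $u_\nu=\frac{H}{n-1}u$ on $\Sigma$.

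Next I would substitute the data already recorded in Lemma \ref{sphere hyperplane mean}: for the inner region the outward unit normal is $\nu=(1+\alpha)^{-2/(n-2)}\vec e$ with $\vec e=\vec x/r$, and $H=(n-1)\frac{1-\alpha}{r(1+\alpha)^{1+2/(n-2)}}$. Using $\alpha'=-(n-2)\alpha/r$ one computes $\frac{du}{dr}=\frac{2(n-2)\alpha}{r(1+\alpha)^2}$, so $u_\nu=(1+\alpha)^{-2/(n-2)}\frac{2(n-2)\alpha}{r(1+\alpha)^2}$. Plugging into $u_\nu=\frac{H}{n-1}u$ and cancelling the common factor $\frac{1}{r}(1+\alpha)^{-2-2/(n-2)}$, the boundary condition collapses to $2(n-2)\alpha=(1-\alpha)^2$, i.e. $\alpha^2-2(n-1)\alpha+1=0$. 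Its roots are $\alpha=(n-1)\pm\sqrt{n^2-2n}$, which for $n\ge 3$ are real and positive (indeed $(n-1)^2>n^2-2n$), and rewriting $\alpha=\frac{m}{2r^{n-2}}$ and rationalizing gives exactly $r=r_\pm$ as in the statement; one may note $\alpha_+\alpha_-=1$, reflecting the neck isometry of the Schwarzschild manifold. For the exterior region the outward normal and the mean curvature both change sign, so the same identity results and $u\in\operatorname{ker}\Phi^*$ there as well.

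Finally, since on any connected open domain in the Schwarzschild manifold the only possible static potentials are multiples of $u$, and both $\{0<r<r_0\}$ and $\{r>r_0\}$ are connected, we conclude $\operatorname{ker}\Phi^*=\operatorname{span}\{u\}$ for $r_0=r_\pm$. The proof is entirely computational and I do not expect a genuine obstacle; the only points needing mild care are carrying the conformal factor through $\nu$ and $H$ — handled once and for all in Lemma \ref{sphere hyperplane mean} — and checking that both roots of the quadratic in $\alpha$ are positive so that they correspond to actual radii, which holds precisely because $n\ge 3$.
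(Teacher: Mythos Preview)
Your proposal is correct and follows essentially the same route as the paper: set $\alpha=\frac{m}{2r^{n-2}}$, use Lemma \ref{sphere hyperplane mean} for $\nu$ and $H$, compute $u_\nu-\frac{H}{n-1}u$ to obtain the quadratic $\alpha^2-2(n-1)\alpha+1=0$, and read off $r_\pm$. Your presentation is in fact slightly more complete than the paper's, since you explicitly justify why the tensorial boundary equation reduces to the scalar one via umbilicity, why the exterior region works by the simultaneous sign change in $\nu$ and $H$, and why $\operatorname{ker}\Phi^*$ is exactly one-dimensional via the uniqueness of possible static potentials stated before the proposition.
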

\begin{proof}
Denote $\alpha = \frac{m}{2r^{n-2}}$. Suppose $\vec{e}$ is any vector with Euclidean norm 1. Then we may parametrize the Euclidean sphere centered at $\mathbf{0}$ as $\{\vec{x} = r\vec{e}\}$. By Lemma \ref{sphere hyperplane mean}, the mean curvature is
\begin{equation}\label{mean curvature}
H = (n-1)\left(1+\alpha\right)^{-\frac{2}{n-2}}r^{-1}u.
\end{equation}
So
$$
\begin{aligned}
u_\nu-\frac{H}{n-1} u & = \left(1+\alpha\right)^{-\frac{2}{n-2}}\vec{e}\left(\frac{1-\alpha}{1+\alpha}\right) - \left(1+\alpha\right)^{-\frac{2}{n-2}}r^{-1}u^2\\
	& = \left(1+\alpha\right)^{-\frac{2}{n-2}}\left<\frac{2(n-2)\alpha\vec{x}}{r^2(1+\alpha)^2}, \vec{e}\right>_\delta - \frac{(1-\alpha)^2}{r(1+\alpha)^{2+\frac{2}{n-2}}}\\
	& = \frac{2(n-2)\alpha}{r(1+\alpha)^{2+\frac{2}{n-2}}} - \frac{\alpha^2 - 2\alpha + 1}{r(1+\alpha)^{2+\frac{2}{n-2}}}\\
	& = \frac{-\alpha^2 + 2(n-1)\alpha - 1}{r(1+\alpha)^{2+\frac{2}{n-2}}}.
\end{aligned}
$$
Thus $u_\nu-\frac{H}{n-1} u = 0$ if and only if
$$\alpha^2-2(n-1)\alpha+1 = 0,$$ 
that is,
$$\alpha = \frac{m}{2r^{n-2}} = (n-1) \pm \sqrt{n^2 - 2n}.$$
This means the region with boundary $\Sigma$ is a non-generic domain in $(\mathbb{R}^{n} \backslash\{\mathbf{0}\}, g^S)$.
\end{proof}

As a non-generic domain, one would expect it to have some special properties in the Schwarzschild manifold. It turns out that neither $r_{\pm}$ in Proposition \ref{Schwarzschild sphere} would give us the horizon of the Schwarzschild manifold, as the mean curvature $H$ cannot be $0$ (otherwise we would get $u \equiv 0$ from (\ref{mean curvature})). However, in the next proposition we will see that the two radii $r_{\pm}$ do have special geometric and physical meanings.

\begin{prop}
    The above two radii $r_{\pm}$ are the critical points of the mean curvature $H=H(r)$ on the Euclidean sphere (centered at $\mathbf{0}$) with radius $r$.
\end{prop}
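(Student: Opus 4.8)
The plan is to differentiate the explicit formula for the mean curvature obtained in Lemma~\ref{sphere hyperplane mean} and to observe that the vanishing of $H'(r)$ is governed by exactly the same quadratic in $\alpha = \frac{m}{2r^{n-2}}$ that appeared in the proof of Proposition~\ref{Schwarzschild sphere}. So the non-generic spheres and the critical spheres of $H$ coincide not by accident but because the boundary condition $u_\nu = \frac{H}{n-1}u$, written out via \eqref{mean curvature}, reduces to precisely this quadratic.

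First I would recall from Lemma~\ref{sphere hyperplane mean} that on the Euclidean sphere of radius $r$ centered at $\mathbf{0}$ one has
$$H(r) = (n-1)\,\frac{1-\alpha}{r\,(1+\alpha)^{1+\frac{2}{n-2}}}, \qquad \alpha = \frac{m}{2r^{n-2}},$$
together with the elementary identity $\dfrac{d\alpha}{dr} = -(n-2)\,\dfrac{\alpha}{r}$. Then I would compute $\dfrac{dH}{dr}$ by the product and chain rules, writing $H = (n-1)\,r^{-1}(1-\alpha)(1+\alpha)^{-1-\frac{2}{n-2}}$ and pulling out the common factor $(n-1)\,r^{-2}(1+\alpha)^{-2-\frac{2}{n-2}}$. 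To keep the algebra clean it is convenient to use $1+\frac{2}{n-2}=\frac{n}{n-2}$ and $2+\frac{2}{n-2}=\frac{2(n-1)}{n-2}$.

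The key step is the simplification of the bracketed term that remains after factoring: it equals $-(1-\alpha)(1+\alpha) - 2\alpha\bigl(\alpha-(n-1)\bigr)$, which upon expanding collapses to $-\bigl(\alpha^2 - 2(n-1)\alpha + 1\bigr)$. Hence
$$\frac{dH}{dr} = -\,(n-1)\,\frac{\alpha^2 - 2(n-1)\alpha + 1}{r^2\,(1+\alpha)^{2+\frac{2}{n-2}}}.$$
Since the prefactor never vanishes for $r>0$ and $m>0$, we conclude $H'(r)=0$ if and only if $\alpha^2 - 2(n-1)\alpha + 1 = 0$, i.e. $\alpha = (n-1)\pm\sqrt{n^2-2n}$. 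By the computation in the proof of Proposition~\ref{Schwarzschild sphere}, this is exactly the condition $\alpha = \frac{m}{2r^{n-2}} = (n-1)\pm\sqrt{n^2-2n}$ that picks out the spheres of radius $r_{\pm}$, so these are precisely the critical points of $H(r)$, as claimed.

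There is no genuine conceptual obstacle here; the only thing requiring care is the bookkeeping of the fractional exponent $1+\frac{2}{n-2}$ when differentiating, and the sign when collecting terms. It is worth noting explicitly in the write-up that the two roots satisfy $\bigl((n-1)+\sqrt{n^2-2n}\bigr)\bigl((n-1)-\sqrt{n^2-2n}\bigr)=1$, which is why the two values of $\alpha$ produced above correspond exactly to the two radii $r_{\pm}$ (with the $\pm$ labels interchanged), and which also explains the reciprocity between the interior and exterior non-generic regions found in Proposition~\ref{Schwarzschild sphere}.
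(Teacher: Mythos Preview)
Your proof is correct and follows essentially the same approach as the paper: both differentiate the explicit formula for $H(r)$ from Lemma~\ref{sphere hyperplane mean} and reduce the vanishing of $H'(r)$ to the quadratic $\alpha^2-2(n-1)\alpha+1=0$. The only cosmetic difference is that the paper organizes the derivative as $H' = H\cdot\frac{-\alpha^2+2(n-1)\alpha-1}{r(1-\alpha)(1+\alpha)}$ by logarithmic differentiation, whereas you factor out $(n-1)r^{-2}(1+\alpha)^{-2-\frac{2}{n-2}}$ directly; your version has the minor advantage of avoiding the apparent $0/0$ at the horizon $\alpha=1$.
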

\begin{proof}
As we can see from (\ref{mean curvature}), the mean curvature on the Euclidean sphere (centered at $\mathbf{0}$) with radius $r$ is a function depending only on $r$, that is, $H=H(r)$. 

Let us calculate its derivative $H'$, and denote $\alpha = \frac{m}{2r^{n-2}}$.
$$
\begin{aligned}
    H' & = mr^{1-n}(1+\alpha)^{-1}H - r^{-1}H + \frac{(n-2)mr^{1-n}}{(1-\alpha)(1+\alpha)}H\\
    & = H\frac{1}{r(1-\alpha)(1+\alpha)}\left[2\alpha(1-\alpha)-(1-\alpha)(1+\alpha)+(n-2)2\alpha\right]\\
    & = H\frac{1}{r(1-\alpha)(1+\alpha)}\left(-\alpha^2+2(n-1)\alpha-1\right).
\end{aligned}
$$
Note that $H\ne0$, so $H' = 0$ if and only if 
$$\alpha^2-2(n-1)\alpha+1 = 0.$$ 
This means, $H' = 0$ if and only if $r=r_{\pm}$.
\end{proof}

From (\ref{mean curvature}) we can see, when $r\rightarrow 0$ or $r\rightarrow \infty$, $H\rightarrow 0$. And $H=0$ on the horizon ($r^{n-2}=\frac{m}2$). Thus, 
\begin{itemize}
\item[(i)] $r_{-}$ (the one inside the horizon) gives us the minimal mean curvature $H_{\text{min}}$; 
\item[(ii)] $r_{+}$ (the one outside the horizon) gives us the maximal mean curvature $H_{\text{max}}$. 
\end{itemize}
Moreover, $(r_{-}\cdot r_{+})^{n-2} = \frac{m^2}{4}$; in other words, the two spheres are symmetric with respect to the horizon.

\begin{figure}
\begin{center}
\includegraphics[scale = 0.05]{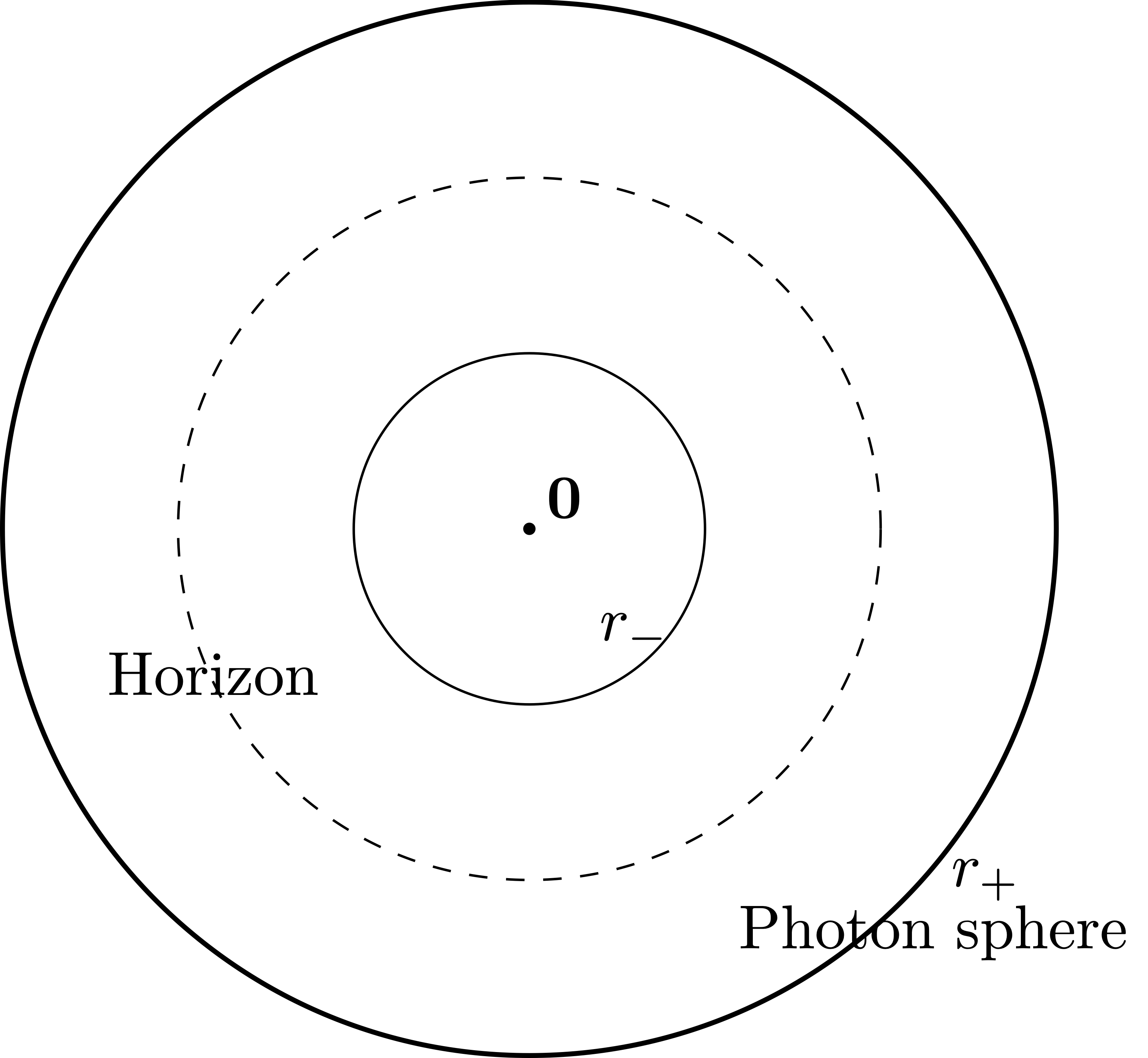}
\caption{The Schwarzschild photon sphere and the horizon}
\label{fig: photon sphere}
\end{center}
\end{figure}

In case $n= 3$, physicists call the one outside the horizon with radius $r_{+} = \frac{2+ \sqrt{3}}{2}m$ the Schwarzschild photon sphere \cite{Cederbaum, Claudel-V-E}, see Fig. \ref{fig: photon sphere}. Roughly speaking, it behaves like a borderline in the following sense:
\begin{itemize}
\item[(i)] Any endless geodesic starting at some point outside the photon sphere and initially directed outwards will continue outwards and escape to infinity. 
\item[(ii)] Any endless geodesic starting at some point between the horizon and the photon sphere and initially directed inwards will continue inwards and fall into the black hole. 
\item[(iii)] Any geodesic starting at some point on the photon sphere and initially tangent to the photon sphere will remain in the photon sphere. 
\end{itemize}

The Schwarzschild photon sphere thus models (an embedded submanifold ruled by) photons spiraling around the central black hole “at a fixed distance”. The Schwarzschild photon sphere and the notion of trapped null geodesics in general are crucially relevant for questions of dynamical stability in the context of the Einstein equations. Moreover, photon spheres are related to the existence of relativistic images in the context of gravitational lensing \cite{V-E}. On the other hand, Cederbaum and Galloway \cite{Cederbaum, Cederbaum-G} found rigidity results on the exterior region of the photon sphere, which happens to be a non-generic domain in our setting. Cruz and Nunes \cite{C-N} also had some rigidity results on the region between the horizon and the photon sphere.

\begin{prop}
Consider the Schwarzschild manifold $(\mathbb{R}^{n} \backslash\{\mathbf{0}\}, g^{S})$. Denote $\Sigma$ as a Euclidean hyperplane containing $\mathbf{0}$. Then either region with boundary $\Sigma$ is a non-generic domain in $(\mathbb{R}^{n} \backslash\{\mathbf{0}\}, g^{S})$. Moreover, 
$$\operatorname{ker}\Phi^* = \operatorname{span}\left\{\frac{1-\frac{m}{2 r^{n-2}}}{1+\frac{m}{2 r^{n-2}}}\right\}.$$
\end{prop}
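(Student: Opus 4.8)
The plan is to mimic the proof of Proposition~\ref{Schwarzschild sphere}. Since, up to scaling, the only possible static potential on a connected open domain in the Schwarzschild manifold is the radial function $u = \frac{1-\frac{m}{2r^{n-2}}}{1+\frac{m}{2r^{n-2}}}$, it suffices to verify that $u$ satisfies the boundary equation in (\ref{non-generic eqn}) on $\Sigma$; the assertion about $\operatorname{ker}\Phi^*$ then follows at once.

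The key geometric input is that a Euclidean hyperplane $\Sigma$ through $\mathbf{0}$ is totally geodesic in $(\mathbb{R}^{n}\backslash\{\mathbf{0}\}, g^S)$. Indeed, $\Sigma$ is umbilic (umbilicity is preserved under a conformal change), and by Lemma~\ref{sphere hyperplane mean}, parametrizing $\Sigma = \{x_n = c\}$ with $\alpha = \frac{m}{2r^{n-2}}$, the mean curvature is $H = -(n-1)\frac{2\alpha}{r^2(1+\alpha)^{1+\frac{2}{n-2}}}\,c$, which vanishes precisely when $c=0$; an umbilic hypersurface with $H\equiv 0$ satisfies $h\equiv 0$. (Equivalently, reflection across $\Sigma$ is an isometry of $g^S$ because $g^S$ depends only on $r=|\vec{x}|$, so the fixed-point set $\Sigma$ is totally geodesic.) Hence the boundary equation $u_\nu\hat{g} = u h$ reduces to $u_\nu = 0$ on $\Sigma$, and each of the two regions $\{x_n > 0\}$, $\{x_n < 0\}$ is a connected, complete domain with boundary $\Sigma$ (as recorded at the end of Lemma~\ref{sphere hyperplane mean}).

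It remains to check $u_\nu = 0$. Take $\Sigma = \{x_n = 0\}$, with outward unit normal $\nu = (1+\alpha)^{-\frac{2}{n-2}}\frac{\p}{\p x_n}$. Since $u = u(r)$ is radial, $\nabla^{g^S} u$ is proportional to the position vector $\vec{x}$; at any point of $\Sigma$ we have $x_n = 0$, so $\vec{x}$ is tangent to $\Sigma$ and therefore $u_\nu = \langle \nabla^{g^S} u, \nu\rangle = 0$. Thus $u$ is a static potential on either region, so both are non-generic domains, and since $u$ is the unique possible static potential up to scaling, $\operatorname{ker}\Phi^* = \operatorname{span}\{u\}$. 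The only step needing any care is the totally geodesic claim for $\Sigma$; once that is in hand, the boundary check is immediate from the radial symmetry of $u$, so I do not anticipate a genuine obstacle.
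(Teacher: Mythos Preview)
Your proposal is correct and follows essentially the same approach as the paper: both use Lemma~\ref{sphere hyperplane mean} to obtain $H=0$ on $\Sigma$ (hence $h\equiv 0$ by umbilicity), reducing the boundary equation to $u_\nu=0$, and then verify this for the radial potential $u$. The only difference is cosmetic: the paper computes $\partial_{x_n}u$ explicitly and observes the resulting factor of $x_n$, whereas you argue more geometrically that the gradient of a radial function is proportional to $\vec{x}$ and hence tangent to $\Sigma$ along $\{x_n=0\}$.
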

\begin{proof}
Denote $\alpha = \frac{m}{2r^{n-2}}$. Without loss of generality, we may parametrize $\Sigma$ as $\{x_n = 0\}$, then by Lemma \ref{sphere hyperplane mean} the mean curvature on $\Sigma$ is $H=0$. So
$$
u_\nu-\frac{H}{n-1} u = \left(1+\alpha\right)^{-\frac{2}{n-2}}\frac{\p}{\p x_n}\left(\frac{1-\alpha}{1+\alpha}\right) + 0 = \left(1+\alpha\right)^{-\frac{2}{n-2}}\frac{2(n-2)\alpha}{r^2(1+\alpha)^2}x_n = 0.
$$
\end{proof}

As a summary, we have the following classification result:
\begin{thm}[Classification of simple non-generic domains in the Schwarzschild manifold]
There are two types of boundaries of non-generic domains in the Schwarzschild manifold $(\mathbb{R}^{n} \backslash\{\mathbf{0}\}, g^{S})$:\\
(i) the Euclidean spheres centered at $\mathbf{0}$ with radii $r_{\pm}$, (ii) the Euclidean hyperplanes containing $\mathbf{0}$.
\end{thm}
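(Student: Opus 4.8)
The plan is to derive the classification by combining the reduction to candidate boundaries already carried out in Lemma~\ref{sphere hyperplane mean} with the uniqueness (up to scaling) of the possible static potential and the two boundary-condition computations of Proposition~\ref{Schwarzschild sphere} and the final proposition. Concretely, Lemma~\ref{sphere hyperplane mean}, which already incorporates the constant-mean-curvature requirement coming from Theorem~\ref{const mean}, tells us that the boundary $\Sigma$ of any non-generic domain in $(\R^n\setminus\{\mathbf{0}\}, g^S)$ must be either a Euclidean sphere centered at $\mathbf{0}$ or a Euclidean hyperplane containing $\mathbf{0}$. It then remains to decide, within each family, which members actually bound a non-generic domain, and to record that in each admissible case both the interior and the exterior region are non-generic.

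The crucial ingredient is that on any connected open domain in $(\R^n\setminus\{\mathbf{0}\}, g^S)$ the space of possible static potentials is at most one-dimensional, spanned by $u = \frac{1-\frac{m}{2r^{n-2}}}{1+\frac{m}{2r^{n-2}}}$. I would prove this using the rotational symmetry of $g^S$: writing $g^S = ds^2 + w(s)^2 g_{\mathbb{S}^{n-1}}$ in the geodesic radial coordinate $s$ and decomposing a possible static potential into spherical harmonics, the linear system $L^*_{g^S} v = 0$ decouples mode by mode. For the radial mode, the $ss$-component of $L^*_{g^S} v = 0$ is the first-order relation $w'' v = w' v'$, which forces the radial part to be a multiple of $w'$ (which, up to scaling, is exactly the displayed $u$), so there is no second radial solution; and for each higher mode the coupled system of tensor components is overdetermined and admits only the trivial solution precisely because $g^S$ is not a space form. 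This is the step attributed to J.~Corvino, and it is the place where care is required.

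Granting this, for a Euclidean sphere centered at $\mathbf{0}$ of radius $r$, Proposition~\ref{Schwarzschild sphere} gives $u_\nu - \frac{H}{n-1}u = \frac{-\alpha^2 + 2(n-1)\alpha - 1}{r(1+\alpha)^{2+\frac{2}{n-2}}}$ with $\alpha = \frac{m}{2r^{n-2}}$, so $u \in \operatorname{ker}\Phi^*$ exactly when $\alpha^2 - 2(n-1)\alpha + 1 = 0$, i.e. exactly when $r = r_\pm$; for any other radius the right-hand side is nonzero, so no nonzero multiple of $u$ lies in $\operatorname{ker}\Phi^*$, and by the uniqueness above $\operatorname{ker}\Phi^* = 0$, so the domain is not non-generic. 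For a Euclidean hyperplane containing $\mathbf{0}$, the final proposition shows $H = 0$ and $u_\nu = 0$ on $\Sigma$, so $u \in \operatorname{ker}\Phi^*$ and both regions bounded by $\Sigma$ are non-generic. Assembling the two families yields exactly types (i) and (ii) in the statement. The main obstacle is the uniqueness of the possible static potential: once it is in hand, the remainder is a bookkeeping of Lemma~\ref{sphere hyperplane mean}, Proposition~\ref{Schwarzschild sphere}, and the final proposition.
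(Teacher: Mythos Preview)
Your proposal is correct and follows essentially the same route as the paper: reduce the candidate boundaries via Lemma~\ref{sphere hyperplane mean}, invoke the one-dimensionality of $\ker L^*_{g^S}$ (spanned by $u$), and then test the boundary condition using the computations of Proposition~\ref{Schwarzschild sphere} and the final proposition to isolate $r=r_\pm$ among spheres and confirm all hyperplanes through $\mathbf{0}$. The only difference is that you supply a more detailed sketch of the uniqueness step (spherical-harmonic decomposition and the first-order relation $w''v=w'v'$ for the radial mode), whereas the paper merely records the idea---rewrite the static equation in spherical coordinates to force radiality---and attributes it to Corvino; your version is a faithful expansion of that same argument rather than a different one.
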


\subsection{Non-generic domains with multiple boundary components}\label{non-simple}
For the above examples in space forms and in the Schwarzschild manifold, we are just considering simple non-generic domains with only one boundary component. If a non-generic domain has multiple boundary components, then it can be viewed as the intersection of simple non-generic domains, as long as their boundaries do not intersect with each other. In this case, the space of static potentials is the intersection of those corresponding to each simple non-generic domain. But of course we will need to make sure the eventual space of static potentials has dimension at least $1$.

\begin{exam}
Consider the standard metric $g_{\mathbb{S}^n}$ on the unit sphere $\mathbb{S}^n$, and let $(x_1, \cdots, x_{n+1})$ denote the coordinates of $\mathbb{R}^{n+1}$ restricted to $\mathbb{S}^n$. For $\epsilon>0$ small, define $\Sigma_j\subset\mathbb{S}^n$ by, where $j=1,\cdots, n$,
$$
\Sigma_j = \{\mathbf{x}\in\mathbb{S}^n: x_{j} = 1-\epsilon\}.
$$
Then $\Sigma = \bigcup\limits_{j=1}^n \Sigma_j$ is a hypersurface with $n$ components. Denote the connected domain with boundary $\Sigma$ as $\Omega$. Then by Theorem \ref{sphere} we know $(\Omega, \Sigma)$ is a non-generic domain in $\mathbb{S}^n$, and moreover $\operatorname{ker}\Phi^* = \operatorname{span}\{x_{n+1}\}.$
\end{exam}

\section*{Acknowledgement}

I would like to thank my PhD advisor Rick Schoen for his wisdom and patience during the preparation of my dissertation. I would like to thank Justin Corvino for valuable discussions on Schwarzschild manifolds. I would like to thank the anonymous referees for carefully proofreading the article and for suggesting a number of changes which significantly contributed to the improvement of this final version.

\end{document}